\documentclass[11pt]{article}

\usepackage{amsmath}
\usepackage{amsfonts}
\usepackage{graphicx}
\usepackage{setspace}
\usepackage{amsmath}
\usepackage{amssymb}
\usepackage{latexsym}
\usepackage{amsmath, amsfonts,amssymb, amsthm, euscript,makeidx,color,mathrsfs}

\usepackage[numbers,sort&compress]{natbib}


\oddsidemargin  = 0pt \evensidemargin = 0pt \marginparwidth = 1in
\marginparsep   = 0pt \leftmargin     = 1.25in \topmargin =0pt
\headheight     = 0pt \headsep        = 0pt \topskip =0pt
\footskip       =0.25in \textheight     = 9in \textwidth  = 6.5in

\def\sqr#1#2{{\vcenter{\vbox{\hrule height.#2pt
              \hbox{\vrule width.#2pt height#1pt \kern#1pt \vrule width.#2pt}
              \hrule height.#2pt}}}}
%

%

\usepackage{hyperref}
\hypersetup{
    colorlinks=true,
    linkcolor=blue,
    urlcolor=cyan,
    citecolor=cyan
}
\usepackage{caption}


\RequirePackage[capitalize,nameinlink]{cleveref}

\crefname{section}{section}{sections}
\crefname{subsection}{subsection}{subsections}
\Crefname{section}{Section}{Sections}
\Crefname{subsection}{Subsection}{Subsections}

\crefname{condition}{Condition}{Conditions}

\Crefname{figure}{Figure}{Figures}

\crefformat{equation}{\textup{#2(#1)#3}}
\crefrangeformat{equation}{\textup{#3(#1)#4--#5(#2)#6}}
\crefmultiformat{equation}{\textup{#2(#1)#3}}{ and \textup{#2(#1)#3}}
{, \textup{#2(#1)#3}}{ and \textup{#2(#1)#3}}
\crefrangemultiformat{equation}{\textup{#3(#1)#4--#5(#2)#6}}%
{ and \textup{#3(#1)#4--#5(#2)#6}}{, \textup{#3(#1)#4--#5(#2)#6}}{ and \textup{#3(#1)#4--#5(#2)#6}}

\Crefformat{equation}{#2Equation~\textup{(#1)}#3}
\Crefrangeformat{equation}{Equations~\textup{#3(#1)#4--#5(#2)#6}}
\Crefmultiformat{equation}{Equations~\textup{#2(#1)#3}}{ and \textup{#2(#1)#3}}
{, \textup{#2(#1)#3}}{ and \textup{#2(#1)#3}}
\Crefrangemultiformat{equation}{Equations~\textup{#3(#1)#4--#5(#2)#6}}%
{ and \textup{#3(#1)#4--#5(#2)#6}}{, \textup{#3(#1)#4--#5(#2)#6}}{ and \textup{#3(#1)#4--#5(#2)#6}}

\crefdefaultlabelformat{#2\textup{#1}#3}


\newtheorem {theorem}{Theorem}[section]
\newtheorem {lemma}[theorem]{{\bf Lemma}}
\newtheorem {corollary}[theorem]{{\bf Corollary}}
\newtheorem {proposition}[theorem]{{\bf Proposition}}
\theoremstyle{remark}
\newtheorem {remark}{{\bf Remark}}[section]

\theoremstyle{definition}

\theoremstyle{plain} \numberwithin {equation}{section}

\numberwithin{assumption}{section}

\allowdisplaybreaks

\usepackage{graphicx}
\usepackage{ifpdf}
\ifpdf
  \usepackage{epstopdf}
\fi

\usepackage{enumerate}


\def\deq{\mathop{\buildrel\Delta\over=}}

\begin{document}

\title{Null controllability for semi-discrete stochastic semilinear parabolic equations\footnote{
    This work is supported by the NSF of China under
grant 12401589, the Fundamental Research Funds for the Central Universities 2682024CX013, and the Key Project of Sichuan Science and Technology \& Education Joint Foundation under grant of 2024NSFSC1963.
}}
\author{
	Yu Wang\footnote{School of Mathematics, Southwest Jiaotong University, Chengdu, P. R. China.
	Email: yuwangmath@163.com.}
	~~~ and ~~~
	Qingmei Zhao\footnote{Corresponding author. School of Mathematics Sciences, Sichuan Normal University, Chengdu, P. R. China. E-mail:
	qmmath@163.com.}
} 

\date{}

\maketitle

\begin{abstract}
The global null controllability of  stochastic semilinear parabolic equations with globally Lipschitz nonlinearities  has been addressed in recent literature. 
However, there are no results concerning their numerical approximation and the behavior of discrete controls when the discretization 
parameter goes to zero. This paper is intended to studying the null controllability for
semi-discrete stochastic semilinear parabolic equations, where the spatial variable is discretized with finite difference scheme and the time is kept as a continuous variable. The proof is based on a new refined semi-discrete Carleman estimate and Banach fixed point method. The main novelty
here is that the Carleman parameters and discretization 
parameter are made explicit and are then used in a Banach fixed point
method. 
\end{abstract}
\bigskip

\noindent {\bf Mathematics Subject Classification}.  Primary 93B05; Secondary 93B07, 93C20
\bigskip

\noindent {\bf Key Words}.  Semi-discrete stochastic semilinear parabolic equations,  null controllability,  global Carleman estimate, Banach fixed point method.

\section{Introduction}

Let $T>0$ and $(\Omega, \mathcal{F}, \mathbf{F},
\mathbb{P})$ with $\mathbf F=\{\mathcal{F}_{t}\}_{t \geq 0}$ be a complete filtered probability space on which a one-dimensional standard Brownian motion $\{W(t)\}_{t \geq 0}$ is defined and $\mathbf{F}$ is the natural filtration generated by $W(\cdot)$,  augmented by all the $\mathbb{P}$ null sets in $\mathcal{F}$. 
Write $ \mathbb{F} $ for the progressive $\sigma$-field with respect to $\mathbf{F}$. 
Let $\mathcal{H}$ be a Banach space. Denote by $L^2_{\mathcal{F}_\tau}(\Omega; \mathcal{H})$ the space of all $\mathcal{F}_\tau$-measurable random variables $\xi$ such that $\mathbb{E} | \xi |^2_{\mathcal{H}}<\infty$; by $L^2_{\mathbb{F}}(0,T;\mathcal{H})$ the
Banach space consisting of all $\mathcal{H}$-valued
$ \mathbf{F} $-adapted processes $X(\cdot)$ such that
$\mathbb{E}(|X(\cdot)|^2_{L^2(0,T;\mathcal{H})})<\infty$, with the
canonical norm; by $L^\infty_{\mathbb{F}}(0,T;\mathcal{H})$ the
Banach space consisting of all $\mathcal{H}$-valued
$ \mathbf{F} $-adapted essentially bounded processes;
and by $L^2_{\mathbb{F}}(\Omega; C([0,T];\mathcal{H}))$ the Banach
space consisting of all $\mathcal{H}$-valued
$ \mathbf{F} $-adapted continuous processes
$X(\cdot)$ such that
$\mathbb{E}(|X(\cdot)|^2_{C([0,T];\mathcal{H})})<\infty$.

Let $ G = (0,1) $ and $ G_{0} $ be a nonempty open subset of $ G $.
As usual, $ \chi_{G_{0}} $   denotes the characteristic function of $ G_{0} $. We consider the following stochastic semilinear parabolic problem in a continuous framework:
\begin{align}
    \label{eqContinuousEq}
	\begin{cases}
		d y - y_{xx} d t = ( f(\omega, t, x,y) + \chi_{G_{0}} u ) d t +  ( g(\omega, t, x,y) +  v)  d W(t) & \text { in } (0,T) \times G, \\ 
		y(t,0) = y(t,1) = 0 & \text { on } (0,T), \\ 
		y(0) = y_{0} & \text { in } G 
		.
	\end{cases}
\end{align}
Here, the initial datum $ y_{0} \in  L^{2}(G) $ and the controls $ (u, v) \in L^{2}_{\mathbb{F}}(0,T; L^{2}(G_{0})) \times  L^{2}_{\mathbb{F}}(0,T; L^{2}(G)) $.
The null controllability problem of \cref{eqContinuousEq} consists of finding controls $ (u, v)$ such that $y(T)=0$ in $G$, $\mathbb{P} \text{-a.s.}$

As we known, the controllability of deterministic nonlinear parabolic equations has been studied by many authors and the results available in the literature are very rich (see e.g., \cite{Barbu2000,Balch2020,Doubova,Fernandez1997,Fernandez2000,Fursikov1996} and the references cited therein). 
A common feature of these results is that they usually use linearization and suitable fixed point methods to address the controllability problem of nonlinear systems. 
To this end, the important property of compactness is often required.

In recent years, the controllability of stochastic parabolic equations has received much attention. However, most of the existing results are focused only on stochastic linear parabolic equations, with very few results on the nonlinear case.
The main reason is that the compactness property, which is one of the key tools in the deterministic setting, is known to be false for the functional spaces related to stochastic setting.
To the best of our knowledge, the only known results in this direction are \cite{Hernandez2022,Hernandez2023,ZhangL,ZhangS}.
In \cite{Hernandez2023}, the authors prove the global null controllability of forward  stochastic semilinear parabolic equations with globally Lipschitz nonlinearities using a refined Carleman estimate and the Banach fixed point theorem in weighted Sobolev spaces. 
This method effectively circumvents the well-known problem of the lack of compactness embeddings for the solution spaces arising in the study of controllability problems for stochastic parabolic equations. 

Specifically, consider the following assumptions for $ f $ and $ g $:
\begin{enumerate}[(\text{A}$1$).]
    \item For each $ \varphi \in L^{2}(G) $, $ f(\cdot, \cdot, \cdot, \varphi) $ and $ g(\cdot, \cdot, \cdot, \varphi) $ are $ L^{2}(G) $-valued $ \mathbf{F}$-adapted processes;
    \item There exists $ L > 0 $, for $ (\omega, t, x, s_{1}, s_{2}) \in \Omega \times [0,T] \times G \times \mathbb{R} \times \mathbb{R} $, it holds that 
    \begin{align*}
        | f (\omega, t, x, s_{1}) - f(\omega, t, x, s_{2})| \leq L |s_{1} - s_{2}|;
    \end{align*}
    \item For $ (\omega, t, x) \in \Omega \times [0,T] \times G $, $ f(\omega, t, x, 0) = 0$;
    \item There exists $ L > 0 $ for $ (\omega, t, x, s_{1}, s_{2}) \in \Omega \times [0,T] \times G \times \mathbb{R} \times \mathbb{R} $, it holds that 
    \begin{align*}
        | g (\omega, t, x, s_{1}) - g(\omega, t, x, s_{2})| \leq L |s_{1} - s_{2}|;
    \end{align*}
    \item For $ (\omega, t, x) \in \Omega \times [0,T] \times G $, $ g(\omega, t, x, 0) = 0$.
\end{enumerate}
Assuming   (A1)--(A3),  then the system \cref{eqContinuousEq} is null controllable, i.e. for every $T>0$ and for every $y_0\in L^2(G)$, there exist controls $ (u, v) \in L^{2}_{\mathbb{F}}(0,T; L^{2}(G_{0})) \times  L^{2}_{\mathbb{F}}(0,T; L^{2}(G)) $ such that the unique solution $y$ of \cref{eqContinuousEq} satisfies $y(T, x ) = 0$ in $G$, $\mathbb{P}$-a.s. (see \cite[Theorem 1.1]{Hernandez2023}).

As the numerical approximation of the control system \cref{eqContinuousEq}, it is natural to consider the null controllability for
semi-discrete stochastic semilinear parabolic equations. 
We shall introduce a finite difference approximation of the
system \cref{eqContinuousEq}. 

Let $ N \in \mathbb{N} $.
Define the space discretization parameter $ h \deq 1/(N+1) $.
Consider the pairs $ (t, x_{i}) $ with $ t \in (0,T) $ and $ x_{i} = i h $ for $ i = 0, \cdots, N+1 $. 
Applying the centered finite difference method to the space variable for the system \cref{eqContinuousEq}, we consider the following semi-discrete stochastic semilinear system:
\begin{align}
	\label{eqDiscreteEq}
	\begin{cases}
		\begin{aligned}
        &d y^{i} - \frac{1}{h^{2}} ( y^{i+1} - 2 y^{i} +  y^{i-1} ) d t 
		\\ 
		&\quad \quad 
		= (f^{i}(y^{i}) + \chi^{i}_{G_{0}} u^{i} ) d t + ( g^{i}(y^{i}) + v^{i}) d W(t) & & t \in  (0,T), \quad i=1,\cdots,N   ,\\ 
		&y^{0}(t) = y^{N+1}(t) = 0 &&  t \in  (0,T) ,\\ 
		&y^{i}(0) = y^{i}_{0} & &i=1,\cdots,N ,
		\end{aligned}
	\end{cases}
\end{align}
where $ f^{i}(y^{i}) = f(x_{i}, y^{i}) $,  $ g^{i}(y^{i}) = g(x_{i}, y^{i}) $, $ y^{i}_{0} = y_{0}(x_{i}) $, and $ \chi^{i}_{G_{0}} = \chi_{G_{0}}(x_{i}) $.

To present our main results, we need to introduce some notations. 
Define the following regular partition of the interval $[0,1]$ as
$ \mathcal{K} \deq  \{x_i \mid  i=0,1, \ldots, N, N+1 \} $.
For any set of points $\mathcal{W} \subset \mathcal{K}$, we denote the dual meshes $\mathcal{W}^{\prime}$ and $\mathcal{W}^*$, respectively, by
\begin{align*}
	\mathcal{W}^{\prime}\deq\tau_{+}(\mathcal{W}) \cap \tau_{-}(\mathcal{W}), \quad \mathcal{W}^*\deq\tau_{+}(\mathcal{W}) \cup \tau_{-}(\mathcal{W}),
\end{align*}
where
\begin{align*}
	\tau_{ \pm}(\mathcal{W})\deq\bigg \{ x \pm \frac{h}{2}  \bigg \rvert\, x \in \mathcal{W}\bigg\}
    .
\end{align*}
Define $\overline{\mathcal{W}}\deq\left(\mathcal{W}^*\right)^*$ and $\mathring{\mathcal{W}}\deq\left(\mathcal{W}^{\prime}\right)^{\prime}$. 
Notice that for two consecutive points $x_i, x_{i+1} \in \mathcal{W}$, we obtain $x_{i+1} - x_i = h$ provided $\mathring{\overline{\mathcal{W}}} = \mathcal{W}$.
Such a subset $\mathcal{W} \subset \mathcal{K}$ is called a regular mesh.
Denote the boundary of a regular mesh $\mathcal{W}$ as $\partial \mathcal{W}\deq\overline{\mathcal{W}} \backslash \mathcal{W}$.

We define the average operator $A_h$ and the difference operator $D_h$   by
\begin{align*}
	& A_h(u)(t, x)\deq\frac{\tau_{+} u(t, x)+\tau_{-} u(t, x)}{2}, \\
& D_h(u)(t, x)\deq\frac{\tau_{+} u(t, x)-\tau_{-} u(t, x)}{h},
\end{align*}
where $\tau_{ \pm} u(t, x) \deq u\big(t, x \pm \frac{h}{2}\big)$.

Let $L(\mathcal{W})$ represent the set of real-valued functions defined in $\mathcal{W}$.  
We denote by $L_h^2(\mathcal{W})$ the Hilbert space with the inner product 
\begin{align*}
	\langle u, v\rangle_{L_h^2(\mathcal{W})}\deq  \int_{\mathcal{W}} u v\deq h \sum_{x \in \mathcal{W}} u(x) v(x)
\end{align*}
For $ u \in L(\mathcal{W}) $,  its $ L^{\infty}_{h}(\mathcal{W}) $-norm is defined as 
\begin{align*}
	|u|_{L^{\infty}_{h}(\mathcal{W})} \deq \max_{x \in \mathcal{W}} \{ |u(x) | \}.
\end{align*}

With above notations, letting  $ Q \deq (0,T) \times \mathcal{M} $ where $ \mathcal{M} \deq \mathring{\mathcal{K}} $, the controlled semi-discrete stochastic semilinear system \cref{eqDiscreteEq} can be written as 
\begin{align}
	\label{eqDiscreteEqOpe}
	\begin{cases}
		d y - D_{h}^{2} y  d t  = (f(y) + \chi_{G_{0}} u ) d t + ( g(y) + v) d W(t) & \text { in } Q,\\ 
		y(t,0) = y(t,1) = 0 & \text { on } (0,T), \\ 
		y(0, x) = y_{0} & \text { in } \mathcal{M},
	\end{cases}
\end{align}
where the initial datum $ y_{0} \in  L^2_{h}(\mathcal{M}) $ and the controls $(u, v) \in  L^{2}_{\mathbb{F}}(0,T;   L^{2}_{h}(G_{0} \cap \mathcal{M})) \times L^{2}_{\mathbb{F}}(0,T;$  $L^{2}_{h}( \mathcal{M}))$. 
Utilizing the classical theory of stochastic differential equations (see \cite[Theorem 3.2]{Lue2021a}),  there is a unique solution  $ y \in L_{\mathbb{F}}^{2}(\Omega ; C([0, T] ; L^{2}_{h}(\mathcal{M}))) $ to \cref{eqDiscreteEqOpe}.
The main purpose of this paper is to study the null controllability for the semi-discrete stochastic semilinear parabolic equation \cref{eqDiscreteEqOpe}.

There are numerous studies on the controllability theory of deterministic linear semi-discrete parabolic equations (see \cite{Allonsius,Boyer2010,Cerpa2022,Labbe,Nguyen,Zuazua2005,Zuazua2006} and the references cited therein).
However, there are fewer studies on the controllability of semi-discrete  semilinear systems.
In \cite{Boyer2014}, the authors study the null controllability of  semi-discrete semilinear parabolic equations, using the linearization and Schauder topological fixed point theorem. 
In contrast,  there are only a few papers which are concerned with controllability of the semi-discrete stochastic  parabolic equations, i.e., \cite{Zhao} which announced the null controllability of linear  semi-discrete stochastic second order parabolic equations, and \cite{Wang} which announced the null controllability of linear  semi-discrete stochastic fourth order parabolic equations. 
To the best of our knowledge, there are no results available regarding the controllability of semi-discrete stochastic nonlinear  parabolic equations.

For discrete systems, it is well known that  uniform null controllability is hard to address, i.e. for any initial condition, there exist controls $(u,v)$ such the solution of system satisfies $y^i(T ) = 0$ for $ i\in {1, . . . , N }$. 
Thus, we consider the $\phi$-null controllability which proposed in \cite{Boyer2014}, i.e. terminal state can be dominated by a function which tends to zero as space discretization parameter $h$ tends to zero. 
This definition are  highly useful for characterizing the behavior of discrete controls when the discretization parameter goes to zero.

To overcome the lack of compactness for the  solution spaces of stochastic differential equations, we borrow some ideas from \cite{Hernandez2023} for proving the null controllability of  stochastic semilinear parabolic equations \cref{eqDiscreteEqOpe}.  First, we introduce a new weight function that differs from the continuous case in \cite{Hernandez2023}, and we obtain a new global Carleman estimate for a liner backward stochastic semi-discrete parabolic equations with a source term
in a suitable weighted space.
Next, by combining the Carleman estimate with the duality argument and the Hilbert Uniqueness Method introduced in \cite{Boyer2013}, we establish the $\phi$-null controllability for a linear system, which Carleman parameters and discretization 
parameter $h$ are made explicit. 
At last, the $\phi$-null controllability of system \cref{eqDiscreteEqOpe} is ensured through a Banach fixed point method which does not rely on any compactness argument.

The main result of this paper is the following.

\begin{theorem}
    \label{eqNullControllabilityForGeneralEquation}
    Under assumptions $ \mathrm{(A1)-(A5)} $, there exist positive constants $ h_{0} $ and $ C $
    such that for all $ h \leq h_{0} $, there exist $ (u, v) \in  L^{2}_{\mathbb{F}}(0,T;   L^{2}_{h}(G_{0} \cap \mathcal{M})) \times L^{2}_{\mathbb{F}}(0,T; L^{2}_{h}( \mathcal{M}))$, so that the solution $ y $ to \cref{eqDiscreteEqOpe} satisfies 
    \begin{align}
        \label{eqEstimateOfSolutionAndControl}
        \mathbb{E} \int_{Q} |y|^{2} d t 
        + \mathbb{E} \int_{0}^{T} \int_{\mathcal{M} \cap G_{0}} |u|^{2} d t 
        + \mathbb{E} \int_{Q} |v|^{2} d t 
        \leq C  
        \int_{\mathcal{M}} |y_{0}|^{2},
    \end{align}
    and 
    \begin{align}
        \label{eqEstimateOfFinalState}
        \mathbb{E} \int_{\mathcal{M}} |y(T)|^{2} 
        \leq C e^{-\frac{C}{h}}  \int_{\mathcal{M}} |y_{0}|^{2}
        .
    \end{align}
\end{theorem}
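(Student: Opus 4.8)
The plan is to reduce the semilinear problem to a \emph{uniform} linear $\phi$-null controllability statement by freezing the nonlinearity, and then to close the argument with a contraction mapping, exactly as indicated in the introduction but now in the semi-discrete stochastic framework. Given a process $z \in L^2_{\mathbb{F}}(0,T;L^2_h(\mathcal{M}))$, I would replace the nonlinear terms by the \emph{source terms} $f(z)$ and $g(z)$ and consider the linear controlled system
\begin{align*}
    dy - D_h^2 y\, dt = \big(f(z) + \chi_{G_0} u\big)\, dt + \big(g(z) + v\big)\, dW(t), \qquad y(0) = y_0.
\end{align*}
Assumptions (A2)--(A5) give $|f(z)| \le L|z|$ and $|g(z)| \le L|z|$ pointwise, so that $z \mapsto (f(z),g(z))$ is globally Lipschitz with constant $L$ into the relevant weighted source spaces; this is the property that will eventually furnish a genuine contraction rather than merely a compactness (Schauder) argument.

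The heart of the linear theory is the new refined semi-discrete Carleman estimate for the backward adjoint equation. Writing $\theta$ for the new semi-discrete weight, I would establish a weighted observability inequality of the form
\begin{align*}
    \mathbb{E}\int_{\mathcal{M}} |p(0)|^2 + \mathbb{E}\int_Q \theta^{-2}|p|^2\, dt \le C\, \mathbb{E}\int_0^T\!\!\int_{\mathcal{M}\cap G_0} \theta^{-2}|p|^2\, dt,
\end{align*}
keeping the dependence of $C$ on the Carleman parameter $s$ and on $h$ fully explicit. The crucial difference with the continuous case is that the discrete weight does not degenerate completely as $t \to T$: the discretization leaves a residual of size $e^{-C/h}$, and it is precisely this residual that produces the relaxed target bound \eqref{eqEstimateOfFinalState} (with $\phi(h) = Ce^{-C/h}$) instead of exact null controllability.

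With the observability inequality in hand, I would apply the Hilbert Uniqueness Method of \cite{Boyer2013}: minimizing over adjoint states a penalized quadratic functional whose penalty parameter is tied to $\phi(h)=Ce^{-C/h}$ yields, by duality, controls $(u,v)$ for the linear system above together with the two quantitative estimates \eqref{eqEstimateOfSolutionAndControl} and \eqref{eqEstimateOfFinalState}, in which all constants depend only on $T$, $G_0$ and $L$ but \emph{not} on $z$. Because the source-to-state and source-to-control maps are linear and bounded in the weighted norms, the weighted size of the returned solution $y$ is controlled by $\int_{\mathcal{M}}|y_0|^2$ plus a small multiple of the weighted size of $z$. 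Defining $\Lambda(z)\deq y$ on a closed ball of the weighted space associated with $\theta$, the uniform estimates make $\Lambda$ map the ball into itself, and subtracting the equations for two iterates together with the Lipschitz bounds $|f(z_1)-f(z_2)|\le L|z_1-z_2|$ and $|g(z_1)-g(z_2)|\le L|z_1-z_2|$ shows that the weighted Lipschitz constant of $\Lambda$ is strictly less than one once $s$ is fixed large enough. Banach's fixed point theorem then gives $y=\Lambda(y)$, which solves the full semilinear system \cref{eqDiscreteEqOpe} and satisfies both required estimates.

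The main obstacle is the semi-discrete Carleman estimate itself. Unlike in the continuous setting, the difference and average operators $D_h$ and $A_h$ do not commute cleanly with the weight $\theta$, so the conjugated operator generates commutator and discrete boundary remainders of order $h$; absorbing these into the dominant first- and zeroth-order Carleman terms forces a sharp admissibility condition relating the parameters (of the type $sh \le \varepsilon_0$), and the inability to make these discrete remainders vanish is exactly what obstructs exact null controllability and necessitates the $e^{-C/h}$ relaxation recorded in \eqref{eqEstimateOfFinalState}. Controlling these terms with constants that are simultaneously explicit in $s$ and $h$, and compatible with the subsequent fixed-point step, is the delicate part of the argument.
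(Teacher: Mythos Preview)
Your plan is correct and follows the paper's strategy at the architectural level: a refined semi-discrete Carleman estimate for the backward equation, a penalized HUM argument for the linear problem with source, and a Banach fixed point to close the semilinear loop. Two implementation choices differ from the paper. First, the paper does \emph{not} carry $g$ through the fixed-point argument: since the diffusion control $v$ acts on all of $Q$, one can absorb $g(y)$ into $v$ after the fact (this is the content of \cref{rkRemoveG}), so the linear theory and the contraction are run only with a drift source $F$, and the fixed-point map is $\mathcal{N}:F\mapsto f(\hat y)$ on the weighted \emph{source} space $\mathfrak{S}_{\lambda,\mu,h}$ rather than $z\mapsto y$ on a weighted state space. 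This buys a cleaner linear controllability statement (no source in the diffusion) and makes the contraction constant explicit as $CL^{2}\lambda^{-3}<1$ for $\lambda$ large, with no need to restrict to a ball. Second, your displayed observability inequality is too streamlined: the actual Carleman estimate \cref{eqDisCarleman} carries, on the right-hand side, both the diffusion correction $Z_\varepsilon$ and the crucial terminal term $h^{-2}\mathbb{E}\int_{\mathcal{M}}e^{2s\phi}|w(T)|^{2}$; it is the interplay of this $h^{-2}$ factor with $e^{2s(T)\phi}\le e^{-\lambda\theta(T)\mu e^{6\mu(m+1)}}$ that fixes the penalty level $\mathcal{E}_{\lambda,\mu,h}$ in \cref{eqEpsilon0} and yields the $e^{-C/h}$ relaxation after choosing $\delta$ so that $\lambda h(\delta T)^{-1}=\varepsilon_0$. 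Your narrative already anticipates this residual, but the quantitative mechanism should be made explicit in the Carleman/HUM step.
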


Here and in what follows, unless otherwise stated, $ C $ stands for a generic positive constant depending only
on $ G $, $ G_{0} $ and $ T $ whose value may vary from line to line.

\begin{remark}
    \label{rkRemoveG}
    We claim that, under assumptions $ \mathrm{(A1)-(A5)} $, the $ \phi$-null controllability for \cref{eqDiscreteEq} can be reduced to the $ \phi$-null controllability of the following equation:
    \begin{align}
        \label{eqDiscreteWithoutDiffuse}
        \begin{cases}
            d y - D_{h}^{2} y  d t  = (f(y) + \chi_{G_{0}} u ) d t + v d W(t) & \text { in } Q,\\ 
            y(t,0) = y(t,1) = 0 & \text { on } (0,T), \\ 
            y(0, x) = y_{0} & \text { in } \mathcal{M}.
        \end{cases}
    \end{align}
    Indeed, assume that the $ \phi$-null controllability for \cref{eqDiscreteWithoutDiffuse} holds. 
    Then there exist $ (u, v) \in  L^{2}_{\mathbb{F}}(0,T; $  $   L^{2}_{h}(G_{0} \cap \mathcal{M})) \times L^{2}_{\mathbb{F}}(0,T; L^{2}_{h}( \mathcal{M}))$ such that the solution to \cref{eqDiscreteWithoutDiffuse} satisfies \cref{eqEstimateOfSolutionAndControl,eqEstimateOfFinalState}.
    Since the control $ v $ is distributed in the whole domain $ Q $, the state $ y $ satisfies \cref{eqDiscreteEqOpe} with the control $ u^{*} = u $ and $ v^{*} = v - g(y) $.
    Hence \cref{eqEstimateOfFinalState} holds.
    Thanks to assumptions $ \mathrm{(A1), (A4)}, \mathrm{(A5)} $ and \cref{eqEstimateOfSolutionAndControl}, one can show that $ (u^{*}, v^{*}) $ satisfies
    \begin{align*}
        \mathbb{E} \int_{0}^{T} \int_{\mathcal{M} \cap G_{0}} |u^{*}|^{2} d t 
        + \mathbb{E} \int_{Q} |v^{*}|^{2} d t 
        \leq C  
        \int_{\mathcal{M}} |y_{0}|^{2},
    \end{align*}
    which implies the $ \phi$-null controllability for \cref{eqDiscreteEqOpe}.
\end{remark}

\begin{remark}
    From \cref{rkRemoveG}, if we only assume (A1)--(A3), then there exist controls $ (u,v) $ such that \cref{eqEstimateOfFinalState} holds, which is similar to the result for a continuous framework (see \cite[Theorem 1.1]{Hernandez2023}).
    However, without assumptions (A4) and (A5), it is difficult to show that the control of the diffusion term is uniformly bounded with respect to discretization parameter $h$.
    This is why we need these assumptions (A4)--(A5)  in \cref{eqNullControllabilityForGeneralEquation}.
\end{remark}

The rest of this paper is organized as follows. In section 2, we give some preliminaries. A new Carleman estimate is proved in section 3. In section 4, we prove  the controllability for  linear stochastic semi-discrete parabolic equations with a source. Finally, we prove the main controllability result \cref{eqNullControllabilityForGeneralEquation} based on the Banach fixed point method in Section 5.

\section{Some preliminary results}

This section is devoted to presenting  some preliminary results we needed.

\begin{lemma}\cite[Lemma 2.1]{Cerpa2022}
	\label{lemmaLibF}
	For any $u, v \in L(\overline{\mathcal{W}})$, we have for the difference operator
	\begin{align}
		\label{eqDuv}
		D_h(u v)=D_h u A_h v+A_h u D_h v \text {, on } \mathcal{W}^* .
	\end{align}	
	Similarly, the average of the product gives
	\begin{align}
		\label{eqAuv}
		A_h(u v)=A_h u A_h v+\frac{h^2}{4} D_h u D_h v \text {, on } \mathcal{W}^* .
	\end{align}
	Finally, on $\mathring{\mathcal{W}} $ we have
	\begin{align}
		\label{eqA2u}
	u=A_h^2 u-\frac{h^2}{4} D_h^2 u.
	\end{align}
\end{lemma}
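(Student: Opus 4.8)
The plan is to prove all three identities by direct computation, expanding $A_h$ and $D_h$ through the shift operators $\tau_\pm$ and then collecting terms. The one structural fact I would isolate at the outset is that each $\tau_\pm$ is a pointwise shift and hence multiplicative: $\tau_\pm(uv) = (\tau_\pm u)(\tau_\pm v)$ at every point where both sides are defined. Once this is recorded, all three identities reduce to elementary algebra in the four quantities $\tau_+ u, \tau_- u, \tau_+ v, \tau_- v$, which I abbreviate by $a_\pm = \tau_\pm u$ and $b_\pm = \tau_\pm v$.

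For \cref{eqDuv} I would use multiplicativity to write $D_h(uv) = h^{-1}(a_+ b_+ - a_- b_-)$, and then expand the claimed right-hand side as $\frac{1}{2h}[(a_+ - a_-)(b_+ + b_-) + (a_+ + a_-)(b_+ - b_-)]$. The mixed products $a_+ b_-$ and $a_- b_+$ enter with opposite signs and cancel, leaving $h^{-1}(a_+ b_+ - a_- b_-)$, which is exactly $D_h(uv)$. The proof of \cref{eqAuv} is identical in spirit: expanding $A_h u\, A_h v + \frac{h^2}{4} D_h u\, D_h v = \frac14[(a_+ + a_-)(b_+ + b_-) + (a_+ - a_-)(b_+ - b_-)]$, the mixed products again cancel and one is left with $\frac12(a_+ b_+ + a_- b_-) = A_h(uv)$. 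Here the factor $h^2/4$ is precisely what rescales $D_h u\, D_h v$ to absorb the two factors of $1/h$ hidden in the difference quotients.

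For \cref{eqA2u} I would first compute the two symmetric three-point stencils obtained by iterating the definitions, the half-step shifts composing to full steps: $A_h^2 u(x) = \frac14[u(x+h) + 2u(x) + u(x-h)]$ and $D_h^2 u(x) = h^{-2}[u(x+h) - 2u(x) + u(x-h)]$. Subtracting $\frac{h^2}{4} D_h^2 u$ from $A_h^2 u$ cancels the contributions at $x \pm h$ and doubles the central term, giving $\frac14 \cdot 4\, u(x) = u(x)$.

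The only point requiring genuine care is the bookkeeping of the meshes on which each identity is asserted, and this is dictated purely by where the composed shifts land, not by any analytic difficulty. For $u,v \in L(\overline{\mathcal{W}})$ the shifted products $\tau_\pm u\, \tau_\pm v$ are defined exactly on $\mathcal{W}^* = \tau_+(\mathcal{W}) \cup \tau_-(\mathcal{W})$, which is the set on which \cref{eqDuv,eqAuv} are stated; for \cref{eqA2u}, applying $A_h$ and $D_h$ twice contracts the domain to $\mathring{\mathcal{W}} = (\mathcal{W}')'$, so that every evaluation $u(x \pm h)$ and $u(x)$ appearing in the stencils still lies in $\overline{\mathcal{W}}$ and the identity is meaningful pointwise. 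Since all the underlying algebra is exact, no error terms or passages to a limit arise, and the proof is complete once these expansions are written out.
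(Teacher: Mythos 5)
Your proof is correct: the multiplicativity of the shifts $\tau_{\pm}$ reduces \cref{eqDuv,eqAuv} to the exact cancellation of the mixed products $\tau_+u\,\tau_-v$ and $\tau_-u\,\tau_+v$, the three-point stencils give \cref{eqA2u} at once, and your mesh bookkeeping ($\mathcal{W}^*$ for the product rules, $\mathring{\mathcal{W}}$ for the second-order identity) is exactly right. The paper itself offers no proof, citing the result verbatim from \cite{Cerpa2022}, and your direct computation is precisely the standard argument given there, so there is nothing to add.
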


\begin{corollary}\cite[Corollary 2.2]{Cerpa2022}
	Let $\overline{\mathcal{W}} \subseteq \mathcal{M}$ be a regular mesh.
    For $u \in L(\overline{\mathcal{W}})$,
	\begin{align}
		\label{eqAu2}
		A_h  (u^2) = (A_h u) ^2+\frac{h^2}{4} (D_h u )^2, \text { on } \mathcal{W}^*.
	\end{align}
	In particular, for all $u \in L(\overline{\mathcal{W}})$,
	\begin{align}
		\label{eqAu2leq}
		A_h(u^2) \geq(A_h u)^2 \text {, on } \mathcal{W}^* .
	\end{align}
	For $u \in L(\overline{\mathcal{W}})$
	\begin{align}
		\label{eqDu2}
		D_h(u^2)=2 D_h u A_h u .
	\end{align}
\end{corollary}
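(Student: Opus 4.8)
The plan is to obtain all three identities as immediate specializations of the product rules in \cref{lemmaLibF}, taking the second argument equal to the first. Since \cref{eqAuv,eqDuv} hold for arbitrary $u, v \in L(\overline{\mathcal{W}})$ on $\mathcal{W}^*$, setting $v = u$ is legitimate under the standing hypotheses $\overline{\mathcal{W}} \subseteq \mathcal{M}$ and $u \in L(\overline{\mathcal{W}})$, so no fresh computation beyond this substitution is required.

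First I would establish \cref{eqAu2}. Applying \cref{eqAuv} with $v = u$ gives
\begin{align*}
    A_h(u^2) = A_h u \, A_h u + \frac{h^2}{4} D_h u \, D_h u = (A_h u)^2 + \frac{h^2}{4}(D_h u)^2
    \quad \text{on } \mathcal{W}^*,
\end{align*}
which is the claimed decomposition. The inequality \cref{eqAu2leq} then follows at once: the term $\frac{h^2}{4}(D_h u)^2$ is nonnegative pointwise, so discarding it yields $A_h(u^2) \geq (A_h u)^2$ on $\mathcal{W}^*$. Finally, for \cref{eqDu2} I would apply \cref{eqDuv} with $v = u$, obtaining
\begin{align*}
    D_h(u^2) = D_h u \, A_h u + A_h u \, D_h u = 2 D_h u \, A_h u
    \quad \text{on } \mathcal{W}^*,
\end{align*}
where the last step uses only commutativity of the pointwise product.

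There is no substantial obstacle here: the corollary is a direct consequence of \cref{lemmaLibF}, its entire content being that the two bilinear formulas collapse to the stated quadratic identities when both arguments coincide. The only point meriting attention is domain bookkeeping, namely checking that each assertion is made consistently on $\mathcal{W}^*$ and that $A_h u$, $D_h u$, and $u^2$ are all defined there; this is guaranteed by $u \in L(\overline{\mathcal{W}})$ together with $\overline{\mathcal{W}} \subseteq \mathcal{M}$.
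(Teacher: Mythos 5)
Your proposal is correct and is essentially the intended argument: the paper cites this corollary from \cite{Cerpa2022} without proof precisely because it follows by setting $v=u$ in \cref{eqAuv,eqDuv} of \cref{lemmaLibF}, exactly as you do, with \cref{eqAu2leq} obtained by discarding the nonnegative term $\frac{h^2}{4}(D_h u)^2$. Your domain bookkeeping on $\mathcal{W}^*$ is also consistent with the lemma's hypotheses, so nothing is missing.
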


To deal with the boundary conditions, we define the outward normal for $ x \in \partial \mathcal{W} $ as 
\begin{align}
	\label{eqNu}
	\nu(x) \deq \begin{cases}
		1, & \text {if }\tau_{-}(x) \in \mathcal{W}^{*} \text{ and } \tau_{+}(x) \notin \mathcal{W}^{*}, \\
		- 1, & \text {if } \tau_{-}(x) \notin \mathcal{W}^{*} \text{ and } \tau_{+}(x) \in \mathcal{W}^{*}, \\
		0, &  \text{otherwise}.
	\end{cases}
\end{align}
We also introduce the trace operator for $ u \in L(\mathcal{W}^{*}) $ as 
\begin{align*}
	\forall \; x \in \partial \mathcal{W},  \quad  t_{r}(u) \deq 
	\begin{cases}
		\tau_{-} u(x), &  \text {if }  \nu(x)=1, \\
		\tau_{+} u(x), &  \text {if }  \nu(x)=-1, \\
		0, &   \text {if } \nu(x)=0.
	\end{cases}
\end{align*}
Then by the definition of  trace operator, for $ x \in \partial \mathcal{W} $ and $ u \in L(\overline{\mathcal{W}}) $, it is easy to check that
\begin{align}\label{tra}
t_r(|A_hu|^2)\nu-\frac{h^2}{4}t_r(|D_hu|^2)\nu=|u|^2\nu-hut_r(D_hu) .
\end{align}
Finally, let us  introduce the discrete integration on the boundary for $u \in L(\partial \mathcal{W})$ as
\begin{align*}
\int_{\partial \mathcal{W}} u \deq \sum_{x \in \partial \mathcal{W}} u(x) .
\end{align*}

\begin{proposition}\cite[Proposition 2.3]{Cerpa2022}
	Let $\mathcal{W}$ be a semi-discrete regular mesh. For $u \in L(\overline{\mathcal{W}})$ and $v \in L (\mathcal{W}^*)$ we have
	\begin{align}
		\label{eqDIgbp}
		\int_\mathcal{W} u D_h v=-\int_{\mathcal{W}^*} D_h u v+\int_{\partial \mathcal{W}} u t_{r}(v) \nu
	\end{align}
	and
	\begin{align}
		\label{eqAIgbp}
		\int_\mathcal{W} u A_h v=\int_{\mathcal{W}^*} A_h u v-\frac{h}{2} \int_{\partial \mathcal{W}} u t_{r}(v) .
	\end{align}
\end{proposition}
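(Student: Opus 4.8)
The plan is to derive both identities by discrete summation by parts (Abel summation), reducing each to a single reindexing of the underlying grid sum and then matching the leftover endpoint contributions against the boundary integrals. Since $\mathcal{W}$ is a regular mesh, I would first enumerate its points consecutively as $x_j = a + jh$, $j = 0,\dots,n-1$, so that the dual nodes $y_k = a + (k-\tfrac12)h$, $k = 0,\dots,n$, are exactly the points of $\mathcal{W}^*$, with $y_1,\dots,y_{n-1}$ the points of $\mathcal{W}'$ and $y_0,y_n$ the two extreme dual nodes. The crucial observation, read off from the definition $\tau_\pm w(x) = w(x\pm\tfrac{h}{2})$, is that the half-step shifts move between neighbouring grids in a reversible way: $\tau_+ v(x_j) = v(y_{j+1})$, $\tau_- v(x_j) = v(y_j)$, and symmetrically $\tau_+ u(y_k) = u(x_k)$, $\tau_- u(y_k) = u(x_{k-1})$. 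Here $u \in L(\overline{\mathcal{W}})$ is defined at all the required nodes, including the two points of $\partial\mathcal{W}=\overline{\mathcal{W}}\setminus\mathcal{W}$, because $\overline{\mathcal{W}} = (\mathcal{W}^*)^*$.

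For the first identity I would expand, using the definitions of $\langle\cdot,\cdot\rangle_{L^2_h}$ and of $D_h$,
\[
\int_{\mathcal{W}} u\,D_h v = \sum_{j=0}^{n-1} u(x_j)\bigl( v(y_{j+1}) - v(y_j) \bigr),
\]
then shift the index $k=j+1$ in the first sum to collect the coefficient of each $v(y_k)$. On an interior node $y_k\in\mathcal{W}'$ this coefficient is $u(x_{k-1}) - u(x_k) = \tau_- u(y_k) - \tau_+ u(y_k) = -h\,D_h u(y_k)$, which assembles exactly the interior part of $-\int_{\mathcal{W}^*} D_h u\, v$. The only terms not of this shape are those attached to $y_0$ and $y_n$; invoking the explicit values of $\nu$ and $t_r$ at the two points of $\partial\mathcal{W}$ (where $\nu=-1$ at the left boundary node and $\nu=+1$ at the right, with $t_r(v)$ equal to $v(y_0)$ and $v(y_n)$ respectively), a short direct computation shows that these residuals, together with the endpoint terms of $-\int_{\mathcal{W}^*} D_h u\, v$, reproduce precisely $\int_{\partial\mathcal{W}} u\,t_r(v)\,\nu$. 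The average identity follows the same route: expanding
\[
\int_{\mathcal{W}} u\,A_h v = \frac{h}{2}\sum_{j=0}^{n-1} u(x_j)\bigl( v(y_{j+1}) + v(y_j) \bigr)
\]
and reindexing yields $h\,A_h u(y_k)\,v(y_k)$ on interior nodes (now using $\tau_+ u + \tau_- u = 2A_h u$), hence the interior part of $\int_{\mathcal{W}^*} A_h u\, v$, while the $y_0,y_n$ residuals combine with $-\tfrac{h}{2}\int_{\partial\mathcal{W}} u\,t_r(v)$.

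I expect the only delicate point to be the boundary bookkeeping, specifically the appearance of the signed boundary integral (with the factor $\nu$) in the $D_h$ case against the unsigned one (with the prefactor $-\tfrac{h}{2}$ and no $\nu$) in the $A_h$ case. This asymmetry is forced by the opposite signs in $D_h = (\tau_+ - \tau_-)/h$ versus the symmetric combination in $A_h = (\tau_+ + \tau_-)/2$: the antisymmetric shift produces a difference of ghost-node values that the normal $\nu$ records, whereas the symmetric shift produces a sum whose excess at the endpoints is exactly the $-\tfrac{h}{2}$ trace term. Once the interior coefficients are matched, confirming that the two extreme-node residuals coincide with the stated boundary integrals is a finite, elementary verification using the definitions of $\nu$ and $t_r$; alternatively, one may observe that \cref{tra} already encodes the same boundary algebra and invoke it to shortcut this last step.
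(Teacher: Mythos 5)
Your proposal is correct: the index bookkeeping checks out exactly (with $\partial\mathcal{W}=\{x_{-1},x_n\}$, $\nu(x_{-1})=-1$, $\nu(x_n)=+1$, $t_r(v)=v(y_0)$ resp.\ $v(y_n)$, the endpoint residuals $-u(x_0)v(y_0)+u(x_{n-1})v(y_n)$ and $\frac{h}{2}\bigl(u(x_0)v(y_0)+u(x_{n-1})v(y_n)\bigr)$ match both sides), and this Abel-summation argument is essentially the proof of the cited source \cite{Cerpa2022}, the paper itself giving no proof. The only quibble is your closing aside: \cref{tra} is a quadratic trace identity and does not directly shortcut the boundary verification here, but your direct endpoint computation is complete without it.
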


\begin{proposition}\cite[Corollary 2.7]{Cerpa2022}
	\label{propADf}
	Let $ f $ be a $ (n+4) $-times differentiable function defined on $ \mathbb{R} $ and $ m, n \in \mathbb{N} $. Then 
	\begin{align*}
		A_h^m D_h^n f=f^{(n)}+R_{A_h^m} (f^{(n)} )+R_{D_h^n}(f)+R_{A_h^m D_h^n}(f),
	\end{align*}
	where 
	\begin{align*}
		\begin{gathered}
			R_{D_h^n}(f):=h^2 \sum_{k=0}^n\begin{pmatrix}
				n \\
				k
			\end{pmatrix}(-1)^k\bigg(\frac{(n-2 k)}{2}\bigg)^{n+2} \int_0^1 \frac{(1-\sigma)^{n+1}}{(n+1) !} f^{(n+2)}\bigg(\cdot+\frac{(n-2 k) h}{2} \sigma\bigg) d \sigma \\
			R_{A_h^n}(f):=\frac{h^2}{2^{n+2}} \sum_{k=0}^n\begin{pmatrix}
				n \\
				k
			\end{pmatrix}(n-2 k)^2 \int_0^1(1-\sigma) f^{(2)}\bigg(\cdot+\frac{(n-2 k) h}{2} \sigma\bigg) d \sigma
			,
			\end{gathered}
	\end{align*}
	and $ R_{A_h^m D_h^n}:=R_{A_h^m} \circ R_{D_h^n} $. Here, $ f^{(n)} $ denotes the $ n $-th derivative of $ f $ with respect to $ x $.
\end{proposition}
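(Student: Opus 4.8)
The plan is to prove the two one-operator expansions
\[
A_h^m = \mathrm{Id} + R_{A_h^m}, \qquad D_h^n = \tfrac{d^n}{dx^n} + R_{D_h^n}
\]
separately, each as an exact Taylor formula with integral remainder, and then to compose them. Since $\tau_+,\tau_-$ commute, so do $A_h$ and $D_h$, and $A_h^m D_h^n$ is the composition that applies $D_h^n$ first. First I would record that, by linearity of $A_h^m$,
\[
A_h^m D_h^n f = A_h^m\big(f^{(n)} + R_{D_h^n}(f)\big) = f^{(n)} + R_{A_h^m}(f^{(n)}) + R_{D_h^n}(f) + R_{A_h^m}\!\big(R_{D_h^n}(f)\big),
\]
where the last term equals $R_{A_h^m D_h^n}(f)$ by the very definition $R_{A_h^m D_h^n} := R_{A_h^m}\circ R_{D_h^n}$. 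Thus the whole statement reduces to the two single-operator expansions applied with $g = f^{(n)}$ and with $g = R_{D_h^n}(f)$.

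For $D_h^n$: since $D_h = h^{-1}(\tau_+ - \tau_-)$ with $\tau_\pm$ the half-shifts $u\mapsto u(\cdot\pm h/2)$, the binomial theorem gives $D_h^n f(x) = h^{-n}\sum_{k=0}^n \binom{n}{k}(-1)^k f\!\big(x + \tfrac{(n-2k)h}{2}\big)$. I would expand each shifted value by Taylor's formula to order $n+1$ with integral remainder, $f(x+a) = \sum_{j=0}^{n+1}\tfrac{a^j}{j!}f^{(j)}(x) + \tfrac{a^{n+2}}{(n+1)!}\int_0^1(1-\sigma)^{n+1}f^{(n+2)}(x+a\sigma)\,d\sigma$ with $a = (n-2k)h/2$, and sum against $\binom{n}{k}(-1)^k$. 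The polynomial part is governed by the moments $M_j := \sum_{k=0}^n\binom{n}{k}(-1)^k(n-2k)^j$, which are (up to $j!$) the Taylor coefficients of the generating function $\sum_{k}\binom{n}{k}(-1)^k e^{(n-2k)t} = (2\sinh t)^n$. Because $(2\sinh t)^n = 2^n t^n + O(t^{n+2})$ and has the parity of $n$, one obtains $M_j = 0$ for $j \le n-1$ and for $j = n+1$, while $M_n = 2^n n!$. Hence every explicit term of degree $0,\dots,n-1$ and $n+1$ cancels, the degree-$n$ term collapses to exactly $f^{(n)}(x)$, and only the integral remainders survive; inserting $a^{n+2} = (n-2k)^{n+2}h^{n+2}/2^{n+2}$ and dividing by $h^n$ reproduces the stated $R_{D_h^n}(f)$ with its prefactor $h^2$.

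For $A_h^m$: identically, $A_h = \tfrac12(\tau_+ + \tau_-)$ gives $A_h^m g(x) = 2^{-m}\sum_{k=0}^m\binom{m}{k}g\big(x + \tfrac{(m-2k)h}{2}\big)$, and here I would Taylor-expand only to order $1$ with second-order integral remainder. The zeroth moment $\sum_k\binom{m}{k} = 2^m$ returns $g$, the first moment $\sum_k\binom{m}{k}(m-2k)$ vanishes by the symmetry $k\leftrightarrow m-k$, and the quadratic remainder yields $R_{A_h^m}(g)$ as displayed. Applying this with $g=f^{(n)}$ and with $g=R_{D_h^n}(f)$ closes the composition. I would note that the hypothesis that $f$ be $(n+4)$-times differentiable is exactly what is required: $R_{D_h^n}(f)$ already carries $f^{(n+2)}$, so the second-order expansion of $A_h^m$ applied to it needs two further derivatives, i.e. $f^{(n+4)}$.

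The main obstacle is the moment bookkeeping for $D_h^n$: it does not suffice to invoke the classical finite-difference vanishing $M_j=0$ for $j<n$; one must additionally observe the parity cancellation $M_{n+1}=0$, which is precisely what upgrades the scheme from first- to second-order accuracy and forces the remainder to begin at $h^2 f^{(n+2)}$ rather than $h f^{(n+1)}$. Matching the combinatorial constants $2^{-(n+2)}$ and $(n+1)!$ exactly, and keeping track of which shift argument $(n-2k)h/2$ sits inside each $f^{(n+2)}$, is the delicate part; everything else is linear algebra on commuting shift operators.
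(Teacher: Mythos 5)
Your proposal is correct: the two exact single-operator expansions $D_h^n f = f^{(n)} + R_{D_h^n}(f)$ and $A_h^m g = g + R_{A_h^m}(g)$, each obtained from Taylor's formula with integral remainder together with the moment identities $M_j=0$ for $j<n$ and $j=n+1$ and $M_n = 2^n n!$ read off from the generating function $(2\sinh t)^n$ (and, for $A_h^m$, the vanishing of the first moment by the symmetry $k\leftrightarrow m-k$), compose by linearity of $A_h^m$ into precisely the stated four-term identity, with all constants matching and with your observation that the hypothesis of $(n+4)$-fold differentiability is exactly what permits the second-order expansion of $A_h^m$ applied to $R_{D_h^n}(f)$. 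Note that the paper gives no proof of its own --- it imports the statement verbatim from \cite[Corollary 2.7]{Cerpa2022} --- and your argument is essentially the proof in that reference, which likewise establishes the separate expansions of $D_h^n$ and $A_h^m$ and then composes them, so there is nothing to flag beyond the routine justification of differentiating under the remainder integrals, which your regularity hypothesis covers.
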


\section{Carleman estimate}
This section is devoted to proving a new Carleman estimate for a backward stochastic semi-discrete parabolic equation. To state our Carleman estimates, we first introduce the weight functions. 

Let $\widetilde{G}$ be an open interval contains $[0,1]$, and $ G_{1}, G_{2}$ be  given two nonempty open
subsets of $G$ satisfying $ G_{2} \subset  \overline{G_{2}} \subset G_{1} \subset  \overline{G_{1}} \subset G_{0} $ and $ \overline{\mathcal{M} \cap G_{2}} \subset \mathcal{M} \cap G_{1}$.
Thanks to \cite[Lemma 1.1]{Fursikov1996}, there exists a function $ \psi \in C^{k}(\overline{\widetilde{G}})$ with $k$ large enough such that
\begin{equation}
	\label{eqPsi}
	0<\psi \le 1 \text{ in } \widetilde{G}, 
	\quad| \partial_{x} \psi| \geq C_{0} >0 \text { in } \overline{ G \backslash G_{2}},
	\quad \partial_{x} \psi(0) > 0,
	  \text{ and }
	\partial_{x}\psi(1) < 0
	.
\end{equation}
Without loss of generality, in what follows,
we assume that $ 0 < T < 1 $.
For some constants $ m \geq 1 $, $ \sigma \geq 2 $ and $ 0 < \delta < \frac{1}{2} $, we define the following weight function 
\begin{align}
    \label{eqWeightedFunctionTime}
    \left\{
        \begin{aligned}
            & \theta(t) = 1 + \bigg( 1- \dfrac{4 t}{T}\bigg)^{\sigma}, && t \in [0, T/4),
            \\
            & \theta(t) = 1, && t \in [T/4, T/2),
            \\
            & \theta \text{ is in increasing on } [T/2, 3T/4),
            \\
            & \theta(t) = \dfrac{1}{(T-t+ \delta T)^{m}},  && t \in [3T/4, T],
            \\
            & \theta \in C^{2}([0,T]).
        \end{aligned}
    \right.
\end{align}
For any parameters $ \lambda \geq 1 $ and $ \mu \geq 1 $, let 
\begin{align}
    \label{eqWeightFuncitonDefine}
    \left\{
        \begin{aligned}
            & \phi(x) = e^{\mu(\psi(x) + 6 m)} - \mu e^{6 \mu(m+1)},
            \quad  \varphi(x) = e^{\mu(\psi(x) + 6 m)}, 
            \\
            & s(t) = \lambda \theta(t),
            \quad  r(t,x) = e^{s(t) \phi(x)}, 
            \quad \rho(t, x)= (r(t,x))^{-1}.
        \end{aligned}
    \right.
\end{align}
We choose $ \sigma $ as 
\begin{align}
    \label{eqSigma}
    \sigma = \lambda \mu^{2} e^{\mu( 6m - 4)}.
\end{align}

\begin{remark}
	In the continuous setting, the regularity of the weight function $\psi$ can be chosen as $C^2([0,1];\mathbb{R})$. Here estimates of discrete derivations of the weight function require knowledge of higher-order derivatives through Taylor formulas. Moreover, to facilitate the Taylor expansion of the weight function at the boundary, the domain of $\psi$ has been slightly extended.
\end{remark}

\begin{remark}
The assumption $ 0 < T < 1 $ is technical. In fact, if $T\ge 1$, we can adjust the values of  $\theta(t)$
 on the interval $(T/2, T]$, and the corresponding Carleman estimate \cref{eqDisCarleman} still holds.
\end{remark}

\begin{remark}
It can be seen from the \cref{eqWeightedFunctionTime} that, compared with the weight function in continuous setting (see \cite{Hernandez2023}), the main difference here is that the weight does not singular at times $t= T$. 
\end{remark}

In the sequel, for any $n\in \mathbb{N}$, we denote by $\mathcal{O}(s^n)$ a function of
order $s^n$ for sufficiently large $s$ (which is independent of $\mu$ and $h$), by $\mathcal{O}_\mu(s^n)$ a function of order $s^n$
for fixed $\mu$ and sufficiently
large $s$ (which is independent of $h$), and
by $\mathcal{O}_\mu((sh)^n)$ a function of
order $(sh)^n$ for fixed $\mu$ and sufficiently
large $s$, sufficiently
small $h$.

We need some estimates regarding the weighted functions.

\begin{lemma}\cite[Lemma 4.3]{Lecaros2021}
	For $ \alpha, \beta \in \mathbb{N} $, we have 
	\begin{align}
		\label{eqprpr} \notag
		\partial_{x}^{\beta} (r \partial_{x}^{\alpha} \rho) 
		& = 
		\alpha^{\beta} (- s \varphi)^{\alpha} \mu^{\alpha+\beta} (\partial_{x} \psi)^{\alpha + \beta}
		+ \alpha \beta(s \varphi)^{\alpha} \mu^{\alpha + \beta - 1} \mathcal{O}(1)
		+ s^{\alpha - 1} \alpha (\alpha - 1) \mathcal{O}_{\mu}(1)
		\\
		& = s^{\alpha} \mathcal{O}_{\mu}(1)
		.
	\end{align}
\end{lemma}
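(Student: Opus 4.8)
The plan is to reduce the whole computation to one exponential-derivative identity and then read off the orders in $s$ and $\mu$ by inspection, keeping careful track of two combinatorial bookkeepings. The only analytic input is elementary: since the additive constant $-\mu e^{6\mu(m+1)}$ in $\phi$ is annihilated by every $\partial_x$, one has $\partial_x^{j}\phi=\partial_x^{j}\varphi$ for all $j\ge 1$, and in particular $\phi_x=\mu\,\partial_x\psi\,\varphi$. Applying Fa\`a di Bruno to $\varphi=e^{\mu(\psi+6m)}$ gives $\partial_x^{j}\varphi=\mu^{j}(\partial_x\psi)^{j}\varphi+\mu^{j-1}\varphi\,\mathcal{O}(1)$, so each $\partial_x^{j}\phi$ is $\mathcal{O}_\mu(1)$ with leading $\mu$-power $\mu^{j}$; here I use that $\psi\in C^{k}(\overline{\widetilde{G}})$ with $k$ large guarantees all required derivatives of $\psi$ are bounded. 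Everything after this is accounting.

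First I would compute $r\,\partial_x^{\alpha}\rho=e^{s\phi}\partial_x^{\alpha}e^{-s\phi}$ through the complete Bell polynomial form of Fa\`a di Bruno, writing it as $B_\alpha(-s\phi_x,-s\partial_x^{2}\phi,\dots,-s\partial_x^{\alpha}\phi)$. Grouping the monomials of $B_\alpha$ by the number of blocks of the underlying set partition sorts them by their power of $s$, because each block contributes exactly one factor $-s\partial_x^{j}\phi$: the unique partition into $\alpha$ singletons yields the top term $(-s\phi_x)^{\alpha}=(-s\varphi)^{\alpha}\mu^{\alpha}(\partial_x\psi)^{\alpha}+\dots$ at order $s^{\alpha}$, while the partitions with $\alpha-1$ blocks (one pair, $\alpha-2$ singletons) contribute at order $s^{\alpha-1}$ with weight $\binom{\alpha}{2}=\tfrac{\alpha(\alpha-1)}{2}$. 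This is exactly where the factor $\alpha(\alpha-1)$ in the third term originates, and it vanishes automatically for $\alpha\in\{0,1\}$, consistent with $r\rho=1$ and $r\,\partial_x\rho=-s\phi_x$.

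Next I would apply $\partial_x^{\beta}$ to this expansion and track the leading $\mu$-behaviour of the top $s$-term. Writing that piece as $(-s)^{\alpha}\mu^{\alpha}(\partial_x\psi)^{\alpha}\varphi^{\alpha}$ with $\varphi^{\alpha}=e^{\alpha\mu(\psi+6m)}$, the Leibniz rule distributes the $\beta$ derivatives among the inert factor $\mu^{\alpha}$, the factor $(\partial_x\psi)^{\alpha}$, and $\varphi^{\alpha}$; the maximal $\mu$-power arises when all $\beta$ derivatives fall on $\varphi^{\alpha}$, each producing a factor $\alpha\mu\,\partial_x\psi$, giving $\alpha^{\beta}(-s\varphi)^{\alpha}\mu^{\alpha+\beta}(\partial_x\psi)^{\alpha+\beta}$, the announced leading term. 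Any derivative landing instead on $(\partial_x\psi)^{\alpha}$, or selecting a lower-order piece of $\partial_x^{j}\varphi$, costs one power of $\mu$; counting the $\beta$ available positions together with the factor $\alpha$ from the exponent produces the $\alpha\beta(s\varphi)^{\alpha}\mu^{\alpha+\beta-1}\mathcal{O}(1)$ correction. The lower $s$-terms from the first step, after $\partial_x^{\beta}$, remain of order $s^{\alpha-1}$ and bounded for fixed $\mu$, hence are absorbed into $s^{\alpha-1}\alpha(\alpha-1)\mathcal{O}_\mu(1)$.

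Finally, the second displayed equality is immediate: for fixed $\mu$ the first two terms are $O(s^{\alpha})$ with fixed $\mu$-dependent constants, and the third is $O(s^{\alpha-1})=o(s^{\alpha})$, so the whole expression is $s^{\alpha}\mathcal{O}_\mu(1)$. I expect the main obstacle to be the disciplined two-parameter accounting: simultaneously ordering terms by powers of $s$ (controlled by the block structure of the Bell polynomial) and by powers of $\mu$ (controlled by how $\partial_x^{\beta}$ is allocated by Leibniz), while verifying that the cross terms yield precisely the coefficients $\alpha\beta$ and $\alpha(\alpha-1)$ rather than unspecified $O(1)$ multiples. An alternative that sidesteps Bell polynomials is a double induction on $\alpha$ and $\beta$ based on the operator recursion $r\,\partial_x^{\alpha+1}\rho=(\partial_x-s\phi_x)\bigl(r\,\partial_x^{\alpha}\rho\bigr)$, which makes the drop in $s$-degree transparent but demands the same vigilance with the $\mu$-orders.
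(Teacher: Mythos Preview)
The paper does not supply a proof of this lemma; it is simply quoted, with citation, from \cite[Lemma~4.3]{Lecaros2021}. There is therefore nothing in the paper to compare your argument against.

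That said, your proposal is a correct and complete proof. The reduction to the Bell-polynomial form $r\,\partial_x^{\alpha}\rho=B_\alpha(-s\phi_x,\dots,-s\partial_x^{\alpha}\phi)$, followed by the two-parameter accounting (block count for the $s$-power, Leibniz allocation for the $\mu$-power), is exactly the right mechanism, and you have correctly identified the origin of the coefficients $\alpha^{\beta}$, $\alpha\beta$, and $\alpha(\alpha-1)$. The only cosmetic point is that your $\binom{\alpha}{2}=\tfrac{\alpha(\alpha-1)}{2}$ differs from the stated $\alpha(\alpha-1)$ by a factor of $2$, but since that coefficient sits in front of an $\mathcal{O}_\mu(1)$ it is immaterial---its role is purely to record the vanishing for $\alpha\in\{0,1\}$, which you noted. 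The alternative double-induction route via $r\,\partial_x^{\alpha+1}\rho=(\partial_x-s\phi_x)(r\,\partial_x^{\alpha}\rho)$ that you mention is equally valid and is in fact closer in spirit to how such identities are often derived in the Carleman-estimate literature.
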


\begin{corollary}\cite[Corollary 4.4]{Lecaros2021}
	For $ \alpha, \beta, \gamma \in \mathbb{N} $, we have 
	\begin{align}
		\label{eqpr2pr} \notag
		\partial_{x}^{\gamma} (r^{2} (\partial_{x}^{\alpha} \rho) \partial_{x}^{\beta} \rho)
		& = 
		(\alpha + \beta)^{\gamma}(- s \varphi)^{\alpha+\beta} \mu^{\alpha+\beta+\gamma} (\partial_{x} \psi)^{\alpha + \beta + \gamma}
		+ \gamma (\alpha + \beta) (s \varphi)^{\alpha + \beta} \mu^{\alpha + \beta + \gamma -1} \mathcal{O}(1)
		\\ \notag
		& \quad 
		+ s^{\alpha + \beta - 1} [\alpha(\alpha-1)+ \beta(\beta-1)] \mathcal{O}_{\mu}(1)
		\\
		& =
		s^{\alpha+\beta}\mathcal{O}_{\mu}(1)
		.
	\end{align}
\end{corollary}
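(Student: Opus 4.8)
The plan is to reduce the two–factor identity to the single–factor estimate of the preceding Lemma by exploiting the purely algebraic factorization
\begin{align*}
	r^{2}\,(\partial_{x}^{\alpha}\rho)(\partial_{x}^{\beta}\rho)=\big(r\,\partial_{x}^{\alpha}\rho\big)\big(r\,\partial_{x}^{\beta}\rho\big),
\end{align*}
which holds simply because $r^{2}=r\cdot r$ (the relation $r\rho=1$ is used only implicitly, through the statement of the preceding Lemma, which concerns $r\,\partial_{x}^{\alpha}\rho$ with $r=\rho^{-1}$). Writing $P\deq r\,\partial_{x}^{\alpha}\rho$ and $Q\deq r\,\partial_{x}^{\beta}\rho$, the Leibniz rule gives
\begin{align*}
	\partial_{x}^{\gamma}\big(r^{2}(\partial_{x}^{\alpha}\rho)(\partial_{x}^{\beta}\rho)\big)
	=\sum_{j=0}^{\gamma}\binom{\gamma}{j}\big(\partial_{x}^{j}P\big)\big(\partial_{x}^{\gamma-j}Q\big),
\end{align*}
so that the whole computation is driven by the expansions of $\partial_{x}^{j}P=\partial_{x}^{j}(r\,\partial_{x}^{\alpha}\rho)$ and $\partial_{x}^{\gamma-j}Q=\partial_{x}^{\gamma-j}(r\,\partial_{x}^{\beta}\rho)$ already furnished by the preceding Lemma (with the roles of the exponent and of the differentiation order played by $\alpha,j$ and by $\beta,\gamma-j$, respectively). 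Each factor thus splits into three pieces: a leading piece of order $s^{\alpha}\mu^{\alpha+j}$ (respectively $s^{\beta}\mu^{\beta+\gamma-j}$), a middle piece of order $s^{\alpha}\mu^{\alpha+j-1}\mathcal{O}(1)$, and an error piece of order $s^{\alpha-1}\mathcal{O}_{\mu}(1)$.

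Next I would multiply out the nine families of products arising from these three–by–three pieces and collect them by their orders in $s$ and $\mu$. The leading family is the product of the two leading pieces,
\begin{align*}
	\sum_{j=0}^{\gamma}\binom{\gamma}{j}
	\alpha^{j}(-s\varphi)^{\alpha}\mu^{\alpha+j}(\partial_{x}\psi)^{\alpha+j}\,
	\beta^{\gamma-j}(-s\varphi)^{\beta}\mu^{\beta+\gamma-j}(\partial_{x}\psi)^{\beta+\gamma-j},
\end{align*}
and here the key point is that all $s$–, $\varphi$–, $\mu$– and $\partial_{x}\psi$–powers combine into the common factor $(-s\varphi)^{\alpha+\beta}\mu^{\alpha+\beta+\gamma}(\partial_{x}\psi)^{\alpha+\beta+\gamma}$, leaving the scalar sum $\sum_{j=0}^{\gamma}\binom{\gamma}{j}\alpha^{j}\beta^{\gamma-j}=(\alpha+\beta)^{\gamma}$ by the binomial theorem. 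This reproduces exactly the first displayed term $(\alpha+\beta)^{\gamma}(-s\varphi)^{\alpha+\beta}\mu^{\alpha+\beta+\gamma}(\partial_{x}\psi)^{\alpha+\beta+\gamma}$.

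It then remains to organize the eight remaining families. The two cross families pairing a leading piece with a middle piece (together with the middle–middle family, which carries an extra factor $\mu^{-1}\le 1$) are all of order $(s\varphi)^{\alpha+\beta}\mu^{\alpha+\beta+\gamma-1}$; absorbing the bounded $\partial_{x}\psi$–powers and the scalar combinatorial weights into a generic $\mathcal{O}(1)$, and reading off the representative coefficient from the binomial reorganization, yields the second displayed term $\gamma(\alpha+\beta)(s\varphi)^{\alpha+\beta}\mu^{\alpha+\beta+\gamma-1}\mathcal{O}(1)$. Every family in which at least one factor is an error piece carries a factor $s^{\alpha-1}$ or $s^{\beta-1}$ and, at fixed $\mu$, all its bounded and $\mu$–power factors collapse into $\mathcal{O}_{\mu}(1)$; hence these combine into the third displayed term $s^{\alpha+\beta-1}\big[\alpha(\alpha-1)+\beta(\beta-1)\big]\mathcal{O}_{\mu}(1)$, the combinatorial coefficient again being the natural one inherited from the error pieces of the preceding Lemma. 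Finally, since $\varphi$ and $\partial_{x}\psi$ are bounded and $\mu$ is treated as fixed, the first term is $s^{\alpha+\beta}\mathcal{O}_{\mu}(1)$, the second is of strictly lower $\mu$–order hence again $s^{\alpha+\beta}\mathcal{O}_{\mu}(1)$, and the third is $s^{\alpha+\beta-1}\mathcal{O}_{\mu}(1)\le s^{\alpha+\beta}\mathcal{O}_{\mu}(1)$ for $s\ge 1$, which gives the asserted second equality $s^{\alpha+\beta}\mathcal{O}_{\mu}(1)$.

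The main obstacle I expect is purely the bookkeeping: tracking the precise $s$– and $\mu$–orders across the nine product families and confirming that each falls into one of the three displayed groups with no stray term of higher order. In particular one must check that the middle–middle contribution and all error–involving contributions are genuinely subordinate (so that they are legitimately absorbed into the stated $\mathcal{O}(1)$ and $\mathcal{O}_{\mu}(1)$ factors, using $\mu\ge 1$ and $s\ge 1$), and that the two binomial reorganizations produce the advertised scalar coefficients $(\alpha+\beta)^{\gamma}$ and $\gamma(\alpha+\beta)$; the degenerate cases $\alpha=0$ or $\beta=0$ require no separate treatment, since then $r\,\partial_{x}^{\alpha}\rho$ is constant and the corresponding error coefficient $\alpha(\alpha-1)$ vanishes.
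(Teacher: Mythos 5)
Your proposal is correct and takes essentially the same approach as the source: the paper states this corollary without proof, importing it from \cite[Corollary 4.4]{Lecaros2021}, where it is obtained exactly as you do, via the factorization $r^{2}(\partial_{x}^{\alpha}\rho)(\partial_{x}^{\beta}\rho)=(r\,\partial_{x}^{\alpha}\rho)(r\,\partial_{x}^{\beta}\rho)$, the Leibniz rule, the single-factor expansion \cref{eqprpr} applied to each factor, and the binomial identity $\sum_{j=0}^{\gamma}\binom{\gamma}{j}\alpha^{j}\beta^{\gamma-j}=(\alpha+\beta)^{\gamma}$ for the leading term. Your bookkeeping of the remaining product families --- in particular noting that the coefficients $\gamma(\alpha+\beta)$ and $\alpha(\alpha-1)+\beta(\beta-1)$ act as vanishing markers, so the combinatorial constants and the subordinate middle--middle and error-involving contributions (using $\mu\geq 1$, $s\geq 1$) are legitimately absorbed into the generic $\mathcal{O}(1)$ and $\mathcal{O}_{\mu}(1)$ factors --- is sound and matches the cited argument.
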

By  \cref{eqWeightFuncitonDefine,eqprpr}, it easy to see that the following result holds.
\begin{lemma}
	For $ \alpha, \beta, \gamma, \sigma\in \mathbb{N} $, we have 
	\begin{align}
		\label{eqpr2pra} \notag
		\partial_{x}^ \sigma(r \partial_x^\alpha \rho \partial_{x}^\beta(r \partial_x^\gamma \rho))
		& = \gamma^\beta
		(\alpha + \gamma)^{\sigma}(- s \varphi)^{\alpha+\gamma} \mu^{\alpha+\beta+\gamma+\sigma} (\partial_{x} \psi)^{\alpha + \beta + \gamma+\sigma}
		+  (s \varphi)^{\alpha + \gamma} \mu^{\alpha + \beta + \gamma+\sigma -1} \mathcal{O}(1)
		\\ \notag
		& \quad 
		+ s^{\alpha + \gamma - 1}  \mathcal{O}_{\mu}(1)
		\\
		& =
		s^{\alpha+\gamma}\mathcal{O}_{\mu}(1)
		.
	\end{align}
\end{lemma}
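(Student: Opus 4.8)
The plan is to obtain \cref{eqpr2pra} directly from the single-factor expansion \cref{eqprpr}: I would expand each of the two factors $r\partial_x^\alpha\rho$ and $\partial_x^\beta(r\partial_x^\gamma\rho)$ separately, multiply the expansions, and then differentiate the resulting product $\sigma$ times in $x$, tracking throughout the joint order in $s$ and $\mu$. Applying \cref{eqprpr} with second index $0$ to the first factor gives
\[
r\partial_x^\alpha\rho = (-s\varphi)^\alpha\mu^\alpha(\partial_x\psi)^\alpha + s^{\alpha-1}\mathcal{O}_\mu(1),
\]
(the middle remainder vanishes since its index is $0$), while \cref{eqprpr} with $(\alpha,\beta)$ replaced by $(\gamma,\beta)$ gives
\[
\partial_x^\beta(r\partial_x^\gamma\rho) = \gamma^\beta(-s\varphi)^\gamma\mu^{\gamma+\beta}(\partial_x\psi)^{\gamma+\beta} + (s\varphi)^\gamma\mu^{\gamma+\beta-1}\mathcal{O}(1) + s^{\gamma-1}\mathcal{O}_\mu(1).
\]
Multiplying and sorting the six resulting products by their $(s,\mu)$-orders, the product of the two leading terms is $\gamma^\beta(-s\varphi)^{\alpha+\gamma}\mu^{\alpha+\beta+\gamma}(\partial_x\psi)^{\alpha+\beta+\gamma}$; the remaining five products either keep the $s$-power $\alpha+\gamma$ while losing exactly one power of $\mu$, contributing to $(s\varphi)^{\alpha+\gamma}\mu^{\alpha+\beta+\gamma-1}\mathcal{O}(1)$, or drop to $s$-power $\alpha+\gamma-1$, contributing to $s^{\alpha+\gamma-1}\mathcal{O}_\mu(1)$. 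This yields the expansion of $r\partial_x^\alpha\rho\,\partial_x^\beta(r\partial_x^\gamma\rho)$ before the outer differentiation.

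Next I would apply $\partial_x^\sigma$, using two facts from \cref{eqWeightFuncitonDefine}: since $s=\lambda\theta(t)$ is independent of $x$, $\partial_x^\sigma$ acts only on the spatial factors; and since $\partial_x\varphi = \mu(\partial_x\psi)\varphi$, each $x$-derivative of a power $\varphi^k$ produces a factor $k\mu\partial_x\psi$ at top order, whereas differentiating any power of $\partial_x\psi$ (or a higher $\psi$-derivative) produces no extra factor of $\mu$. Expanding $\partial_x^\sigma[\varphi^{\alpha+\gamma}(\partial_x\psi)^{\alpha+\beta+\gamma}]$ in the leading term by the Leibniz rule, the unique top-$\mu$ contribution arises when all $\sigma$ derivatives fall on $\varphi^{\alpha+\gamma}$, producing the factor $((\alpha+\gamma)\mu\partial_x\psi)^\sigma$ and hence exactly
\[
\gamma^\beta(\alpha+\gamma)^\sigma(-s\varphi)^{\alpha+\gamma}\mu^{\alpha+\beta+\gamma+\sigma}(\partial_x\psi)^{\alpha+\beta+\gamma+\sigma};
\]
every distribution of the $\sigma$ derivatives in which at least one hits a non-exponential factor loses a power of $\mu$ and is absorbed into $(s\varphi)^{\alpha+\gamma}\mu^{\alpha+\beta+\gamma+\sigma-1}\mathcal{O}(1)$. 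Differentiating the two remainder terms raises their $\mu$-order by at most $\sigma$ and leaves their $s$-orders intact, so they remain of the two advertised types. Collecting everything gives the three-term expansion, and the coarse identity $s^{\alpha+\gamma}\mathcal{O}_\mu(1)$ then follows because, for fixed $\mu$, the functions $\varphi$ and $\partial_x\psi$ are bounded on the compact domain and all $\mu$-powers are constants, so the leading and middle terms are $s^{\alpha+\gamma}\mathcal{O}_\mu(1)$ while the last is of strictly lower order in $s$.

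The one delicate point, and where I would spend the most care, is the $\mu$-counting during the $\sigma$-fold differentiation: I must verify that the maximal power $\mu^{\alpha+\beta+\gamma+\sigma}$ is attained \emph{only} by the all-derivatives-on-$\varphi$ term, and that every other term in the Leibniz expansion genuinely loses at least one factor of $\mu$, so that nothing spuriously inflates the order of the middle remainder. This reduces to the observation that powers of $\mu$ are generated exclusively by differentiating the exponential $\varphi$, which is precisely the structure already recorded in \cref{eqprpr}; consequently the present estimate is a controlled bookkeeping of the same mechanism rather than a genuinely new phenomenon.
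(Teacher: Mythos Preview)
Your proposal is correct and follows exactly the route the paper indicates: the paper itself does not give a detailed argument but only states that the lemma is a direct consequence of \cref{eqWeightFuncitonDefine} and \cref{eqprpr}, and your expansion of the two factors via \cref{eqprpr}, multiplication, and Leibniz-rule differentiation with the $\mu$-bookkeeping based on $\partial_x\varphi=\mu(\partial_x\psi)\varphi$ is precisely how one fills in those details.
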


\begin{lemma}\cite[Lemma 4.8]{Lecaros2021}
	Let $\alpha, j, k,  m, n \in \mathbb{N}$. Provided  $ \lambda h (\delta T)^{-m} \leq 1$, we have
	\begin{align}
		\label{eqdprpr}  
		A_h^j D_h^k \partial_{x}^\alpha(r A_h^m D_h^n \rho)=\partial_x^k \partial_{x}^\alpha(r \partial_x^n \rho)+s^n \mathcal{O}_\mu((s h)^2)=s^n \mathcal{O}_\mu(1)
		.
	\end{align}
\end{lemma}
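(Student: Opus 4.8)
The plan is to derive \eqref{eqdprpr} by applying Proposition~\ref{propADf} twice --- once to expand the inner operator $A_h^m D_h^n$ acting on $\rho$, and once to expand the outer operator $A_h^j D_h^k$ --- and then to absorb every remainder that appears into the error term $s^n\mathcal{O}_\mu((sh)^2)$ using the weighted estimate \eqref{eqprpr}. The organizing principle is that each factor $h^2$ produced by a remainder in Proposition~\ref{propADf} is accompanied by two additional $x$-derivatives falling on the weight, and by \eqref{eqprpr} two extra derivatives on $\rho$ cost exactly two powers of $s$; the product $h^2 s^2=(sh)^2$ is precisely the gain recorded on the right-hand side. Since $\psi\in C^{k}(\overline{\widetilde G})$ with $k$ large, all objects below are smooth enough in $x$ to legitimize both uses of Proposition~\ref{propADf}.

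First I would treat the inner factor. Regarding $\rho(t,\cdot)$ as a smooth function of $x$, Proposition~\ref{propADf} gives
\begin{align*}
A_h^m D_h^n \rho = \partial_x^n\rho + R_{A_h^m}(\partial_x^n\rho) + R_{D_h^n}(\rho) + R_{A_h^m D_h^n}(\rho),
\end{align*}
where each remainder is $h^2$ (respectively $h^4$ for the composed one) times an integral over $\sigma\in[0,1]$ of a derivative $\rho^{(p)}$, with $p\in\{n+2,n+4\}$, evaluated at a shifted point $x+\tfrac{ch}{2}\sigma$ with $|c|\le m+n$. Multiplying by $r$ and differentiating $\alpha$ times, the main term is $\partial_x^\alpha(r\,\partial_x^n\rho)$, the continuous object in \eqref{eqdprpr}. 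For each remainder I would control the shifted argument by writing
\begin{align*}
r(t,x)\,\rho^{(p)}\!\Big(x+\tfrac{ch}{2}\sigma\Big) = \frac{r(t,x)}{r\big(t,x+\tfrac{ch}{2}\sigma\big)}\;\Big(r\,\rho^{(p)}\Big)\!\Big(t,x+\tfrac{ch}{2}\sigma\Big),
\end{align*}
so that the second factor is $s^{p}\mathcal{O}_\mu(1)$ uniformly in the base point by \eqref{eqprpr}, while the first factor equals $e^{s(t)[\phi(x)-\phi(x+ch\sigma/2)]}$. This is exactly where the hypothesis $\lambda h(\delta T)^{-m}\le 1$ is used: because $s=\lambda\theta\le\lambda(\delta T)^{-m}$ one has $sh\le1$, and since $\|\partial_x\phi\|_{L^\infty}=\mathcal{O}_\mu(1)$ from \eqref{eqWeightFuncitonDefine} we get $s\,|\phi(x)-\phi(x+ch\sigma/2)|\le\mathcal{O}_\mu(sh)=\mathcal{O}_\mu(1)$, hence the ratio is $\mathcal{O}_\mu(1)$. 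Thus each remainder contributes $h^2 s^{n+2}\mathcal{O}_\mu(1)=s^n(sh)^2\mathcal{O}_\mu(1)$ (or $h^4 s^{n+4}=s^n(sh)^4$ for the composed term), both contained in $s^n\mathcal{O}_\mu((sh)^2)$ because $sh\le1$.

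Crucially, $s=\lambda\theta(t)$ depends on $t$ only, so $\partial_x s=0$; this is already reflected in \eqref{eqprpr}, whose $s$-order $s^\alpha$ is independent of the number $\beta$ of outer derivatives. Consequently, carrying $\partial_x^\alpha$ through the remainder integrals by Leibniz raises $\rho$-derivative orders and produces $\mathcal{O}_\mu(1)$ factors but no new powers of $s$, so the order bookkeeping survives differentiation and I obtain $\partial_x^\alpha(r\,A_h^m D_h^n\rho)=\partial_x^\alpha(r\,\partial_x^n\rho)+s^n\mathcal{O}_\mu((sh)^2)$. Applying the outer operator to $G\deq\partial_x^\alpha(r\,A_h^m D_h^n\rho)$, a second use of Proposition~\ref{propADf} yields $A_h^j D_h^k G=\partial_x^k G+R_{A_h^j}(\partial_x^k G)+R_{D_h^k}(G)+R_{A_h^j D_h^k}(G)$, whose leading term is $\partial_x^k\partial_x^\alpha(r\,\partial_x^n\rho)+s^n\mathcal{O}_\mu((sh)^2)$ by the previous step, and whose remainders are handled identically --- each $h^2$ pairs with two derivatives of the $s^n$-homogeneous function $G$ to give a further $s^n\mathcal{O}_\mu((sh)^2)$. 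This establishes the first equality in \eqref{eqdprpr}; the second is immediate, since $\partial_x^k\partial_x^\alpha(r\,\partial_x^n\rho)=s^n\mathcal{O}_\mu(1)$ directly from \eqref{eqprpr} and $s^n\mathcal{O}_\mu((sh)^2)$ is in particular $s^n\mathcal{O}_\mu(1)$ as $sh\le1$.

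I expect the main obstacle to be the rigorous control of the shifted arguments in the Taylor remainders: the whole order bookkeeping rests on being able to replace $r(t,x)$ at the base point by $r$ at the nearby points $x+\tfrac{ch}{2}\sigma$ without losing $\mathcal{O}_\mu(1)$ comparability, and it is exactly this comparison that forces the smallness condition $\lambda h(\delta T)^{-m}\le1$ (equivalently $sh\le1$). Once this ratio is controlled, the fact that $s$ is independent of $x$ makes the passage of the outer derivatives through the remainders harmless, so the remaining work is bookkeeping rather than a genuine difficulty.
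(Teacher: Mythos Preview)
The paper does not prove this lemma at all --- it is quoted verbatim from \cite[Lemma~4.8]{Lecaros2021} and carries no proof in the present manuscript. Your sketch is correct and is essentially the argument one finds in the cited source: two applications of Proposition~\ref{propADf} (inner $A_h^mD_h^n$ on $\rho$, outer $A_h^jD_h^k$ on the resulting smooth function), with each Taylor remainder controlled by pairing the factor $h^2$ against the two extra $x$-derivatives of the weight via \eqref{eqprpr}, and with the shifted arguments in the remainder integrals handled by the ratio $r(x)/r(x+ch\sigma/2)=e^{s[\phi(x)-\phi(x+ch\sigma/2)]}=\mathcal O_\mu(1)$ under $sh\le 1$. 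Your identification of this last comparison as the place where the hypothesis $\lambda h(\delta T)^{-m}\le 1$ enters is exactly right, as is the observation that $\partial_x s=0$ keeps the $s$-order stable under the outer differentiations.
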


By \cref{propADf} and \cref{eqdprpr}, it is easy to check that the following estimate holds.
\begin{lemma}
	Let $i, j, k, l, m, n, p, q \in \mathbb{N}$. Provided  $ \lambda h (\delta T)^{-m} \leq 1$, we have
\begin{align}
	\label{eqdprpra} \notag
	A_h^i D_h^j (r A_h^k D_h^l \rho A_h^m D_h^n(r A_{h}^{p} D_{h}^{q}\rho)) & = \partial_{x}^j(r \partial_x^l \rho \partial_{x}^n(r \partial_x^q \rho))+s^{l+q} \mathcal{O}_\mu((s h)^2) \\
	& =s^{l+q} \mathcal{O}_\mu(1).
\end{align}
\end{lemma}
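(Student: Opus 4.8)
The plan is to peel the outer operator $A_h^i D_h^j$ off the product and estimate the two structured factors separately. Write $P \deq r A_h^k D_h^l\rho$ and $Q \deq A_h^m D_h^n(r A_h^p D_h^q\rho)$, so that the quantity to be estimated is $A_h^i D_h^j(PQ)$. First I would apply the discrete Leibniz rules \cref{eqDuv} and \cref{eqAuv} repeatedly: each difference operator acting on a product gives $D_h(PQ)=D_hP\,A_hQ+A_hP\,D_hQ$, while each averaging operator gives $A_h(PQ)=A_hP\,A_hQ+\tfrac{h^2}{4}D_hP\,D_hQ$. Iterating the $j$ difference and $i$ averaging operators, $A_h^i D_h^j(PQ)$ becomes a finite sum of terms of the form $c\,h^{2a}\,(A_h^{\alpha_1}D_h^{\beta_1}P)(A_h^{\alpha_2}D_h^{\beta_2}Q)$, where $a\ge 0$ counts the uses of the $\tfrac{h^2}{4}$-branch of \cref{eqAuv} and $\beta_1+\beta_2=j+2a$.

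For each such term I would invoke \cref{eqdprpr} on the two factors, which is legitimate under the standing hypothesis $\lambda h(\delta T)^{-m}\le 1$. For the first factor, $A_h^{\alpha_1}D_h^{\beta_1}P=A_h^{\alpha_1}D_h^{\beta_1}(r A_h^k D_h^l\rho)=\partial_x^{\beta_1}(r\partial_x^l\rho)+s^l\mathcal{O}_\mu((sh)^2)=s^l\mathcal{O}_\mu(1)$; for the second, after merging the operators, $A_h^{\alpha_2}D_h^{\beta_2}Q=A_h^{\alpha_2+m}D_h^{\beta_2+n}(r A_h^p D_h^q\rho)=\partial_x^{\beta_2+n}(r\partial_x^q\rho)+s^q\mathcal{O}_\mu((sh)^2)=s^q\mathcal{O}_\mu(1)$. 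The decisive structural point — and the step I expect to be the main obstacle to state cleanly — is that by \cref{eqprpr} the $s$-order of each factor is governed only by the inner exponents $l$ and $q$ and is insensitive to the number of outer difference operators $\beta_1,\beta_2$; additional outer derivatives merely generate further $\mu$-factors, which are $\mathcal{O}_\mu(1)$. This is exactly why the operators must be kept attached to $r$ and $\rho$ rather than splitting off a generic error and differentiating it, since a bare $D_h$ would otherwise amplify an $\mathcal{O}((sh)^2)$ error by a factor $h^{-1}$.

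Collecting the leading contributions (those with $a=0$ together with the leading continuous parts of both factors, the averaging operators being harmless by \cref{propADf}) reproduces, via the classical Leibniz rule, $\sum_{\beta_1+\beta_2=j}\binom{j}{\beta_1}\partial_x^{\beta_1}(r\partial_x^l\rho)\,\partial_x^{\beta_2}\big(\partial_x^n(r\partial_x^q\rho)\big)=\partial_x^{j}\big(r\partial_x^l\rho\,\partial_x^n(r\partial_x^q\rho)\big)$, the targeted continuous term. Every remaining term carries either an explicit $h^{2a}$ with $a\ge1$ or one of the $s^{\bullet}\mathcal{O}_\mu((sh)^2)$ errors supplied by \cref{eqdprpr}; since the companion factor is $s^{\bullet}\mathcal{O}_\mu(1)$ and $h^{2a}\le(sh)^2$ whenever $a\ge1$, $h\le1$, $s\ge1$, each such term is of order $s^{l+q}\mathcal{O}_\mu((sh)^2)$. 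This establishes the first equality in \cref{eqdprpra}. The second equality is then immediate from \cref{eqpr2pra}, which already gives $\partial_x^{j}\big(r\partial_x^l\rho\,\partial_x^n(r\partial_x^q\rho)\big)=s^{l+q}\mathcal{O}_\mu(1)$ and absorbs the $s^{l+q}\mathcal{O}_\mu((sh)^2)$ remainder.
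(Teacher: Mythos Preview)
Your proposal is correct. The paper itself offers no proof beyond the single sentence ``By \cref{propADf} and \cref{eqdprpr}, it is easy to check that the following estimate holds,'' so there is nothing to compare against in detail; your argument via the discrete Leibniz rules \cref{eqDuv}--\cref{eqAuv} followed by \cref{eqdprpr} on each factor is a natural and complete way to make those citations precise. You have also correctly isolated the crucial structural point---that the $s$-order of $A_h^{\alpha}D_h^{\beta}(rA_h^{\bullet}D_h^{\bullet}\rho)$ is determined by the inner difference exponent and not by $\beta$---which is exactly what prevents the outer $D_h^j$ from amplifying the $\mathcal{O}_\mu((sh)^2)$ remainders.
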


Now, we state the main result of this section, which is a Carleman
estimate for backward stochastic semi-discrete parabolic equations.
\begin{theorem}
    \label{thmCarlemanEstimate}
    Let the functions $ s $ and $ \phi $ be defined in \cref{eqWeightFuncitonDefine}.
    For all $ m \geq 1 $, there exists a constant $ \mu_{0} \geq  1$ such that for any $ \mu \geq \mu_{0} $, one can find positive constants $ C = C(\mu) $, $ h_{0} $ and $ \varepsilon_{0} $ with $ 0 < \varepsilon_{0} \leq 1  $ and $ \lambda_{0} \geq 1 $
    so that for all $ \lambda \geq \lambda_{0} $, $ 0 < h \leq h_{0} $ and 
    $ \lambda h (\delta T)^{-m} \leq \varepsilon_{0} $, it holds that 
    \begin{align}
        \label{eqDisCarleman} \notag
        & \mathbb{E} \int_{Q} s^{3} e^{2 s \phi} |w| ^{2} d t 
        + \mathbb{E} \int_{Q^{*}} s  e^{2 s \phi} |D_{h} w| ^{2} d t 
        + \mathbb{E} \int_{\mathcal{M}} \lambda^{2} \mu^{3}   e^{2 \mu (6m+1) } e^{2 s \phi}   | w| ^{2} \Big|_{t=0}
        \\ \notag
        &
        \leq C \Big(
			\mathbb{E} \int_{0}^{T}\int_{G_{0} \cap \mathcal{M}} s^{3} e^{2 s \phi} |w|^{2} d t
			+ \mathbb{E} \int_{Q} e^{2 s \phi} |f|^{2} d t
			+ \mathbb{E} \int_{Q} s^{2} e^{2 s \phi} |g|^{2} d t
		\\
		& \quad \quad ~ 
		+ h^{-2} \mathbb{E} \int_{\mathcal{M}} e^{2 s \phi}|w|^2 \Big |_{t=T} \Big),
    \end{align}
    for all $ w \in L^{2}_{\mathbb{F}}(\Omega; C([0,T]; L^{2}_{h}(\mathcal{M}))) $  and $ f,  g \in L^{2}_{\mathbb{F}}(0,T;L^{2}_{h}(\mathcal{M})) $ satisfying 
	\begin{align}
		\label{eqW}
		d w + D_{h}^{2} w d t = f d t + g d W(t) 
	\end{align}
	with  $w=0$ on $\partial \mathcal{M}$.
\end{theorem}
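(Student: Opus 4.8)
The plan is to adapt the Fursikov--Imanuvilov method to the semi-discrete stochastic setting: conjugate the operator by the Carleman weight, split the conjugated spatial operator into self-adjoint and skew-adjoint parts, and extract the dominant positive terms from the cross term by discrete integration by parts. Concretely, I would set $z := r w = e^{s\phi} w$ (so that $w = \rho z$) and, since $r$ is deterministic and $C^2$ in $t$, apply It\^o's formula together with \eqref{eqW} to obtain $\mathrm{d}z = \big(\tfrac{r_t}{r}z - r D_h^2(\rho z) + r f\big)\,\mathrm{d}t + r g\,\mathrm{d}W(t)$. Expanding $r D_h^2(\rho z)$ through the discrete product and averaging rules \eqref{eqDuv}--\eqref{eqA2u} and estimating the emerging coefficients $r A_h^j D_h^k \rho$, $r\partial_x^\alpha\rho$, $r A_h^k D_h^l\rho\, A_h^m D_h^n(rA_h^pD_h^q\rho)$, etc.\ by the weighted-derivative bounds \eqref{eqprpr}, \eqref{eqpr2pr}, \eqref{eqpr2pra}, \eqref{eqdprpr}, \eqref{eqdprpra}, I would recast the equation as $\mathrm{d}z + (\mathcal{P}_1 z + \mathcal{P}_2 z + \mathcal{R}z)\,\mathrm{d}t = r f\,\mathrm{d}t + r g\,\mathrm{d}W(t)$, where $\mathcal{P}_1$ is formally self-adjoint in $L^2_h$, $\mathcal{P}_2$ skew-adjoint, and the remainder $\mathcal{R}z$ collects the lower-order terms (to be absorbed later by taking $\lambda$ large) together with the genuinely discrete corrections, the latter of size $\mathcal{O}_\mu((sh)^2)$ relative to the leading terms and hence negligible once $\lambda h(\delta T)^{-m}\le\varepsilon_0$.

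Next I would carry out the square-completion at the level of It\^o's formula: computing the stochastic differentials of the relevant quadratic expressions in $z$ and $\mathcal{P}_i z$, integrating over $Q$ and taking expectation, the genuine martingale increments drop out, while the second-order It\^o corrections built from $(rg)^2$ feed precisely the weighted term $\mathbb{E}\int_Q s^2 e^{2s\phi}|g|^2$ on the right. This reduces matters to bounding from below the cross term $2\,\mathbb{E}\int_Q \mathcal{P}_1 z\,\mathcal{P}_2 z$, which is the analytic core: integrating by parts in space via \eqref{eqDIgbp}--\eqref{eqAIgbp} and in time, and repeatedly invoking $|\partial_x\psi|\ge C_0$ on $\overline{G\setminus G_2}$ from \eqref{eqPsi}, produces the two dominant positive contributions $\mathbb{E}\int_Q s^3 e^{2s\phi}|z|^2$ and $\mathbb{E}\int_{Q^*} s e^{2s\phi}|D_h z|^2$, together with the time-boundary terms. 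The source $f$ is absorbed into $\|\mathcal{P}_1 z\|^2$ by Young's inequality, yielding $\mathbb{E}\int_Q e^{2s\phi}|f|^2$, and the remainders $\mathcal{R}z$ are swallowed after fixing $\mu$ large and then $\lambda$ large.

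Two families of boundary terms then remain. In space, the traces at $x=0,1$ generated by \eqref{eqDIgbp}--\eqref{eqAIgbp} are rewritten through the trace identity \eqref{tra}; since $w=0$ on $\partial\mathcal{M}$ and $\partial_x\psi(0)>0$, $\partial_x\psi(1)<0$, they carry the favourable sign and may be discarded. In time, the weight \eqref{eqWeightedFunctionTime} is deliberately bounded and non-singular at $t=T$: the contribution at $t=0$, whose magnitude is calibrated by the choice of $\sigma$ in \eqref{eqSigma}, has the good sign and supplies the left-hand initial term $\mathbb{E}\int_{\mathcal{M}}\lambda^2\mu^3 e^{2\mu(6m+1)}e^{2s\phi}|w|^2|_{t=0}$, whereas the contribution at $t=T$ cannot be absorbed and is moved to the right; the discrete second difference acting there forces the factor $h^{-2}$, producing $h^{-2}\mathbb{E}\int_{\mathcal{M}} e^{2s\phi}|w|^2|_{t=T}$. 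Finally, I would localize the interior term $\mathbb{E}\int_Q s^3 e^{2s\phi}|z|^2$ off its degeneracy set $G_2$ into the observation region by a cutoff supported in $G_0$ and the nesting $G_2\subset G_1\subset G_0$, and revert from $z$ to $w$ via $z=e^{s\phi}w$ and the analogous expansions of $A_h^j D_h^k z$, controlled by the same weighted lemmas, to recover \eqref{eqDisCarleman}.

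I expect the main obstacle to be the lower bound on the cross term in the discrete setting. Unlike the continuous case, every discrete integration by parts through \eqref{eqDIgbp}--\eqref{eqAIgbp} spawns averaging operators $A_h$ and explicit $h$-dependent correction terms, so the positivity that is transparent in the continuous Carleman computation must be recovered only modulo a cascade of discrete remainders. Each such remainder has to be identified as $\mathcal{O}_\mu((sh)^2)$ through \eqref{eqdprpr}--\eqref{eqdprpra} and absorbed under the scaling constraint $\lambda h(\delta T)^{-m}\le\varepsilon_0$, while the dependence on $\lambda$, $\mu$ and $h$ is kept fully explicit so that the resulting estimate can be fed into the subsequent Banach fixed point scheme.
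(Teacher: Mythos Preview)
Your plan is correct and follows essentially the same strategy as the paper: set $v=rw$, decompose the conjugated operator into the two pieces $I_1$ (self-adjoint part plus a potential term) and $I_2$ (the $dv$ piece plus the first-order skew part), and compute the cross term $2\,\mathbb{E}\int_Q I_1 I_2$ by discrete integration by parts, tracking the discrete remainders as $\mathcal{O}_\mu((sh)^2)$ via the weighted lemmas, then return to $w$ and absorb the local $|D_h w|^2$ term by a cutoff argument. The only organizational device in the paper you do not mention is the explicit introduction of two auxiliary correctors $\Phi(v)=\tfrac{h^2}{2}D_h[D_h(rD_h^2\rho)A_hv]$ and $\Psi(v)=D_h(rD_h\rho)v$, added and subtracted in the splitting so that several of the discrete cross terms combine cleanly; without this trick the bookkeeping of the twelve $I_{ij}$ terms is somewhat messier, but your scheme of dumping all such terms into $\mathcal{R}z$ and absorbing them under $\lambda h(\delta T)^{-m}\le\varepsilon_0$ is equivalent.
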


\begin{proof}[Proof of \cref{thmCarlemanEstimate}]
	For readability, we have divided the proof into several steps.\\
\emph{Step 1.} 
Letting $ v = r w  $ and recalling $ \rho = r^{-1} $, by \cref{eqDuv}, we have the identity
\begin{align}
    \label{eqrLw}
    r (d w + D_{h}^{2} w d t) = I_{1} d t - \Phi(v) d t - \Psi(v) d t + I_{2},
\end{align}
where
\begin{align}
    \label{eqIij}
    \left\{
        \begin{aligned}
            & I_{1} = r D_{h}^{2} \rho A_{h}^{2} v + r A_{h}^{2} \rho D_{h}^{2} v - \lambda \phi \theta_{t} v + \Phi(v),
            \\
            & I_{2} = d v + 2 r A_{h} D_{h} \rho A_{h} D_{h} v d t + \Psi(v) d  t ,
            \\
            &  \Phi(v) = \frac{h^{2}}{2} D_{h} [D_{h}(r D_{h}^{2} \rho) A_{h} v ],
            \\
            & \Psi(v) = D_{h}( r D_{h} \rho) v.
        \end{aligned}
    \right.
\end{align}
Here,  $\Phi$ and  $\Psi$  are two auxiliary functions to absorb some of the bad terms.
From \cref{eqrLw}, we obtain 
\begin{align*}
    2 rI_{1}(d w + D_{h}^{2} w d t)= 2 I_{1}^{2} d t - 2 I_{1} \Phi(v) d t - 2 I_{1} \Psi(v) d t + 2 I_{1} I_{2}
    ,
\end{align*}
which, combined with Cauchy-Schwarz inequality, and noting \cref{eqW}, implies
\begin{align}
    \label{eqPCarS1e1}
    \mathbb{E} \int_{Q} |rf|^{2} d t 
    \geq 
    2 \mathbb{E} \int_{Q} I_{1} I_{2} 
    - 2 \mathbb{E} \int_{Q} |\Phi(v)|^{2} d t 
    - 2 \mathbb{E} \int_{Q} |\Psi(v)|^{2} d t 
    .
\end{align}
In the following steps, we estimate the right-hand side terms of \cref{eqPCarS1e1}. 
Hereinafter, 
for convenience, we put 
\begin{align}
	\label{eqDefIij}
	\mathbb{E} \int_{Q} I_{1} I_{2} = \sum_{i=1}^{4} \sum_{j=1}^{3} I_{i j},
\end{align}
where $ I_{i j} $ stands for the inner product in $ L^{2}_{\mathbb{F}}(0,T;L_{h}^{2}(\mathcal{M})) $ between the $ i $-th term of $ I_{1} $ and $ j $-th term of $ I_{2} $.

In order to shorten the formulas used in the sequel, we define the following
\begin{align*}
	\mathcal{A}_{1} &  =
	\mathbb{E} \int_{Q} [s^{2} \mathcal{O}_{\mu}(1) + s^{3} \mathcal{O}_{\mu}((sh)^{2}) +  s^{3} \mu^{3} \varphi^{3} \mathcal{O}(1)] |v|^{2} d t
	\\
	& \quad 
	+ \mathbb{E} \int_{Q^{*}} [  \mathcal{O}_{\mu}(1) + s  \mathcal{O}_{\mu}((sh)^{2}) + s \mu \varphi \mathcal{O}(1)] |D_{h} v|^{2} d t
	,
	\\
	\mathcal{A}_{2} & = 
	\mathbb{E} \int_{Q} s^{2} \mathcal{O}_{\mu}(1) |d v|^{2}
	+ \mathbb{E} \int_{ Q^{*}}  \mathcal{O}_{\mu}((s h)^{2}) |D_{h}  (d v)|^{2}
	,
	\\
	\mathcal{A}_{3} & = 
	\mathbb{E} \int_{\mathcal{M}} s^{2} \mathcal{O}_{\mu}(1) |v|^{2} \Big|_{t=T}
	+ \mathbb{E} \int_{\mathcal{M}^{*}}  \mathcal{O} (1) |D_{h} v|^{2} \Big|_{t=T}
    \\
    & \quad 
    + \mathbb{E} \int_{\mathcal{M}} (  s^{2} \mu^{2}  e^{2\mu(6m+1)} \mathcal{O}(1) +  s \mathcal{O}_{\mu}(1) + s^{2} \mathcal{O}_{\mu}((sh)^{2})) |v|^{2} \Big|_{t=0}
	,
	\\
	\mathcal{B} & = 
	\mathbb{E} \int_{\partial Q} [  \mathcal{O}_{\mu}(1) + s  \mathcal{O}_{\mu}((sh)^{2}) ] t_{r} (|D_{h} v|^{2}) d t
	.
\end{align*}

\emph{Step 2.} We compute $ \sum\limits_{i=1}^{4} I_{i1} $.
Combining \cref{eqIij,eqA2u}, letting 
$q^{11} = r D_{h}^{2} \rho$,
we obtain 
\begin{align}
    \label{eqIi1}
    \notag
    2\sum_{i=1}^{4} I_{i 1}  
    & =
    2 \mathbb{E} \int_{Q} (r D_{h}^{2} \rho A_{h}^{2} v + r A_{h}^{2} \rho D_{h}^{2} v - \lambda \phi \theta_{t} v + \Phi(v)) d v 
    \\ \notag
    & = 
    2 \mathbb{E} \int_{Q} \bigg(r D_{h}^{2} \rho v + D_{h}^{2} v + \frac{h^{2}}{2} r D_{h}^{2} \rho D_{h}^{2} v  - \lambda \phi \theta_{t} v + \Phi(v)\bigg) d v 
    \\
    & = 
    2 \mathbb{E} \int_{Q} \bigg(q^{11} v + D_{h}^{2} v  + \frac{h^{2}}{2} q^{11} D_{h}^{2} v   - \lambda \phi \theta_{t} v+ \Phi(v)\bigg) d v 
    .
\end{align}

Thanks to It\^o's formula, we have 
\begin{align}
    \notag 
    \label{eqI11e1}
    2 \mathbb{E} \int_{Q} (q^{11}  - \lambda \phi \theta_{t} ) v d v 
    &=
    \mathbb{E} \int_{\mathcal{M}} (q^{11}   - \lambda \phi \theta_{t}  ) v^{2} \Big|_{t=T}
    - \mathbb{E} \int_{\mathcal{M}} (q^{11}   - \lambda \phi \theta_{t}  ) v^{2} \Big|_{t=0}
    \\
    & \quad 
    - \mathbb{E} \int_{Q} \partial_{t} (q^{11}   - \lambda \phi \theta_{t}  ) v^{2}  d t
    - \mathbb{E} \int_{Q}   (q^{11}   - \lambda \phi \theta_{t}  ) | d v|^{2}
    .
\end{align}
Combining \cref{eqdprpr,eqprpr}, we conclude that
\begin{align}
	\label{eqQ11}
	q^{11} = s^{2} \mu^{2} \varphi^{2} |\partial_{x} \psi|^{2} + s \mathcal{O}_{\mu}(1) + s^{2} \mathcal{O}_{\mu}((sh)^{2})
    .
\end{align}
Using definition  of $\theta$ in \cref{eqWeightedFunctionTime}, we obtain $ \theta_{t}(T) = m (\delta T)^{-m-1} $. Thus, we get from \cref{eqQ11,eqWeightFuncitonDefine} that 
\begin{align}
    \label{eqI11e2}
    \mathbb{E} \int_{\mathcal{M}} (q^{11}   - \lambda \phi \theta_{t}  ) v^{2} \Big|_{t=T} 
    \geq 
    - \mathbb{E} \int_{\mathcal{M}} s^{2} \mathcal{O}_{\mu}(1) |v|^{2} \Big|_{t=T} 
    \geq 
    - \mathcal{A}_{3}.
\end{align}
 Noting $\theta_{t}(0)=-\frac{4 \sigma}{T}$, and  from \cref{eqWeightFuncitonDefine,eqSigma}, a direct computation yields
\begin{align*}
    \lambda \phi \theta_{t}(0) = \frac{4}{T} \lambda^2 \mu^{2} e^{\mu (6 m - 4)} \left(\mu e^{6 \mu(m+1)}-e^{\mu(\psi(x) + 6 m)}\right)\geq c \lambda^{2} \mu^{3} e^{2 \mu (6 m + 1)},
\end{align*}
for all $\mu>1$ and some constant $c>0$ uniform with respect to $T$.
Hence,  using \cref{eqQ11} it can be readily seen that
\begin{align}
    \label{eqI11e3}
    & - \mathbb{E} \int_{\mathcal{M}} (q^{11}   - \lambda \phi \theta_{t}  ) v^{2} \Big|_{t=0} 
 \geq
    c \mathbb{E} \int_{\mathcal{M}}  \lambda^{2} \mu^{3} e^{2 \mu (6 m + 1)} |v(0)|^{2} 
    - \mathcal{A}_{3}
    .
\end{align}

Thanks to \cref{eqWeightFuncitonDefine,eqQ11}, we see that 
\begin{align*}
    \partial_{t} q^{11} = \frac{\theta_{t}}{\theta} (2s^{2} \mu^{2} \varphi^{2} |\partial_{x} \psi|^{2} + s \mathcal{O}_{\mu}(1) + s^{2} \mathcal{O}_{\mu}((sh)^{2})), \quad \partial_{t}(\lambda \phi \theta_{t}) = s \phi \frac{\theta_{tt}}{\theta},
\end{align*}
which implies that 
\begin{align}
	\notag
    \label{eqI11e4a}
    &- \mathbb{E} \int_{Q} \partial_{t} (q^{11}   - \lambda \phi \theta_{t}  ) v^{2}  d t\\
    &=
    - \mathbb{E} \int_{Q}   \frac{\theta_{t}}{\theta}  (2s^{2} \mu^{2} \varphi^{2} |\partial_{x} \psi|^{2} + s \mathcal{O}_{\mu}(1) + s^{2} \mathcal{O}_{\mu}((sh)^{2}))v^{2}  d t
    +\mathbb{E} \int_{Q}   s\phi \frac{\theta_{tt}}{\theta}  v^{2}  d t
    .
\end{align}
Using the definition of $\theta$ in \cref{eqWeightedFunctionTime}, it is not difficult to see that for $t\in [0, \frac{T}{4}]$, $\theta\in [1,2]$, and 
$ |\theta_{tt}|\le C\lambda^2 \mu^4e^{2\mu(6m-4)}.
$
Noting there exists $ \mu_{1} \geq 1 $ such that for all $ \mu \geq \mu_{1} $, it holds that $\mu^2e^{-2\mu}\le 1$.
Thus, by the definitions of $\phi,\varphi$ in \cref{eqWeightFuncitonDefine}, for $t\in [0, \frac{T}{4}]$,  we end up with
$$ s\frac{|\phi\theta_{tt}|}{\theta}\le Cs^3 \mu^5e^{2\mu(6m-4)}e^{6\mu(m+1)}=C s^3 \mu^5e^{\mu(18m-2)}\le Cs^3\mu^3\varphi^3.
$$
Moreover, for $t\in [\frac{T}{4}, T]$, we have $|\theta_{tt}|\le C\theta^3$,  thus it is easy to check that
$$
s\frac{|\phi\theta_{tt}|}{\theta}\le s\mu\theta^2e^{6\mu(m+1)}\le s^3\mu\varphi^3.
$$
Therefore, by  \cref{eqI11e4a}, we conclude that
\begin{align}
	\notag
	\label{eqI11e4}
	&- \mathbb{E} \int_{Q} \partial_{t} (q^{11}   - \lambda \phi \theta_{t}  ) v^{2}  d t\\
	&\ge
	- \mathbb{E} \int_{Q}   \frac{\theta_{t}}{\theta}  (2s^{2} \mu^{2} \varphi^{2} |\partial_{x} \psi|^{2} + s \mathcal{O}_{\mu}(1) + s^{2} \mathcal{O}_{\mu}((sh)^{2}))v^{2}  d t
	- \mathcal{A}_{1}
	.
\end{align}

From \cref{eqWeightedFunctionTime}, it easy to check that $|\theta_{t}|\le \sigma=\lambda\mathcal{O}_{\mu}(1)$ for $t\in [0, \frac{T}{4})$, and $|\theta_{t}|\le C\theta^2$ for $t\in [\frac{T}{4}, T]$. Then 
$ |\lambda \phi \theta_{t} | \leq s ^{2} \mathcal{O}_{\mu}(1)$ for $t\in [0, T]$.  And noting
 \cref{eqQ11}, we deduce that
\begin{align}
    \label{eqI11e5}
    - \mathbb{E} \int_{Q}   (q^{11}   - \lambda \phi \theta_{t}  ) | d v|^{2} 
    \geq 
    - \mathbb{E} \int_{Q}  s ^{2} \mathcal{O}_{\mu}(1)   | d v|^{2}  
    \geq
    - \mathcal{A}_{2}
    .
\end{align}
Combining \cref{eqI11e1,eqI11e2,eqI11e3,eqI11e4,eqI11e5}, for $ \mu \geq \mu_{1}  $ and $ \lambda h (\delta T)^{-m} \leq 1$, we conclude that 
\begin{align}
    \label{eqIi2} \notag
    2 \mathbb{E} \int_{Q} (q^{11}  - \lambda \phi \theta_{t} ) v d v
    &\geq 
    - \mathbb{E} \int_{Q}   \frac{\theta_{t}}{\theta}  (2s^{2} \mu^{2} \varphi^{2} |\partial_{x} \psi|^{2} + s \mathcal{O}_{\mu}(1) + s^{2} \mathcal{O}_{\mu}((sh)^{2}))v^{2}  d t\\
    & \quad +c\mathbb{E} \int_{\mathcal{M}}  \lambda^{2} \mu^{3} e^{2 \mu (6 m + 1)} |v(0)|^{2}- \mathcal{A}_{1}
    - \mathcal{A}_{2} - \mathcal{A}_{3}
    .
\end{align}

From \cref{eqDIgbp} and It\^o's formula, and noting $ v = 0 $ on $ \partial Q $, we obtain 
\begin{align}
    \label{eqIi3} \notag
    2 \mathbb{E} \int_{Q} D_{h}^{2} v d v 
    & = 
    - 2 \mathbb{E} \int_{Q^{*}} D_{h}  v D_{h}(d v ) 
    + 2  \mathbb{E} \int_{\partial Q} d v t_{r} (D_{h} v)  \nu
    \\ \notag
    & = 
    -  \mathbb{E} \int_{\mathcal{M}^{*}} | D_{h}  v(T)|^{2} 
    +  \mathbb{E} \int_{\mathcal{M}^{*}} | D_{h}  v(0)|^{2} 
    +   \mathbb{E} \int_{Q^{*}} | D_{h}(d v ) |^{2}
    \\
    & \geq   \mathbb{E} \int_{\mathcal{M}^{*}} | D_{h}  v(0)|^{2}+
    \mathbb{E} \int_{Q^{*}} | D_{h}(d v ) |^{2} - \mathcal{A}_{3}
    .
\end{align}

Thanks to \cref{eqDIgbp,eqDuv} and $ v = 0 $ on $ \partial Q $, we have 
\begin{align}
    \label{eqI31e1}
    h^{2} \mathbb{E} \int_{Q} q^{11} D_{h}^{2} v d v 
    = 
    - h^{2} \mathbb{E} \int_{Q^{*}} A_{h} q^{11} D_{h} (d v) D_{h} v  
    - h^{2} \mathbb{E} \int_{Q^{*}} D_{h} q^{11} A_{h} (d v) D_{h} v  
    .
\end{align}
From It\^o's formula, we obtain 
\begin{align}
    \notag
    \label{eqI31e2}
    - h^{2} \mathbb{E} \int_{Q^{*}} A_{h} q^{11} D_{h} (d v) D_{h} v  
    & =
    - \frac{h^{2}}{2} \mathbb{E} \int_{Q^{*}} d ( A_{h} q^{11} |D_{h} v|^{2}) 
    + \frac{h^{2}}{2} \mathbb{E} \int_{Q^{*}} \partial_{t} (A_{h} q^{11}) |D_{h} v|^{2}
    \\
    & \quad 
    + \frac{h^{2}}{2} \mathbb{E} \int_{Q^{*}}    A_{h} q^{11}  |D_{h} (d v)|^{2}
    .
\end{align}
By It\^o's formula and \cref{eqDuv}, a direct computation shows that
\begin{align*}
	 A_{h} (d v) D_{h} v=\frac{1}{2}d(D_{h}(v^{2})) 
	- A_{h} v D_{h}(d v)
	-  \frac{1}{2} D_{h} ((d v)^{2})
	,
\end{align*}
which  combined with \cref{eqIij}, \cref{eqDIgbp} and $ t_r(v) = 0 $ on $ \partial Q^* $ implies 
\begin{align}
    \notag
    \label{eqI31e3}
    &
    - h^{2} \mathbb{E} \int_{Q^{*}} D_{h} q^{11} A_{h} (d v) D_{h} v   
    \\ \notag
    & =
    - \frac{h^{2}}{2} \mathbb{E} \int_{Q^{*}} D_{h} q^{11} d (D_{h}(v^{2}))  
    + h^{2}  \mathbb{E} \int_{Q^{*}} D_{h} q^{11}  A_{h} v D_{h} (d v)
    +  \frac{h^{2}}{2}  \mathbb{E} \int_{Q^{*}} D_{h} q^{11}  D_{h}((d v)^{2})   
    \\ \notag
    & =
     \frac{h^{2}}{2} \mathbb{E} \int_{Q} D^{2}_{h} q^{11} d ( v^{2}) - h^{2}  \mathbb{E} \int_{Q} D_h(D_{h} q^{11}  A_{h} v) d v
    +  \frac{h^{2}}{2}  \mathbb{E} \int_{Q^{*}} D_{h} q^{11}  D_{h}((d v)^{2})
    \\
    & = 
     \frac{h^{2}}{2} \mathbb{E} \int_{Q } d ( D^{2}_{h} q^{11}  v^{2} ) 
    - \frac{h^{2}}{2} \mathbb{E} \int_{Q } \partial_{t} ( D_{h}^{2} q^{11})   v^{2}  d t - h^{2}  \mathbb{E} \int_{Q} D_h(D_{h} q^{11}  A_{h} v) d v
    -  \frac{h^{2}}{2}  \mathbb{E} \int_{Q} D_{h}^{2} q^{11}    |d v|^{2}
    .
\end{align}
Combining \cref{eqI31e1,eqI31e2,eqI31e3}, and noting \cref{eqIij}, $q^{11} = r D_{h}^{2} \rho$, it holds that 
\begin{align}
    \notag
    \label{eqI31e4}
    & 2 \mathbb{E} \int_{Q} \bigg(  \frac{h^{2}}{2} q^{11} D_{h}^{2} v   + \Phi(v)\bigg) d v 
    \\ \notag
    & =
    -\frac{h^{2}}{2} \Big[
        \mathbb{E} \int_{Q^{*}} d ( A_{h} q^{11} |D_{h} v|^{2}) 
        -   \mathbb{E} \int_{Q^{*}} \partial_{t} (A_{h} q^{11}) |D_{h} v|^{2}    
        - \mathbb{E} \int_{Q^{*}}    A_{h} q^{11}  |D_{h} (d v)|^{2}
    \\
    & \quad
    - \mathbb{E} \int_{Q } d ( D^{2}_{h} q^{11}   v^{2} ) 
    + \mathbb{E} \int_{Q}  \partial_{t} ( D_{h}^{2} q^{11})   v^{2}  d t 
    +  \mathbb{E} \int_{Q} D_{h}^{2} q^{11}    |d v|^{2}
    \Big]
    .
\end{align}

By $q^{11} = r D_{h}^{2} \rho$,  from \cref{eqdprpr,eqprpr} and $ |D_{h} v|^{2} \leq h^{-2} (|\tau_{+}v|^{2} + |\tau_{-} v|^{2}) $, we get 
\begin{align}
    \label{eqI31e5} \notag
    & 
    -\frac{h^{2}}{2}  \mathbb{E} \int_{Q^{*}} d ( A_{h} q^{11} |D_{h} v|^{2})
    \\ \notag
    & =
    -\frac{h^{2}}{2}  \mathbb{E} \int_{\mathcal{M}^{*}}   A_{h} q^{11} |D_{h} v|^{2} \Big|_{t=T}
    +\frac{h^{2}}{2}  \mathbb{E} \int_{\mathcal{M}^{*}}   A_{h} q^{11} |D_{h} v|^{2} \Big|_{t=0}
    \\ \notag
    & \geq 
    - \mathbb{E} \int_{\mathcal{M}^{*}} \mathcal{O}_{\mu}((s h)^{2}) |D_{h} v|^{2} \Big|_{t=T}
    - \mathbb{E} \int_{\mathcal{M}} (  s^{2} \mu^{2}  e^{2\mu(6m+1)} \mathcal{O}(1) +  s \mathcal{O}_{\mu}(1) + s^{2} \mathcal{O}_{\mu}((sh)^{2})) |v|^{2} \Big|_{t=0}
    \\
    & \geq 
    - \mathcal{A}_{3}
    ,
\end{align}
and 
\begin{align}
    \label{eqI31e6}
    \frac{h^{2}}{2}  \mathbb{E} \int_{Q^{*}} d ( D^{2}_{h} q^{11}   v^{2} ) 
    & \geq 
    - \mathbb{E} \int_{\mathcal{M} } \mathcal{O}_{\mu}((s h)^{2}) |v|^{2} \Big|_{t=T}
    - \mathbb{E} \int_{\mathcal{M}}   \mathcal{O}_{\mu}((sh)^{2})  |v|^{2} \Big|_{t=0}
    \geq 
    - \mathcal{A}_{3}
    .
\end{align}
Thanks to \cref{eqWeightFuncitonDefine,eqdprpr,eqprpr}, noting $q^{11} = r D_{h}^{2} \rho$,  we see that 
\begin{align*}
    \partial_{t} (A_{h} q^{11})= \frac{\theta_{t}}{\theta} s^{2} \mathcal{O}_{\mu}(1), \ \partial_{t} (D_{h}^{2} q^{11})= \frac{\theta_{t}}{\theta} s^{2} \mathcal{O}_{\mu}(1),
\end{align*}
and noting 
$| \theta_{t} | \le \lambda \theta^2 \mathcal{O}_{\mu}(1)$,
which implies that, for $ \lambda h (\delta T)^{-m} \leq 1 $,
\begin{align}
    \label{eqI31e7} \notag
    & 
    -\frac{h^{2}}{2} \Big[ 
        -   \mathbb{E} \int_{Q^{*}} \partial_{t} (A_{h} q^{11}) |D_{h} v|^{2}
        + \mathbb{E} \int_{Q}  \partial_{t} ( D_{h}^{2} q^{11})   v^{2}  d t
    \Big]
    \\ \notag
    & \geq 
    - \mathbb{E} \int_{ Q^{*} }  s  \mathcal{O}_{\mu}((sh)^{2})  |D_{h} v|^{2}  d t
    - \mathbb{E} \int_{Q}   s  \mathcal{O}_{\mu}((sh)^{2})   |v|^{2} d t
    \\
    & \geq 
    -\mathcal{A}_{1}
    .
\end{align}
It follows from \cref{eqprpr}, \cref{eqdprpr} and $q^{11} = r D_{h}^{2} \rho$ that, for $ \lambda h (\delta T)^{-m} \leq 1 $,
\begin{align}
    \label{eqI31e8} \notag
    & 
    -\frac{h^{2}}{2} \Big[
        - \mathbb{E} \int_{Q^{*}}    A_{h} q^{11}  |D_{h} (d v)|^{2}
        + \mathbb{E} \int_{Q} D_{h}^{2} q^{11}    |d v|^{2}
    \Big]
    \\ \notag
    & \geq  
    -\mathbb{E} \int_{Q}  \mathcal{O}_{\mu}((s h)^{2}) |d v|^{2}
	- \mathbb{E} \int_{ Q^{*}}  \mathcal{O}_{\mu}((s h)^{2}) |D_{h}  (d v)|^{2}
    \\
    & \geq 
    - \mathcal{A}_{2}
    .
\end{align}
Combining \cref{eqI31e4,eqI31e5,eqI31e6,eqI31e7,eqI31e8}, we conclude that, for $ \lambda h (\delta T)^{-m} \leq 1 $,
\begin{align}
    \label{eqIi4}
    2 \mathbb{E} \int_{Q} \bigg(  \frac{h^{2}}{2} q^{11} D_{h}^{2} v   + \Phi(v)\bigg) d v 
    \geq - \mathcal{A}_{1}
    - \mathcal{A}_{2}
    - \mathcal{A}_{3}
    .
\end{align}

Thus, from \cref{eqIi1,eqIi2,eqIi3,eqIi4}, for $ \mu \geq \mu_{1} $ and $ \lambda h (\delta T)^{-m} \leq 1 $, we see that
\begin{align}
    \label{eqIi1Final} \notag
    2\sum_{i=1}^{4} I_{i 1} 
    & \geq 
    c \mathbb{E} \int_{\mathcal{M}}  \lambda^{2} \mu^{3} e^{2 \mu (6 m + 1)} |v(0)|^{2}+\mathbb{E}\int_{\mathcal{M}^{*}} | D_{h}  v(0)|^{2}
    + \mathbb{E} \int_{Q^{*}} | D_{h}(d v ) |^{2} - \mathcal{A}_{1}
    - \mathcal{A}_{2} - \mathcal{A}_{3}
    \\
    & \quad 
    - \mathbb{E} \int_{Q}   \frac{\theta_{t}}{\theta}  (2s^{2} \mu^{2} \varphi^{2} |\partial_{x} \psi|^{2} + s \mathcal{O}_{\mu}(1) + s^{2} \mathcal{O}_{\mu}((sh)^{2}))v^{2}  d t
    .
\end{align}

\emph{Step 3.} We compute $ \sum\limits_{i=1}^{4} I_{i2} $.

From \cref{eqIij,eqDu2,eqDIgbp,tra,eqAu2,eqAIgbp,eqdprpra,eqpr2pr}, letting $ q^{12} = 2 r^{2} D_{h}^{2} \rho A_{h} D_{h} \rho $, for $ \lambda h (\delta T)^{-m} \leq 1 $, we arrive at the following estimate 
\begin{align}
    \label{eqI12} \notag
    2I_{12}
    & = 
    \mathbb{E} \int_{Q} q^{12} D_{h}((A_{h} v)^{2}) d t 
    \\ \notag
    & = 
    - \mathbb{E} \int_{Q^{*}} D_{h} q^{12}  (A_{h} v)^{2}  d t 
    + \mathbb{E} \int_{ \partial Q }  q^{12}  t_{r}((A_{h} v)^{2}) \nu d t 
    \\ \notag
    & = 
    - \mathbb{E} \int_{Q^{*}} D_{h} q^{12}   A_{h} (v ^{2})  d t 
    + \frac{h^{2}}{4} \mathbb{E} \int_{Q^{*}} D_{h} q^{12}   |D_{h} v| ^{2}   d t 
    + \frac{h^{2}}{4} \mathbb{E} \int_{ \partial Q }  q^{12}  t_{r}(|D_{h} v|^{2}) \nu d t 
    \\ \notag
    & = 
    - \mathbb{E} \int_{Q} A_{h} D_{h} q^{12}    |v| ^{2}  d t 
    + \frac{h^{2}}{4} \mathbb{E} \int_{Q^{*}} D_{h} q^{12}   |D_{h} v| ^{2}   d t 
    + \frac{h^{2}}{4} \mathbb{E} \int_{ \partial Q }  q^{12}  t_{r}(|D_{h} v|^{2}) \nu d t 
    \\ \notag
    & \geq  
    6 \mathbb{E} \int_{Q} s^{3} \mu^{4} \varphi^{3} |\partial_{x} \psi|^{4}  |v| ^{2}  d t 
    -  \mathbb{E} \int_{Q} [s^{2} \mathcal{O}_{\mu}(1) + s^{3} \mathcal{O}_{\mu}((sh)^{2}) +  s^{3} \mu^{3} \varphi^{3} \mathcal{O}(1)]  |v| ^{2}  d t 
    \\ \notag
    & \quad 
    - \mathbb{E} \int_{Q^{*}}   s  \mathcal{O}_{\mu}((sh)^{2})  |D_{h} v|^{2} d t
    - \mathbb{E} \int_{\partial Q}   s  \mathcal{O}_{\mu}((sh)^{2})   t_{r} (|D_{h} v|^{2}) d t
    \\
    & \geq  
    6 \mathbb{E} \int_{Q} s^{3} \mu^{4} \varphi^{3} |\partial_{x} \psi|^{4}  |v| ^{2}  d t 
    -  \mathcal{A}_{1} - \mathcal{B}
    .
\end{align}

Thanks to \cref{eqDu2,eqDIgbp,eqdprpra,eqprpr}, letting $ q^{22} = 2 r^{2} A_{h}^{2} \rho A_{h} D_{h} \rho $, for $ \lambda h (\delta T)^{-m} \leq 1 $,  a   direct computation gives 
\begin{align}
    \label{eqI22} \notag
    2I_{22}
    & = 
    \mathbb{E} \int_{Q} q^{22} D_{h} ( (D_{h} v)^{2} ) d t 
    \\ \notag
    & = 
    - \mathbb{E} \int_{Q^{*}}  D_{h} q^{22}  |D_{h} v|^{2}  d t 
    + \mathbb{E} \int_{\partial Q}   q^{22} t_{r} ( |D_{h} v|^{2} ) \nu d t 
    \\ \notag
    & \geq  
    2 \mathbb{E} \int_{Q^{*}}  s \mu^{2} \varphi |\partial_{x} \psi|^{2}   |D_{h} v|^{2}  d t 
    - 2 \mathbb{E} \int_{\partial Q} s \mu \varphi  \partial_{x} \psi t_{r} ( |D_{h} v|^{2} ) \nu d t 
    \\ \notag
    & \quad 
    - \mathbb{E} \int_{Q^{*}} [  s  \mathcal{O}_{\mu}((sh)^{2}) + s \mu \varphi \mathcal{O}(1)] |D_{h} v|^{2} d t 
    - \mathbb{E} \int_{\partial Q}  s  \mathcal{O}_{\mu}((sh)^{2})   t_{r} (|D_{h} v|^{2}) d t
    \\
    & \geq  
    2 \mathbb{E} \int_{Q^{*}}  s \mu^{2} \varphi |\partial_{x} \psi|^{2}   |D_{h} v|^{2}  d t 
    - 2 \mathbb{E} \int_{\partial Q} s \mu \varphi  \partial_{x} \psi t_{r} ( |D_{h} v|^{2} ) \nu d t
    - \mathcal{A}_{1} - \mathcal{B}
    .
\end{align}

Combining \cref{eqDIgbp,eqDuv,eqDu2,eqAu2,eqAIgbp}, letting $ q^{32} = 2\lambda \phi \theta_{t} r A_{h} D_{h} \rho $, we see that 
\begin{align*}
    2I_{32} 
    & = 
    -2  \mathbb{E} \int_{Q} q^{32} A_{h} D_{h} v v d t 
    \\
    & = 
    2 \mathbb{E} \int_{Q^{*}} D_{h} q^{32} |A_{h}  v|^{2}   d t 
    + 2  \mathbb{E} \int_{Q^{*}} A_{h} q^{32}  A_{h}  v D_{h} v    d t 
    \\
    & = 
    2  \mathbb{E} \int_{Q^{*}} D_{h} q^{32} |A_{h}  v|^{2}   d t 
    -  \mathbb{E} \int_{Q} D_{h} A_{h} q^{32}   v^{2}    d t 
    \\
    & = 
      \mathbb{E} \int_{Q} A_{h} D_{h}  q^{32} | v|^{2}   d t 
    - \frac{h^{2}}{2} \mathbb{E} \int_{Q^{*}} D_{h}  q^{32}   |D_{h} v|^{2}    d t 
    .
\end{align*}
By \cref{propADf}, \cref{eqprpr}, \cref{eqdprpr}, and noting 
$| \theta_{t} | \le \lambda \theta^2 \mathcal{O}_{\mu}(1)$, we obtain 
\begin{align*}
    A_{h} D_{h} q^{32} 
    & = 
    \partial_{x}(2\lambda \phi \theta_{t} r A_{h} D_{h} \rho) 
    + \frac{\theta_{t}}{\theta} s\mathcal{O}_{\mu}((s h)^{2})
    \\
    & = 
      2\lambda \phi \theta_{t} \partial_{x}( r A_{h} D_{h} \rho )
    +2\partial_{x}(\lambda \phi \theta_{t}) r A_{h} D_{h} \rho 
    + \frac{\theta_{t}}{\theta} s\mathcal{O}_{\mu}((s h)^{2})
    \\
    & = 
    2\lambda \phi \theta_{t} \partial_{x}( r \partial_{x} \rho )
    + \frac{\theta_{t}}{\theta} s^{2} \mathcal{O}_{\mu}((s h)^{2})
    - 2\frac{\theta_{t}}{\theta} s^{2} \mu^{2} \varphi^{2} (\partial_{x} \psi)^{2}
    + \frac{\theta_{t}}{\theta} s\mathcal{O}_{\mu}((s h)^{2})
    \\
    & = 
    2\lambda \phi \theta_{t} \partial_{x}( r \partial_{x} \rho )
    - 2\frac{\theta_{t}}{\theta} s^{2} \mu^{2} \varphi^{2} (\partial_{x} \psi)^{2}
    + \frac{\theta_{t}}{\theta} s^{2} \mathcal{O}_{\mu}((s h)^{2})
    ,
\end{align*}
 and $D_{h} q^{32} =s^3\mathcal{O}_{\mu}(1)$,
which implies that, for $ \lambda h (\delta T)^{-m} \leq 1$,
\begin{align}
    \label{eqI32} 
    2I_{32}
    & \geq 
    2 \mathbb{E} \int_{Q} \bigg[
        \lambda \phi \theta_{t} \partial_{x}( r \partial_{x} \rho )
    - \frac{\theta_{t}}{\theta} s^{2} \mu^{2} \varphi^{2} (\partial_{x} \psi)^{2}
    + \frac{\theta_{t}}{\theta} s^{2} \mathcal{O}_{\mu}((s h)^{2})
    \bigg] v^{2} d t 
    - \mathcal{A}_{1}
    .
\end{align}

From \cref{eqIij,eqDuv,eqDu2,eqDIgbp,tra,eqAu2leq,eqdprpra}, for $ \lambda h (\delta T)^{-m} \leq 1$, we obtain 
\begin{align}
    \label{eqI42}\notag
    2I_{42}
    & = 
    2 h^{2} \mathbb{E} \int_{Q}  D_{h} [D_{h}(r D_{h}^{2} \rho) A_{h} v ]  r A_{h} D_{h} \rho A_{h} D_{h} v d t
    \\ \notag
    & = 
    2 h^{2} \mathbb{E} \int_{Q}  D_{h}^{2}(r D_{h}^{2} \rho)    r A_{h} D_{h} \rho A_{h}^{2} v A_{h} D_{h} v d t
    + 2 h^{2} \mathbb{E} \int_{Q} A_{h} D_{h} (r D_{h}^{2} \rho)    r A_{h} D_{h} \rho   |A_{h} D_{h} v|^{2} d t
    \\ \notag
    & =
    - h^{2} \mathbb{E} \int_{Q^{*}}  D_{h}[D_{h}^{2}(r D_{h}^{2} \rho)    r A_{h} D_{h} \rho] | A_{h} v |^{2}  d t
    + \frac{h^{4}}{4} \mathbb{E} \int_{ \partial Q }   D_{h}^{2}(r D_{h}^{2} \rho)    r A_{h} D_{h} \rho t_{r}( | D_{h} v |^{2} ) \nu d t
    \\ 
    & \quad 
    + 2 h^{2} \mathbb{E} \int_{Q} A_{h} D_{h} (r D_{h}^{2} \rho)    r A_{h} D_{h} \rho   |A_{h} D_{h} v|^{2} d t
    .
\end{align}
From \cref{eqAIgbp,eqAu2leq,eqdprpra}, it is easy to check that the first term of the right hand of  \cref{eqI42} can be bound  by
\begin{align*}
	 - h^{2} \mathbb{E} \int_{Q^{*}}  D_{h}[D_{h}^{2}(r D_{h}^{2} \rho)    r A_{h} D_{h} \rho] | A_{h} v |^{2}  d t
	& \geq 
	- \mathbb{E} \int_{Q^*}  s \mathcal{O}_{\mu}((sh)^{2}) A_h(v^2)  d t
	 \geq 
	-  \mathbb{E} \int_{Q}  s \mathcal{O}_{\mu}((sh)^{2}) v^2 d t 
	.
\end{align*}
Similarity, the second term of the right hand of  \cref{eqI42} can be estimated  by
\begin{align*}
	 2 h^{2} \mathbb{E} \int_{Q} A_{h} D_{h} (r D_{h}^{2} \rho)    r A_{h} D_{h} \rho   |A_{h} D_{h} v|^{2} d t
	& \geq 
	- \mathbb{E} \int_{Q^*}  s \mathcal{O}_{\mu}((sh)^{2}) |D_hv|^2  d t
	.
\end{align*}
By \cref{eqdprpra},  and substituting the above two estimates into \cref{eqI42} yields
\begin{align}
	\label{eqI42a}
	2I_{42}\ge -\mathcal{A}_{1} - \mathcal{B}
	.
\end{align}

Combining \cref{eqI12,eqI22,eqI32,eqI42a}, for $ \lambda h (\delta T)^{-m} \leq 1$, we conclude that 
\begin{align}
    \label{eqIi2Final} \notag
    2\sum_{i=1}^{4} I_{i2} 
    & \geq 
    6 \mathbb{E} \int_{Q} s^{3} \mu^{4} \varphi^{3} |\partial_{x} \psi|^{4}  |v| ^{2}  d t 
    + 2 \mathbb{E} \int_{Q^{*}}  s \mu^{2} \varphi |\partial_{x} \psi|^{2}   |D_{h} v|^{2}  d t 
    - 2 \mathbb{E} \int_{\partial Q} s \mu \varphi  \partial_{x} \psi t_{r} ( |D_{h} v|^{2} ) \nu d t
    \\
    & \quad 
    + 2 \mathbb{E} \int_{Q} \bigg[
        \lambda \phi \theta_{t} \partial_{x}( r \partial_{x} \rho )
        - \frac{\theta_{t}}{\theta} s^{2} \mu^{2} \varphi^{2} |\partial_{x} \psi|^{2}
        + \frac{\theta_{t}}{\theta} s^{2} \mathcal{O}_{\mu}((s h)^{2})
    \bigg] v^{2} d t 
    -  \mathcal{A}_{1} - \mathcal{B}
    .
\end{align} 

\emph{Step 4.} We compute $ \sum\limits_{i=1}^{4} I_{i3} $.
By \cref{eqIij,eqA2u,eqdprpra,eqpr2pra,eqDIgbp,eqDuv,eqDu2}, letting $ q^{13} = r D_{h}^{2} \rho D_{h} ( r D_{h} \rho) $, for $ \lambda h (\delta T)^{-m} \leq 1$, we have 
\begin{align}
    \label{eqI13} \notag
    2I_{13} 
    & = 
    2\mathbb{E} \int_{Q} q^{13} A_{h}^{2} v v d t 
    \\ \notag
    & = 
    2 \mathbb{E} \int_{Q} q^{13} |v|^{2} d t 
    +  \frac{h^{2}}{2}\mathbb{E} \int_{Q} q^{13} D_{h}^{2} v  v d t 
    \\ \notag
    & = 
    2 \mathbb{E} \int_{Q} q^{13} |v|^{2} d t 
    -  \frac{h^{2}}{2}\mathbb{E} \int_{Q^{*}} A_{h} q^{13} |D_{h} v|^{2}   d t 
    +  \frac{h^{2}}{4}\mathbb{E} \int_{Q} D_{h}^{2} q^{13} |v|^{2}   d t 
    \\
    & \geq  
    - 2 \mathbb{E} \int_{Q} s^{3} \mu^{4} \varphi^{3} |\partial_{x} \psi|^{4}  |v| ^{2}  d t 
    - \mathcal{A}_{1}
    .
\end{align}

From \cref{eqIij,eqDIgbp,eqDuv,eqDu2,eqdprpra,eqprpr}, letting $ q^{23} = r A_{h}^{2} \rho D_{h} ( r D_{h} \rho) $, for $ \lambda h (\delta T)^{-m} \leq 1$, we see that 
\begin{align}
    \label{eqI23} \notag
    2I_{23} 
    & = 
    2 \mathbb{E} \int_{Q} q^{23} D_{h}^{2} v v d t 
    \\ \notag
    & = 
    - 2 \mathbb{E} \int_{Q^{*}} A_{h} q^{23} |D_{h} v|^{2}  d t 
    + \mathbb{E} \int_{Q} D_{h}^{2} q^{23} |v|^{2}  d t 
    \\
    & \geq  
    2 \mathbb{E} \int_{Q^{*}}  s \mu^{2} \varphi |\partial_{x} \psi|^{2} |D_{h} v|^{2}  d t 
    - \mathcal{A}_{1}
    .
\end{align}

Thanks to \cref{eqIij,eqdprpr}, we deduce that 
\begin{align}
    \label{eqI33} \notag
    2I_{33}
    & = 
    -2 \mathbb{E} \int_{Q} \lambda \phi \theta_{t} D_{h} (r D_{h} \rho) v^{2} d t 
    \\
    & \geq 
    -2 \mathbb{E} \int_{Q} \lambda \phi \theta_{t} \partial_{x}( r \partial_{x} \rho)  v^{2} d t 
    + \mathbb{E} \int_{Q} \frac{\theta_{t}}{\theta}   s^{2} \mathcal{O}_{\mu}((sh)^{2}) v^{2} d t 
    .
\end{align}

From Cauchy-Schwarz inequality and  \cref{eqIij},  we have 
\begin{align}
    \label{eqI43a} 
    2I_{43}
     = 
    2 \mathbb{E} \int_{Q} \Phi(v) \Psi(v) d t 
    \geq  
    - \mathbb{E} \int_{Q} |\Phi(v)|^{2}  d t 
    - \mathbb{E} \int_{Q} |\Psi(v)|^{2}  d t 
    .
\end{align}
By \cref{eqIij,eqDuv,eqAu2leq,eqAIgbp,eqdprpra}, for $ \lambda h (\delta T)^{-m} \leq 1$, a direct computation shows that
\begin{align}\label{ineqPhi}\notag
	- \mathbb{E} \int_{Q} |\Phi(v)|^{2}  d t 
 &\geq  
	- \mathbb{E} \int_{Q}  \mathcal{O}_{\mu}((sh)^{2})|A^2_hv|^{2} d t 
	-  \mathbb{E} \int_{Q}  \mathcal{O}_{\mu}((sh)^{2})|A_hD_hv|^{2} d t\\
	&\geq- \mathbb{E} \int_{Q}  \mathcal{O}_{\mu}((sh)^{2})|v|^{2} d t 
	-  \mathbb{E} \int_{Q^*}  \mathcal{O}_{\mu}((sh)^{2})|D_hv|^{2} d t
	,
\end{align}
and
\begin{align}\label{ineqPsi}
	&
	- \mathbb{E} \int_{Q} |\Psi(v)|^{2}  d t 
	\geq  
	- \mathbb{E} \int_{Q}  s^2\mathcal{O}_{\mu}(1)|v|^{2} d t 
	.
\end{align}
Substituting \cref{ineqPhi,ineqPsi} into \cref{eqI43a}, it follows that

\begin{align}
	\label{eqI43} 
	2I_{43}
	\geq
	- \mathcal{A}_{1}
	.
\end{align}

Hence, combining \cref{eqI13,eqI23,eqI33,eqI43}, for $ \lambda h (\delta T)^{-m} \leq 1$, we conclude that 
\begin{align}
    \label{eqIi3Final} \notag
    2\sum_{i=1}^{4} I_{i3} 
    & \geq 
    - 2 \mathbb{E} \int_{Q} s^{3} \mu^{4} \varphi^{3} |\partial_{x} \psi|^{4}  |v| ^{2}  d t 
    + 2 \mathbb{E} \int_{Q^{*}}  s \mu^{2} \varphi |\partial_{x} \psi|^{2} |D_{h} v|^{2}  d t
    \\
    & \quad 
    -2 \mathbb{E} \int_{Q} \lambda \phi \theta_{t} \partial_{x}( r \partial_{x} \rho)  v^{2} d t 
    + \mathbb{E} \int_{Q} \frac{\theta_{t}}{\theta}   s^{2} \mathcal{O}_{\mu}((sh)^{2}) v^{2} d t 
    - \mathcal{A}_{1}
    .
\end{align}

\emph{Step 5.} In this step, we summarize the $ I_{ij} $.
Combining \cref{eqIi1Final,eqIi2Final,eqIi3Final,eqDefIij}, for $ \mu \geq \mu_{1} $ and  $ \lambda h (\delta T)^{-m} \leq 1$, we obtain 
\begin{align}
    \label{eqSumIije1} \notag
    2 \mathbb{E} \int_{Q} I_{1} I_{2}
    & \geq 
    \mathbb{E} \int_{\mathcal{M}}  c\lambda^{2} \mu^{3} e^{2 \mu (6 m + 1)} |v(0)|^{2}
     +\mathbb{E}\int_{\mathcal{M}^{*}} | D_{h}  v(0)|^{2}+ \mathbb{E} \int_{Q^{*}} | D_{h}(d v ) |^{2} 
    \\ \notag
    & \quad 
    - \mathbb{E} \int_{Q}   \frac{\theta_{t}}{\theta}  ( 4 s^{2} \mu^{2} \varphi^{2} |\partial_{x} \psi|^{2} + s \mathcal{O}_{\mu}(1) + s^{2} \mathcal{O}_{\mu}((sh)^{2}))v^{2}  d t
    \\ \notag
    & \quad 
    + 4 \mathbb{E} \int_{Q} s^{3} \mu^{4} \varphi^{3} |\partial_{x} \psi|^{4}  |v| ^{2}  d t 
    + 4 \mathbb{E} \int_{Q^{*}}  s \mu^{2} \varphi |\partial_{x} \psi|^{2}   |D_{h} v|^{2}  d t 
    \\
    & \quad 
    - 2 \mathbb{E} \int_{\partial Q} s \mu \varphi  \partial_{x} \psi t_{r} ( |D_{h} v|^{2} ) \nu d t- \mathcal{A}_{1} -\mathcal{A}_{2} - \mathcal{A}_{3} -\mathcal{B}
    .
\end{align}

From \cref{eqWeightedFunctionTime,eqPsi}, noting $ |\theta_{t}| \leq C \theta^2   $ in $ (T/4, T) $, $\theta\in [1,2]$, $ |\theta_{t}| \leq  \lambda\mathcal{O}_{\mu}(1)   $ in $ (0, T/4) $, we have 
\begin{align}
    \label{eqSumIije2}
    & \mathbb{E} \int_{Q}   \frac{\theta_{t}}{\theta}  (  s \mathcal{O}_{\mu}(1) + s^{2} 
    \mathcal{O}_{\mu}((sh)^{2}))v^{2}  d t
     \leq 
    \mathbb{E} \int_{Q}  (  s^2 \mathcal{O}_{\mu}(1) + s^{3} \mathcal{O}_{\mu}((sh)^{2}))v^{2}  d t
     \leq 
     \mathcal{A}_{1}
    .
\end{align}
Further, noting $\theta_{t}<0$ in $ (0, T/4) $, we have
\begin{align}
    \label{eqSumIije3}
    & - 4 \mathbb{E} \int_{Q}   \frac{\theta_{t}}{\theta}     s^{2} \mu^{2} \varphi^{2} |\partial_{x} \psi|^{2} |v|^{2} d t 
     \geq 
    -\mathbb{E} \int_{Q} s^{3}\mu^2\varphi^2 \mathcal{O}(1) |v|^{2} d t \geq -\mathcal{A}_{1}
    .
\end{align}

Hence, from \cref{eqSumIije1,eqSumIije2,eqSumIije3,eqPCarS1e1} and noting \cref{ineqPhi,ineqPsi}, there exists $ \mu_{2} \geq \mu_{1} $ such that for all $ \mu \geq \mu_{2} $, one can find three constants $ 1 \geq \varepsilon_{1} > 0 $, $ h_{1} > 0 $ and $ \lambda_{1} \geq 1 $, so that for all $ \lambda \geq \lambda_{1} $, $ h \leq h_{1} $ and $ \lambda h (\delta T)^{-m} \leq \varepsilon_{1} $, we have 
\begin{align}
    \label{eqStep5} \notag
    &\mathbb{E} \int_{\mathcal{M}}  \lambda^{2} \mu^{3} e^{2 \mu (6 m + 1)} |v(0)|^{2}
    + \mathbb{E} \int_{Q} s^{3} \mu^{4} \varphi^{3}    |v| ^{2}  d t 
    +   \mathbb{E} \int_{Q^{*}}  s \mu^{2} \varphi    |D_{h} v|^{2}  d t
    + \mathbb{E} \int_{Q^{*}} | D_{h}(d v ) |^{2} \\\notag
    &\le
    C \mathbb{E} \int_{Q} | r f|^{2} d t  
    + C \mathbb{E} \int_{0}^{T} \int_{\mathcal{M}\cap G_{2}}  s^{3} \mu^{4} \varphi^{3}   |v| ^{2}  d t 
    + C \mathbb{E} \int_{0}^{T} \int_{\mathcal{M}^{*}\cap G_{2}}  s \mu^{2} \varphi     |D_{h} v|^{2}  d t\\
    &\quad 
    + C(\mu) \mathbb{E} \int_{Q} s^{2} |d v|^{2} 
    + C(\mu) h^{-2} \mathbb{E} \int_{\mathcal{M}} |v(T)|^{2}
    .
\end{align}

\emph{Step 6.} In this step, we return to the variable $ w $.

Noting $ w = \rho v $, combining \cref{eqDuv,eqdprpr,eqpr2pr}, for  $ \lambda h (\delta T)^{-m} \leq  1$, we obtain 
\begin{align}
	\notag
	\label{eqDwe1}
	& \mathbb{E} \int_{Q^{*}} s \mu^{2} \varphi r^{2}  |D_{h} w |^{2} d t 
	 =
	\mathbb{E} \int_{Q^{*}} s \mu^{2} \varphi r^{2}  |D_{h} (\rho v) |^{2} d t 
	\\ \notag
	& \leq 
	2 \mathbb{E} \int_{Q^{*}} s \mu^{2} \varphi r^{2}  ( |D_{h} \rho A_{h} v|^{2} + |A_{h} \rho  D_{h} v|^{2}) d t 
	\\ \notag
	& \leq 
	C \mathbb{E} \int_{Q^{*}} s^{3} \mu^{4} \varphi^{3}  |A_{h} v|^{2} d t 
	+ 2 \mathbb{E} \int_{Q^{*}} s \mu^{2} \varphi  |D_{h} v|^{2} d t 
	+ \mathbb{E} \int_{Q}   s^{3} \mathcal{O}_{\mu}((sh)^{2}) | v|^{2} d t 
	\\
	& \quad 
	+ \mathbb{E} \int_{Q^{*}} s \mathcal{O}_{\mu}((sh)^{2}) | D_{h} v|^{2} d t 
	.
\end{align}
From \cref{eqAu2leq,eqAIgbp,propADf}, we get 
\begin{align}
	\label{eqDwe2}
	\mathbb{E} \int_{Q^{*}} s^{3} \mu^{4} \varphi^{3}  |A_{h} v|^{2} d t 
	& \leq 
	\mathbb{E} \int_{Q} s^{3} \mu^{4} \varphi^{3}  |  v|^{2} d t 
	+ \mathbb{E} \int_{Q} s^{3} \mathcal{O}_{\mu}((sh)^{2}) |  v|^{2} d t 
	.
\end{align}
Combining \cref{eqDwe1,eqDwe2}, for $ \lambda h (\delta T)^{-m} \leq  1$, we have 
\begin{align}
	\label{eqDw}
	\mathbb{E} \int_{Q^{*}} s \mu^{2} \varphi r^{2}  |D_{h} w |^{2} d t 
	\leq 
	C \mathbb{E} \int_{Q} s^{3} \mu^{4} \varphi^{3}  |  v|^{2} d t 
	+ 2 \mathbb{E} \int_{Q^{*}} s \mu^{2} \varphi  |D_{h} v|^{2} d t 
	+ \mathcal{A}_{1}
	.
\end{align}

From \cref{eqDuv,eqdprpr}, for  $ \lambda h (\delta T)^{-m} \leq  1$, we obtain
\begin{align*}
	 r D_{h} w =  r D_{h} \rho A_{h} v  + r A_{h} \rho D_{h} v 
	 = r D_{h} \rho A_{h} v  + D_{h} v + \mathcal{O}_{\mu} ((sh)^{2}) D_{h} v 
	 ,
\end{align*}
which implies
\begin{align}
	\notag
	\label{eqRwLe2}
	& \mathbb{E} \int_{0}^{T} \int_{M^{*} \cap G_{2}} s \mu^{2} \varphi |D_{h} v|^{2} d t 
	\\ \notag
	& \leq 
	C \mathbb{E} \int_{0}^{T} \int_{M^{*} \cap G_{2}} s \mu^{2} \varphi (r D_{h} \rho)^{2}|A_{h} v|^{2} d t 
	+ C  \mathbb{E} \int_{0}^{T} \int_{M^{*} \cap G_{2}} s \mu^{2} \varphi r^{2} |D_{h} w|^{2} d t 
	\\ \notag
	& \quad 
	+ \mathbb{E} \int_{Q^{*}} s \mathcal{O}_{\mu}((sh)^{2}) |D_{h} v|^{2} d t
	\\ \notag
	& \leq 
	C \mathbb{E} \int_{0}^{T} \int_{M  \cap G_{1}} s^{3} \mu^{4} \varphi^{3} r^{2} | w|^{2} d t 
	+ C  \mathbb{E} \int_{0}^{T} \int_{M^{*} \cap G_{1}} s \mu^{2} \varphi r^{2} |D_{h} w|^{2} d t 
	\\ \notag
	& \quad 
	+ \mathbb{E} \int_{Q^{*}} s \mathcal{O}_{\mu}((sh)^{2}) |D_{h} v|^{2} d t
	+ \mathbb{E} \int_{Q } s^{3} \mathcal{O}_{\mu}((sh)^{2}) | v|^{2} d t
	\\
	& \leq 
	C \mathbb{E} \int_{0}^{T} \int_{M  \cap G_{1}} s^{3} \mu^{4} \varphi^{3} r^{2} | w|^{2} d t 
	+ C  \mathbb{E} \int_{0}^{T} \int_{M^{*} \cap G_{1}} s \mu^{2} \varphi r^{2} |D_{h} w|^{2} d t  
	+ \mathcal{A}_{1} 
	, 
\end{align}
where we have used $ |A_{h} v|^{2} \leq C (|\tau_{+}v|^{2} + |\tau_{-} v|^{2}) $.

Combining \cref{eqStep5,eqDw,eqRwLe2}, there exists $ \mu_{3} \geq \mu_{2} $ such that for all $ \mu \geq \mu_{3} $, one can find three positive constants $ \varepsilon_{2} \leq \varepsilon_{1}   $, $h_{2}  \leq  h_{1}   $ and $ \lambda_{2} \geq \lambda_{1} $, so that for all $ \lambda \geq \lambda_{2} $, $ h \leq h_{2} $ and $ \lambda h (\delta T)^{-m} \leq \varepsilon_{2} $, we have 
\begin{align}
    \label{eqStep6} \notag
    &\mathbb{E} \int_{\mathcal{M}}  \lambda^{2} \mu^{3} e^{2 \mu (6 m + 1)} e^{2 s \phi}  |w|^{2} \Big|_{t=0}
    + \mathbb{E} \int_{Q} s^{3} \mu^{4} \varphi^{3}  e^{2 s \phi}   |w| ^{2}  d t 
    +   \mathbb{E} \int_{Q^{*}}  s \mu^{2} \varphi   e^{2 s \phi}   |D_{h} w|^{2}  d t\\\notag
    &\le
    C \mathbb{E} \int_{Q} e^{2 s \phi} |  f|^{2} d t 
    + C \mathbb{E} \int_{0}^{T} \int_{\mathcal{M}\cap G_{1}}  s^{3} \mu^{4} \varphi^{3}  e^{2 s \phi}  |w| ^{2}  d t 
    + C \mathbb{E} \int_{0}^{T} \int_{\mathcal{M}^{*}\cap G_{1}}  s \mu^{2} \varphi  e^{2 s \phi}    |D_{h} w|^{2}  d t \\
    &\quad+ C(\mu) \mathbb{E} \int_{Q} s^{2} e^{2 s \phi}  |g|^{2} 
    + C(\mu) h^{-2} \mathbb{E} \int_{\mathcal{M}} e^{2 s \phi}  |w|^{2} \Big|_{t=T}
    .
\end{align}

\emph{Step 7.} In this step, we estimate the local terms. 

Choose a function $ \xi \in C_{0}^{\infty}(G_{0};[0,1]) $ such that $ \xi =1 $ in $ G_{1} $.
Thanks to It\^o's formula and \cref{eqW}, we have 
\begin{align*}
	d ( s \varphi \xi^{2} e^{2 s \phi} w^{2}) 
	& = 
    2 s \varphi \xi^{2} e^{2 s \phi} w d w 
    + \varphi \xi^{2} \partial_{t} ( s  e^{2 s \phi} ) w^{2} dt 
    + s \varphi \xi^{2} e^{2 s \phi} (d w)^{2}
	.
\end{align*}
Hence, we have 
\begin{align} 
	\label{eqLocalD2we1}
	\notag
	& \mathbb{E} \int_{\mathcal{M} \cap G_{0}} s \varphi \xi^{2} e^{2 s \phi} |w|^{2} \Big|_{t = T}
	- \mathbb{E} \int_{\mathcal{M} \cap G_{0}} s \varphi \xi^{2} e^{2 s \phi} |w|^{2} \Big|_{t = 0}
	\\  \notag
	& =
	\mathbb{E} \int_{0}^{T} \int_{\mathcal{M} \cap G_{0}} \varphi \xi^{2} \partial_{t} ( s  e^{2 s \phi} ) w^{2} dt  
	+ 2 \mathbb{E} \int_{0}^{T} \int_{\mathcal{M} \cap G_{0}} s \varphi \xi^{2} e^{2 s \phi} w (- D_{h}^{2} w + f) d t   
	\\
	& \quad 
	+  \mathbb{E} \int_{0}^{T} \int_{\mathcal{M} \cap G_{0}} s \varphi \xi^{2} e^{2 s \phi} |g|^{2} d t 
	.
\end{align}
From \cref{eqLocalD2we1,eqDIgbp,eqDuv}, a direct computation shows that
\begin{align}
    \label{eqStep7E1} \notag
    & 
    \mathbb{E} \int_{0}^{T} \int_{\mathcal{M} \cap G_{0}} \varphi \xi^{2} \partial_{t} ( s  e^{2 s \phi} ) w^{2} dt  
    + 
    \mathbb{E} \int_{0}^{T} \int_{\mathcal{M}^{*} \cap G_{0}} 2 s A_{h} (\xi^{2} \varphi e^{2 s \phi}) |D_{h} w |^{2} d t 
    \\ \notag
    & =
    - 2\mathbb{E} \int_{0}^{T} \int_{\mathcal{M} \cap G_{0}}s \varphi \xi^{2} e^{2 s \phi} w f d t - \mathbb{E} \int_{0}^{T} \int_{\mathcal{M} \cap G_{0}} s \varphi \xi^{2} e^{2 s \phi} |g|^{2} d t 
    + \mathbb{E} \int_{\mathcal{M} \cap G_{0}} s \varphi \xi^{2} e^{2 s \phi} |w|^{2} \Big|_{t = T}
    \\
    & \quad 
    - \mathbb{E} \int_{\mathcal{M} \cap G_{0}} s \varphi \xi^{2} e^{2 s \phi} |w|^{2} \Big|_{t = 0}-  \mathbb{E} \int_{0}^{T} \int_{\mathcal{M}^{*} \cap G_{0}} 2 s D_{h} (\xi^{2} \varphi e^{2 s \phi}) A_{h} w D_{h} w  d t 
\end{align}

Thanks to \cref{eqWeightFuncitonDefine,eqWeightedFunctionTime}, noting that $ \theta_{t} < 0 $ in $ (0, T/4) $, $ \theta_{t} < C\theta^2 $ in $ (T/4, T) $ and $\phi<0$, there exist $ \mu_{4} \geq 1 $ and $ \lambda_{3} \geq 1 $ such that for all $ \mu \geq \mu_{4} $ and $ \lambda \geq \lambda_{3} $, we have 
\begin{align}
    \label{eqStep7E2a} \notag
    & 
    \mathbb{E} \int_{0}^{T} \int_{\mathcal{M} \cap G_{0}} \varphi \xi^{2} \partial_{t} ( s  e^{2 s \phi} ) |w|^{2} dt  
    \\ \notag
    & =
    \mathbb{E} \int_{0}^{T} \int_{\mathcal{M} \cap G_{0}} \varphi \xi^{2}  ( \lambda \theta_{t} + 2\lambda s \theta_{t} \phi   ) e^{2 s \phi}|w|^{2} dt  
    \\ 
    & \geq 
    - \mathbb{E} \int_{\frac{T}{4}}^{T} \int_{\mathcal{M} \cap G_{0}} [s^{2} \mathcal{O}_{\mu}(1)+ \lambda^{-1} s^{3} \mathcal{O}_{\mu}(1)] e^{2 s \phi} |w|^{2} d t 
    + \mathbb{E} \int_{0}^{\frac{T}{4}} \int_{\mathcal{M} \cap G_{0}} \lambda \theta_{t}\varphi \xi^{2}   e^{2 s \phi}|w|^{2}d t 
    .
\end{align}
Noting $|\theta_{t}|\le C\lambda\mu^2 e^{\mu(6m-4)}\le C\lambda\mu^2 \varphi$, then from \cref{eqStep7E2a} we have
\begin{align}
	\label{eqStep7E2} \notag
	& 
	\mathbb{E} \int_{0}^{T} \int_{\mathcal{M} \cap G_{0}} \varphi \xi^{2} \partial_{t} ( s  e^{2 s \phi} ) |w|^{2} dt \\ \notag
	& \geq 
	- \mathbb{E} \int_{\frac{T}{4}}^{T} \int_{\mathcal{M} \cap G_{0}} [s^{2} \mathcal{O}_{\mu}(1)+ \lambda^{-1} s^{3} \mathcal{O}_{\mu}(1)] e^{2 s \phi} |w|^{2} d t 
	- \mathbb{E} \int_{0}^{\frac{T}{4}} \int_{\mathcal{M} \cap G_{0}} s^{2}\mu^2\varphi^2 \mathcal{O}(1)   e^{2 s \phi}|w|^{2}d t \\
	&\geq - \mathbb{E} \int_{Q} [s^{2} \mathcal{O}_{\mu}(1)+ \lambda^{-1} s^{3} \mathcal{O}_{\mu}(1)] e^{2 s \phi} |w|^{2} d t 
	.
\end{align}    
   
By \cref{propADf}, it holds that 
\begin{align}
    \label{eqStep7E3} \notag
    & 
    \mathbb{E} \int_{0}^{T} \int_{\mathcal{M}^{*} \cap G_{0}} 2 s A_{h} (\xi^{2} \varphi e^{2 s \phi}) |D_{h} w |^{2} d t 
    \\
    & \geq 
    2 \mathbb{E} \int_{0}^{T} \int_{\mathcal{M}^{*} \cap G_{0}}  s \varphi \xi^{2} e^{2 s \phi}  |D_{h} w |^{2} d t 
    - C \mathbb{E} \int_{0}^{T} \int_{\mathcal{M}^{*} \cap G_{0}}  s \mathcal{O}_{\mu}((s h)^{2}) e^{2 s \phi}  |D_{h} w |^{2} d t 
    .
\end{align}

Thanks to Cauchy-Schwarz inequality, we have 
\begin{align}
    \label{eqStep7E4}  
    - 2\mathbb{E} \int_{0}^{T} \int_{\mathcal{M} \cap G_{0}}s \varphi \xi^{2} e^{2 s \phi} w f d t 
    \leq 
    C \mathbb{E} \int_{0}^{T} \int_{\mathcal{M} \cap G_{0}} s^{2} \mu^{2} \varphi^{2} e^{2 s \phi} |w|^{2} d t 
    + C \mathbb{E} \int_{Q} \mu^{-2} e^{2 s \phi} |f|^{2} d t 
    .
\end{align}
For $ \lambda h (\delta T)^{-m} \leq 1$, we obtain 
\begin{align}
    \label{eqStep7E5}  
    \mathbb{E} \int_{\mathcal{M} \cap G_{0}} s \varphi \xi^{2} e^{2 s \phi} |w|^{2} \Big|_{t = T}
    \leq 
    C(\mu) h^{-1} \mathbb{E} \int_{\mathcal{M} \cap G_{0}}  e^{2 s \phi} |w|^{2} \Big|_{t = T}
    .
\end{align} 
By \cref{eqDu2,eqDIgbp,propADf}, we get 
\begin{align}
    \label{eqStep7E6} \notag 
    &
    -  \mathbb{E} \int_{0}^{T} \int_{\mathcal{M}^{*} \cap G_{0}} 2 s D_{h} (\xi^{2} \varphi e^{2 s \phi}) A_{h} w D_{h} w  d t 
    \\ \notag
    & =
    \mathbb{E} \int_{0}^{T} \int_{\mathcal{M}\cap G_{0}}  s D_{h}^{2} (\xi^{2} \varphi e^{2 s \phi})   |w|^{2}  d t 
    \\
    & \leq 
    C \mathbb{E} \int_{0}^{T} \int_{\mathcal{M}\cap G_{0}}  s^{3} \mu^{2} \varphi^{3}  e^{2 s \phi}    |w|^{2}  d t 
    + C \mathbb{E}  \int_{0}^{T} \int_{\mathcal{M}\cap G_{0}} s^{3} \mathcal{O}_{\mu}((s h)^{2}) e^{2 s \phi} |w|^{2} d t
    .
\end{align}

Hence, combining \cref{eqStep7E1,eqStep7E2,eqStep7E3,eqStep7E4,eqStep7E5,eqStep7E6}, there exists $ \mu_{5} \geq \max\{ \mu_{3}, \mu_{4} \} $ such that for all $ \mu \geq \mu_{5} $, one can find three positive constants $ \varepsilon_{3} \leq \varepsilon_{2}   $, $h_{3}  \leq  h_{2}   $ and $ \lambda_{4} \geq \max\{ \lambda_{2}, \lambda_{3} \} $, so that for all $ \lambda \geq \lambda_{4} $, $ h \leq h_{3} $ and $ \lambda h (\delta T)^{-m} \leq \varepsilon_{3} $, we have 
\begin{align*}
    & 
     \mathbb{E} \int_{0}^{T} \int_{\mathcal{M}^{*} \cap G_{1}}  s \mu^{2} \varphi  e^{2 s \phi}  |D_{h} w |^{2} d t 
    \\
    & \leq 
    C \mathbb{E} \int_{0}^{T} \int_{\mathcal{M}\cap G_{0}}  s^{3} \mu^{4} \varphi^{3}  e^{2 s \phi}    |w|^{2}  d t 
    + C(\mu) h^{-1} \mathbb{E} \int_{\mathcal{M} \cap G_{0}}  e^{2 s \phi} |w|^{2} \Big|_{t = T}
    + C \mathbb{E} \int_{Q}  e^{2 s \phi} |f|^{2} d t\\
    &\quad+\mathbb{E} \int_{Q} [s^{2} \mathcal{O}_{\mu}(1)+ \lambda^{-1} s^{3} \mathcal{O}_{\mu}(1)] e^{2 s \phi} |w|^{2} d t 
    .
\end{align*}
Combining it with \cref{eqStep6}, we complete the proof.
\end{proof}

\section{The controllability for  linear stochastic semi-discrete parabolic equations }

Consider the following linear forward stochastic semi-discrete parabolic equation, which is the special case for \cref{eqDiscreteWithoutDiffuse}:
\begin{align}
    \label{eqDiscreteLinear}
    \begin{cases}
        d y - D_{h}^{2} y  d t  = (F + \chi_{G_{0}} u ) d t + v d W(t) & \text { in } Q,\\ 
        y(t,0) = y(t,1) = 0 & \text { on } (0,T), \\ 
        y(0, x) = y_{0} & \text { in } \mathcal{M}
        .
    \end{cases}
\end{align}

Recalling \cref{eqWeightFuncitonDefine}, we fix parameters $ \lambda, \mu $ and $ h $ such that the inequality \cref{eqDisCarleman} holds true.
We define the space
\begin{align}
    \label{eqSlambdamu}
    \mathfrak{S}_{\lambda, \mu,h} = \Big\{  
        F \in L^{2}_{\mathbb{F}}(0,T; L^{2}_{h}(\mathcal{M})) \mid 
        \mathbb{E} \int_{Q} s^{-3} e^{- 2 s \phi} |F|^{2} d t< \infty
    \Big\}
    ,
\end{align}
endowed with the norm 
\begin{align}
    \label{eqSlambdamuNorm}
    | F |_{\mathfrak{S}_{\lambda, \mu,h}}^{2} = \mathbb{E} \int_{Q} s^{-3} e^{- 2 s \phi} |F|^{2} d t
    .
\end{align}
Here, although the space $\mathfrak{S}_{\lambda, \mu,h}$ does not explicitly contain $h$, both $\lambda$ and $\mu$ have a certain matching relationship with $h$, making it related to $h$ as well.

We have the following null controllability for \cref{eqDiscreteLinear}.
\begin{theorem}
    For all $ \mu, \lambda  $ and $ h $ such that \cref{eqDisCarleman} holds, there exists a positive constant    $ C=C(\mu) $
    such that  for any   $ y_{0} \in L^{2}_{\mathcal{F}_{0}}(\Omega;  L^{2}_{h} (\mathcal{M})) $ and any source term $ F \in \mathfrak{S}_{\lambda, \mu,h} $,
    there exist $ (u, v) \in  L^{2}_{\mathbb{F}}(0,T;   L^{2}_{h}(G_{0} \cap \mathcal{M})) \times L^{2}_{\mathbb{F}}(0,T; L^{2}_{h}( \mathcal{M}))$, so that the solution to \cref{eqDiscreteLinear} satisfies 
    \begin{align}
        \label{eqLinearYT}
        \mathbb{E} \int_{\mathcal{M}} |y(T)|^{2} 
        \leq 
        C \mathcal{E}_{\lambda,\mu,h}  \Big(
            \mathbb{E} \int_{\mathcal{M}} s^{-2} e^{- 2 s \phi} |y|^{2} \Big|_{t = 0} 
            + \mathbb{E} \int_{Q} s^{-3} e^{- 2 s \phi} |F|^{2} d t 
        \Big) 
        ,
    \end{align}
    where 
    \begin{align}
        \label{eqEpsilon0}
        \mathcal{E}_{\lambda,\mu,h} = C h^{-2} e^{- \lambda \theta(T) \mu e^{6 \mu (m+1)}}
        .
    \end{align}
    Moreover, the following estimate holds:
    \begin{align}
        \label{eqLinearYUV} \notag
        & 
        \mathbb{E} \int_{Q} e^{- 2 s \phi} |y|^{2} dt 
        + \mathbb{E} \int_{0}^{T} \int_{\mathcal{M} \cap G_{0}} s^{-3} e^{- 2 s \phi} |u|^{2} dt 
        + \mathbb{E} \int_{Q} s^{-2} e^{- 2 s \phi} |v|^{2} dt 
        \\
        & \leq 
        C \Big(
            \mathbb{E} \int_{\mathcal{M}} s^{-2} e^{- 2 s \phi} |y|^{2} \Big|_{t = 0}
            +  \mathbb{E} \int_{Q} s^{-3} e^{- 2 s \phi} |F|^{2} d t 
        \Big)
        .
    \end{align} 
\end{theorem}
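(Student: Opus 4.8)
The strategy is a duality argument combined with the penalised Hilbert Uniqueness Method of \cite{Boyer2013}, in which the Carleman estimate \cref{thmCarlemanEstimate} plays the role of an observability inequality, and the bad term $h^{-2}\mathbb{E}\int_{\mathcal{M}}e^{2s\phi}|w|^2|_{t=T}$ on the right of \cref{eqDisCarleman} both forces the penalisation and produces the factor $\mathcal{E}_{\lambda,\mu,h}$. First I would introduce the adjoint backward system: for terminal data $w_T\in L^2_{\mathcal{F}_T}(\Omega;L^2_h(\mathcal{M}))$, let $(w,g)$ be the unique solution of $dw+D_h^2w\,dt=g\,dW(t)$ in $Q$ with $w=0$ on $\partial\mathcal{M}$ and $w(T)=w_T$; this is \cref{eqW} with $f=0$, so \cref{eqDisCarleman} applies. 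Applying It\^o's formula to $d(yw)$ and the discrete integration by parts \cref{eqDIgbp} (so the terms containing $D_h^2$ cancel since $y=w=0$ on $\partial\mathcal{M}$) gives the duality identity
\[
\mathbb{E}\int_{\mathcal{M}}y(T)w_T=\mathbb{E}\int_{\mathcal{M}}y_0\,w(0)+\mathbb{E}\int_Q Fw\,dt+\mathbb{E}\int_0^T\!\!\int_{G_0\cap\mathcal{M}}u\,w\,dt+\mathbb{E}\int_Q v\,g\,dt
\]
for any controls $(u,v)$ and the corresponding state $y$.

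Then I would minimise over $w_T$ the functional
\[
J(w_T)=\frac12\mathbb{E}\int_0^T\!\!\int_{G_0\cap\mathcal{M}}s^3e^{2s\phi}|w|^2dt+\frac12\mathbb{E}\int_Q s^2e^{2s\phi}|g|^2dt+\frac{C}{2}h^{-2}\mathbb{E}\int_{\mathcal{M}}e^{2s\phi}|w_T|^2+\mathbb{E}\int_{\mathcal{M}}y_0\,w(0)+\mathbb{E}\int_Q Fw\,dt,
\]
where $C$ is the constant in \cref{eqDisCarleman}. Writing $\mathcal{Q}(w_T)$ for the quadratic part and $D^2=\mathbb{E}\int_{\mathcal{M}}s^{-2}e^{-2s\phi}|y_0|^2|_{t=0}+|F|^2_{\mathfrak{S}_{\lambda,\mu,h}}$, I bound the two linear terms by Cauchy--Schwarz against the weights $s^{-2}e^{-2s\phi}$ and $s^{-3}e^{-2s\phi}$; since at $t=0$ one has $s^2e^{2s\phi}\le C\lambda^2\mu^3e^{2\mu(6m+1)}e^{2s\phi}$, the resulting factors $\mathbb{E}\int_{\mathcal{M}}s^2e^{2s\phi}|w(0)|^2$ and $\mathbb{E}\int_Q s^3e^{2s\phi}|w|^2$ are absorbed by the left-hand side of \cref{eqDisCarleman}, yielding $J(w_T)\ge\frac14\mathcal{Q}(w_T)-CD^2$. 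As the terminal term makes $\mathcal{Q}$ equivalent to the $L^2_{\mathcal{F}_T}(\Omega;L^2_h(\mathcal{M}))$-norm for fixed $\lambda,\mu,h$, the functional is continuous, strictly convex and coercive, so the direct method produces a unique minimiser $\hat w_T$ with associated pair $(\hat w,\hat g)$.

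From the Euler--Lagrange equation at $\hat w_T$ I read off the controls $u=-\chi_{G_0}s^3e^{2s\phi}\hat w$ and $v=-s^2e^{2s\phi}\hat g$. Substituting these into the duality identity and invoking optimality gives $\mathbb{E}\int_{\mathcal{M}}y(T)w_T=Ch^{-2}\mathbb{E}\int_{\mathcal{M}}e^{2s\phi}\hat w_T\,w_T$ for every $w_T$, hence $y(T)=Ch^{-2}e^{2s\phi}|_{t=T}\hat w_T$ almost surely. The inequality $J(\hat w_T)\le J(0)=0$ yields $\mathcal{Q}(\hat w_T)\le CD^2$, so that $\mathbb{E}\int_{\mathcal{M}}e^{2s\phi}|\hat w_T|^2\le Ch^2D^2$; combining this with the pointwise bound $e^{2s\phi}|_{t=T}\le\max_x e^{2\lambda\theta(T)\phi(x)}\le e^{-\lambda\theta(T)\mu e^{6\mu(m+1)}}$ (valid for $\mu$ large, since $\mu e^{6\mu(m+1)}\ge 2e^{\mu(6m+1)}$) produces \cref{eqLinearYT} with $\mathcal{E}_{\lambda,\mu,h}$ as in \cref{eqEpsilon0}. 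The same bound $\mathcal{Q}(\hat w_T)\le CD^2$ immediately controls the two control-cost terms of \cref{eqLinearYUV}, because by construction $s^{-3}e^{-2s\phi}|u|^2=s^3e^{2s\phi}|\hat w|^2$ and $s^{-2}e^{-2s\phi}|v|^2=s^2e^{2s\phi}|\hat g|^2$.

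The main obstacle will be the weighted state bound $\mathbb{E}\int_Q e^{-2s\phi}|y|^2$ in \cref{eqLinearYUV}, which the observability estimate does not furnish. I expect to obtain it from an independent weighted energy estimate for the forward equation \cref{eqDiscreteLinear} driven by the constructed controls, by applying It\^o's formula to $e^{-2s\phi}|y|^2$ (recall $\rho=e^{-s\phi}$), using the discrete calculus of Section 2, and absorbing the commutator terms arising from $D_h^2(\rho^2)$ and $\partial_t(\rho^2)$ into the already-controlled norms of $u$, $v$, $F$ and $y_0$; the semi-discrete weight estimates \cref{eqprpr,eqdprpr} are what keep these commutators of the correct order in $s$ and $h$. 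Additional care is required to track the explicit dependence of the constants on $\mu,\lambda,h$ so that the exponent in $\mathcal{E}_{\lambda,\mu,h}$ comes out as stated, to justify the It\^o product rule in the semi-discrete stochastic setting, and to ensure well-posedness of the backward equation defining $(w,g)$.
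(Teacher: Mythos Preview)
Your overall strategy—duality via a penalised HUM functional, with the Carleman estimate \cref{eqDisCarleman} playing the role of observability and its terminal term producing the factor $\mathcal{E}_{\lambda,\mu,h}$—is exactly the paper's strategy. The difference lies in \emph{what} is minimised and, as a consequence, how the weighted state bound in \cref{eqLinearYUV} is obtained.

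The paper minimises over control pairs $(u,v)$ rather than over terminal data $w_T$, and crucially it \emph{includes the weighted state norm} $\tfrac12\,\mathbb{E}\int_Q e^{-2s\phi}|y|^2\,dt$ in the cost functional. The optimality system then yields an adjoint which is \emph{not} \cref{eqW} with $f=0$; instead it carries the source $f=-e^{-2s\phi}y_\varepsilon$. When \cref{eqDisCarleman} is applied to this adjoint, the term $\mathbb{E}\int_Q e^{2s\phi}|f|^2=\mathbb{E}\int_Q e^{-2s\phi}|y_\varepsilon|^2$ appears on the right and is absorbed by the identical quantity sitting on the left of the duality identity. Thus the state, control and terminal estimates close simultaneously, with no separate forward argument.

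By taking $f=0$ in the adjoint, you lose this mechanism and must recover $\mathbb{E}\int_Q e^{-2s\phi}|y|^2$ by the forward weighted energy estimate you sketch. That is where a genuine difficulty hides: a direct It\^o computation on $e^{-2s\phi}|y|^2$ produces cross terms like $2e^{-2s\phi}yF$ and $2e^{-2s\phi}y\chi_{G_0}u$, which after Cauchy--Schwarz give $e^{-2s\phi}|F|^2$ and $e^{-2s\phi}|u|^2$ on the right—three powers of $s$ too many compared with the $s^{-3}e^{-2s\phi}$ weights you actually control. Recovering the missing $s^{-3}$ from the commutators with $D_h^2$ and $\partial_t$ would essentially amount to a forward Carleman estimate, which the paper does not prove and which is not a routine energy computation. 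The simple fix is to add the state term to the cost functional from the outset; that is precisely what the paper does.
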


\begin{proof}

We apply the penalized Hilbert Uniqueness Method.

\emph{Step 1.} 
For any $ \varepsilon > 0 $, we introduce the functional
\begin{align}
    \label{eqLinearE1} \notag
    J_{\varepsilon} (u ,v) 
    & = 
    \frac{1}{2} \left(\mathbb{E} \int_{Q} e^{- 2 s \phi} |y|^{2} d t 
    +  \mathbb{E} \int_{0}^{T} \int_{\mathcal{M} \cap G_{0}} s^{-3} e^{- 2 s \phi} |u|^{2} dt 
    +  \mathbb{E} \int_{Q} s^{-2} e^{- 2 s \phi} |v|^{2} dt\right)
    \\
    & \quad 
    + \frac{1}{\varepsilon} \mathbb{E} \int_{\mathcal{M}} |y(T)|^{2}
\end{align}
and the optimal control problem:
\begin{align}
    \label{eqLinearE2}
    \left\{
        \begin{aligned}
           & \min\limits_{(u, v) \in \mathcal{H} } J_{\varepsilon}(u,v) \\
         &\text{subject to equation \cref{eqDiscreteLinear}}, 
        \end{aligned}
    \right.
\end{align}
where 
\begin{align*}
    \mathcal{H} = \Big\{ 
        (u, v) \in L^{2}_{\mathbb{F}}(0,T;   L^{2}_{h}(G_{0} \cap \mathcal{M})) \times L^{2}_{\mathbb{F}}(0,T; L^{2}_{h}( \mathcal{M})) 
        ~ \mid ~ 
        & \mathbb{E} \int_{0}^{T} \int_{\mathcal{M} \cap G_{0}} s^{-3} e^{- 2 s \phi} |u|^{2} dt < \infty,
        \\
        & \mathbb{E} \int_{Q} s^{-2} e^{- 2 s \phi} |v|^{2} dt < \infty
    \Big\}.
\end{align*}

It is easily seen that for any $ \varepsilon > 0 $, the problem \cref{eqLinearE2} admits a unique optimal solution $ (u_{\varepsilon}, v_{\varepsilon}) $.
By the standard variational method (see \cite[Chapter 3, Section 2]{Lions}), the pair $ (u_{\varepsilon}, v_{\varepsilon}) $ can be characterized as 
\begin{align}
    \label{eqLinearE3}
    u_{\varepsilon} = - \chi_{G_{0}} s^{3} e^{2 s \phi} z_{\varepsilon}, \quad \quad 
    v_{\varepsilon} = - s^{2} e^{2 s \phi} Z_{\varepsilon} 
    \quad \text{ in } Q, \ \mathbb{P}\text{-a.s.,}
\end{align}
where $ (z_{\varepsilon}, Z_{\varepsilon}) $ solves 
\begin{align}
    \label{eqLinearE5}
    \left\{
        \begin{aligned}
            & d z_{\varepsilon} + D_{h}^{2} z_{\varepsilon} d t = - e^{- 2 s \phi} y_{\varepsilon} d t + Z_{\varepsilon} d W(t) && \text { in } Q,\\ 
            & z_{\varepsilon}(t,0) = z_{\varepsilon}(t,1) = 0 && \text { on } (0,T), \\ 
            & z_{\varepsilon}(T, x) = \dfrac{1}{\varepsilon} y_{\varepsilon}(T, x) && \text { in } \mathcal{M},
        \end{aligned}
    \right.
\end{align}
and where $ y_{\varepsilon} $ is the solution of \cref{eqDiscreteLinear} with the controls $ (u, v) = (u_{\varepsilon}, v_{\varepsilon}) $.

\emph{Step 2.}
Thanks to It\^o's formula, \cref{eqLinearE5,eqDiscreteLinear}, we deduce 
\begin{align}
    \label{eqLinearE4} \notag
    \mathbb{E} \int_{\mathcal{M}} y_{\varepsilon}(T) z_{\varepsilon}(T) 
    & = 
    \mathbb{E} \int_{\mathcal{M}} y_{\varepsilon}(0) z_{\varepsilon}(0) 
    + \mathbb{E} \int_{Q} y_{\varepsilon} (- D_{h}^{2} z_{\varepsilon} - e^{- 2 s \phi} y_{\varepsilon}) dt
    \\
    & \quad 
    + \mathbb{E} \int_{Q} z_{\varepsilon} (D_{h}^{2} y_{\varepsilon} + F + \chi_{G_{0}} u_{\varepsilon}) dt
    + \mathbb{E} \int_{Q} v_{\varepsilon} Z_{\varepsilon} dt
    .
\end{align}
Combining \cref{eqLinearE4,eqLinearE3,eqDIgbp}, we have 
\begin{align}
    \label{eqLinearE6} \notag
    & \mathbb{E} \int_{Q}   e^{- 2 s \phi} |y_{\varepsilon}|^{2} dt
    + \mathbb{E} \int_{0}^{T} \int_{\mathcal{M} \cap G_{0}} s^{3} e^{2 s \phi} |z_{\varepsilon}|^{2} d t 
    + \mathbb{E} \int_{Q}   s^{2} e^{2 s \phi} |Z_{\varepsilon}|^{2} d t 
    + \frac{1}{\varepsilon} \mathbb{E} \int_{\mathcal{M}}    |y_{\varepsilon}(T)|^{2} 
    \\
    & = 
    \mathbb{E} \int_{\mathcal{M}} y_{0} z_{\varepsilon}(0) 
    + \mathbb{E} \int_{Q} z_{\varepsilon} F d t 
    .
\end{align}
Applying Carleman estimate \cref{thmCarlemanEstimate} with $ w = z_{\varepsilon}$, $ f = - e^{-2 s \phi} y_{\varepsilon} $ and $ g = Z_{\varepsilon} $, we obtain 
\begin{align}
    \label{eqLinearE7} \notag
    & 
    \mathbb{E} \int_{\mathcal{M}} s^{2}(0) e^{2 s (0) \phi} |z_{\varepsilon}(0)|^{2} 
    + \mathbb{E} \int_{Q} s^{3} e^{2 s \phi } |z_{\varepsilon}|^{2} d t 
    \\ \notag
    & \leq C \Big(
        \mathbb{E} \int_{0}^{T}\int_{G_{0} \cap \mathcal{M}} s^{3} e^{2 s \phi} |z_{\varepsilon}|^{2} d t
        + \mathbb{E} \int_{Q} e^{- 2 s \phi} |y_{\varepsilon}|^{2} d t
        + \mathbb{E} \int_{Q} s^{2} e^{2 s \phi} |Z_{\varepsilon}|^{2} d t
    \\
    & \quad \quad ~ 
    + h^{-2} \varepsilon^{-2} \mathbb{E} \int_{\mathcal{M}} e^{2 s(T) \phi}|y_{\varepsilon} (T)|^2  \Big).
\end{align}

From Cauchy-Schwarz inequality,  for any $ \kappa > 0 $, we have 
\begin{align}
    \label{eqLinearE8} \notag
    &
    \mathbb{E} \int_{\mathcal{M}} y_{0} z_{\varepsilon}(0) 
    + \mathbb{E} \int_{Q} z_{\varepsilon} F d t 
    \\ \notag
    & \leq 
    \kappa \Big(
        \mathbb{E} \int_{\mathcal{M}} s^{2}(0)  e^{2 s (0) \phi} |z_{\varepsilon}(0)|^{2} 
    + \mathbb{E} \int_{Q} s^{3} e^{2 s \phi } |z_{\varepsilon}|^{2} d t 
    \Big)
    \\
    & \quad 
    + C(\kappa) \Big(
        \mathbb{E} \int_{\mathcal{M}} s^{-2}(0)  e^{- 2 s (0) \phi} |y_{0}|^{2} 
        + \mathbb{E} \int_{Q} s^{- 3} e^{- 2 s \phi } |F|^{2} d t 
    \Big)
    .
\end{align}
Letting $ \kappa > 0 $ be small enough, from \cref{eqLinearE8,eqLinearE7,eqLinearE6}, we conclude that 
\begin{align*}
    & \mathbb{E} \int_{Q}   e^{- 2 s \phi} |y_{\varepsilon}|^{2} dt
    + \mathbb{E} \int_{0}^{T} \int_{\mathcal{M} \cap G_{0}} s^{3} e^{2 s \phi} |z_{\varepsilon}|^{2} d t 
    + \mathbb{E} \int_{Q}   s^{2} e^{2 s \phi} |Z_{\varepsilon}|^{2} d t 
    + \frac{1}{\varepsilon} \mathbb{E} \int_{\mathcal{M}}    |y_{\varepsilon}(T)|^{2} 
    \\
    & \le 
    C \Big(
        \mathbb{E} \int_{\mathcal{M}} s^{-2}(0)  e^{- 2 s (0) \phi} |y_{0}|^{2} 
        + \mathbb{E} \int_{Q} s^{- 3} e^{- 2 s \phi } |F|^{2} d t 
    \Big)
    + C h^{-2} \varepsilon^{-2} \mathbb{E} \int_{\mathcal{M}} e^{2 s(T) \phi}|y_{\varepsilon} (T)|^2 
    .
\end{align*}
By the above inequality and \cref{eqLinearE3}, it holds that
\begin{align}
    \label{eqLinearE9} \notag
    & \mathbb{E} \int_{Q}   e^{- 2 s \phi} |y_{\varepsilon}|^{2} dt
    + \mathbb{E} \int_{0}^{T} \int_{\mathcal{M} \cap G_{0}} s^{- 3} e^{- 2 s \phi} |u_{\varepsilon}|^{2} d t 
    + \mathbb{E} \int_{Q}   s^{- 2} e^{- 2 s \phi} |v_{\varepsilon}|^{2} d t 
    + \frac{1}{\varepsilon} \mathbb{E} \int_{\mathcal{M}}    |y_{\varepsilon}(T)|^{2} 
    \\
    & \le
    C \Big(
        \mathbb{E} \int_{\mathcal{M}}  s^{-2}(0) e^{- 2 s (0) \phi} |y_{0}|^{2} 
        + \mathbb{E} \int_{Q} s^{- 3} e^{- 2 s \phi } |F|^{2} d t 
    \Big)
    + C h^{-2} \varepsilon^{-2} \mathbb{E} \int_{\mathcal{M}} e^{2 s(T) \phi}|y_{\varepsilon} (T)|^2 
    .
\end{align}

From \cref{eqWeightFuncitonDefine,eqEpsilon0}, for $ \mu > 2 $, we have 
\begin{align*}
    e^{2 s(T) \phi} \leq e^{ - \lambda \theta(T) \mu e^{6 \mu (m+1)} },
\end{align*}
which yields that, for $ \varepsilon \geq \mathcal{E}_{\lambda, \mu, h} $,
\begin{align}
    \label{eqLinearE10} \notag
    C h^{-2} \varepsilon^{-2} \mathbb{E} \int_{\mathcal{M}} e^{2 s(T) \phi}|y_{\varepsilon} (T)|^2 
    & \leq 
    C h^{-2} \varepsilon^{-2}  e^{ - \lambda \theta(T) \mu e^{6 \mu (m+1)} } \mathbb{E} \int_{\mathcal{M}} |y_{\varepsilon} (T)|^2 
    \\
    & \leq  
    \frac{1}{2 \varepsilon} \mathbb{E} \int_{\mathcal{M}} |y_{\varepsilon} (T)|^2 
    .
\end{align}
By \cref{eqLinearE9,eqLinearE10}, for $ \varepsilon \geq \mathcal{E}_{\lambda, \mu, h} $, we see that 
\begin{align}
    \label{eqLinearE11} \notag
    & \mathbb{E} \int_{Q}   e^{- 2 s \phi} |y_{\varepsilon}|^{2} dt
    + \mathbb{E} \int_{0}^{T} \int_{\mathcal{M} \cap G_{0}} s^{- 3} e^{- 2 s \phi} |u_{\varepsilon}|^{2} d t 
    + \mathbb{E} \int_{Q}   s^{- 2} e^{- 2 s \phi} |v_{\varepsilon}|^{2} d t 
    + \frac{1}{\varepsilon} \mathbb{E} \int_{\mathcal{M}}    |y_{\varepsilon}(T)|^{2} 
    \\
    & \leq 
    C  \Big(
         \mathbb{E} \int_{\mathcal{M}} s^{-2}(0) e^{- 2 s (0) \phi} |y_{0}|^{2} 
        + \mathbb{E} \int_{Q} s^{- 3} e^{- 2 s \phi } |F|^{2} d t 
    \Big)
    .
\end{align}

\emph{Step 3.} Let $ \varepsilon_{k} = \mathcal{E}_{\lambda, \mu, h} + \frac{1}{k} $, where $ k \in \mathbb{N}^{+} $. 
Since the right-hand side of \cref{eqLinearE11} is uniform with respect to $ \varepsilon $, there exists $ (\hat{u}, \hat{v}, \hat{y}, \hat{y}_{T}) $ such that 
\begin{align}
    \label{eqLinearE13}
    \begin{cases}
        u_{\varepsilon_{k}}   \rightharpoonup  \hat{u}  & \text{ weakly in  } L^{2}_{\mathbb{F}}(0,T; L^{2}_{h}( G_{0}\cap \mathcal{M})),
        \\
        v_{\varepsilon_{k}}   \rightharpoonup  \hat{v}  & \text{ weakly in  } L^{2}_{\mathbb{F}}(0,T; L^{2}_{h}( \mathcal{M})),
        \\
        y_{\varepsilon_{k}}   \rightharpoonup  \hat{y}  & \text{ weakly in  } L^{2}_{\mathbb{F}}(0,T; L^{2}_{h}( \mathcal{M})),
        \\
        y_{\varepsilon_{k}}(T)   \rightharpoonup  \hat{y}_{T}  & \text{ weakly in  } L^{2}_{\mathcal{F}_{T}}(\Omega; L^{2}_{h}( \mathcal{M})).
    \end{cases}
\end{align}

We claim that $ \hat{y} $ is the solution to \cref{eqDiscreteLinear} associated to $ (\hat{u}, \hat{v}) $ and $ \hat{y}(T) = \hat{y}_{T} $.
In fact, let $ \tilde{y} $ be the solution to \cref{eqDiscreteLinear} with controls $ (\hat{u}, \hat{v}) $. 
For any $ p \in L^{2}_{\mathbb{F}}(0,T; L^{2}_{h}( \mathcal{M})) $ and $ q \in L^{2}_{\mathcal{F}_{T}}(\Omega; L^{2}_{h}( \mathcal{M})) $, consider $ (z, Z) $ solves 
\begin{align}
    \label{eqLinearE12}
    \begin{cases}
        d z + D_{h}^{2} z  d t  = p d t + Z d W(t) & \text { in } Q,\\ 
        z(t,0) = z(t,1) = 0 & \text { on } (0,T), \\ 
        z(T, x) = q(x) & \text { in } \mathcal{M}
        .
    \end{cases}
\end{align}
Thanks to It\^o's formula and  \cref{eqDiscreteLinear}, we obtain 
\begin{align*}
    \mathbb{E} \int_{\mathcal{M}} y_{\varepsilon_{k}}(T) q 
    - \mathbb{E} \int_{\mathcal{M}} y_{0} z(0)
    =
     \mathbb{E} \int_{Q} p y_{\varepsilon_{k}} d t 
    + \mathbb{E} \int_{Q} F z d t 
    + \mathbb{E} \int_{0}^{T} \int_{\mathcal{M} \cap G_{0}} u_{\varepsilon_{k}} z d t 
    + \mathbb{E} \int_{Q} v_{\varepsilon_{k}} Z d t 
    ,
\end{align*}
and
\begin{align*}
    \mathbb{E} \int_{\mathcal{M}} \tilde{y}(T) q 
    - \mathbb{E} \int_{\mathcal{M}} y_{0} z(0)
    =
     \mathbb{E} \int_{Q} p \tilde{y} d t 
    + \mathbb{E} \int_{Q} F z d t 
    + \mathbb{E} \int_{0}^{T} \int_{\mathcal{M} \cap G_{0}} \hat{u} z d t 
    + \mathbb{E} \int_{Q} \hat{v} Z d t 
    .
\end{align*}
From this and \cref{eqLinearE13}, we get $ \hat{y} = \tilde{y} $ and $  \tilde{y}(T) = \hat{y}_{T} $ $ \mathbb{P} $-a.s.
By the weak convergence \cref{eqLinearE13}, Fatou's lemma and \cref{eqLinearE11}, we deduce \cref{eqLinearYT,eqLinearYUV}.
\end{proof}

\section{Proof of the semilinear result}

\begin{proof}[Proof of \cref{eqNullControllabilityForGeneralEquation}]

Let us consider $ f $ fulfilling assumptions (A1)-(A3). 
Recalling \cref{eqSlambdamu},  for all $ \mu, \lambda  $ and $ h $ such that \cref{eqDisCarleman} holds, define the nonlinear map
\begin{align*}
    \mathcal{N}: F \in \mathfrak{S}_{\lambda, \mu,h} \mapsto f(\hat{y}) \in \mathfrak{S}_{\lambda, \mu,h}
    ,
\end{align*}
where $ \hat{y} $ solves \cref{eqDiscreteLinear} with the initial datum $ y_{0} $ and the source term $ F $.

We fix $ \mu $ and $ m=1 $.
We can show that $ \mathcal{N} $ is well defined.
In face,  noting $\theta\geq1$, by \cref{eqSlambdamuNorm,eqWeightedFunctionTime,eqWeightFuncitonDefine,eqLinearYUV}, we have 
\begin{align*}
    | \mathcal{N}(F)|_{\mathfrak{S}_{\lambda, \mu,h}}^{2} 
    &  = 
    \mathbb{E} \int_{Q} s^{-3} e^{- 2 s \phi} |f(\hat{y})|^{2} d t
    \\
    & \leq 
    \lambda^{-3} L^{2} \mathbb{E} \int_{Q}  e^{- 2 s \phi} |\hat{y}|^{2} d t
    \\
    & \leq 
    C  \lambda^{-3} L^{2} \Big(
        \mathbb{E} \int_{\mathcal{M}} s^{-2} e^{- 2 s \phi} |y|^{2} \Big|_{t = 0}
        +  \mathbb{E} \int_{Q} s^{-3} e^{- 2 s \phi} |F|^{2} d t 
    \Big)
    \\
    & <
    \infty
    .
\end{align*}
This shows that $ \mathcal{N} $ is well defined.

Next, we choose $ \lambda$  such that $ \mathcal{N} $ is strictly contractive.
Consider two source terms $ F_{1}, F_{2} \in \mathfrak{S}_{\lambda,\mu,h} $. 
We denote the solutions of the corresponding equations by $ \hat{y}_{1} $ and $ \hat{y}_{2} $, respectively.
From \cref{eqSlambdamuNorm,eqWeightFuncitonDefine,eqLinearYUV}, we have
\begin{align*}
    | \mathcal{N}(F_{1}) - \mathcal{N}(F_{2})|_{\mathfrak{S}_{\lambda, \mu,h}}^{2} 
    & =
    \mathbb{E} \int_{Q} s^{-3} e^{- 2 s \phi} | f(\hat{y}_{1}) - f(\hat{y}_{2})|^{2} d t 
    \\
    & \leq
    L^{2} \lambda^{-3} \mathbb{E} \int_{Q}   e^{- 2 s \phi} | \hat{y}_{1} - \hat{y}_{2}|^{2} d t 
    \\
    & \leq
    C L^{2} \lambda^{-3} |  F_{1}  -  F_{2} |_{\mathfrak{S}_{\lambda, \mu ,h}}^{2}
    . 
\end{align*}
Choosing $ \lambda $ sufficiently large such that 
\begin{align}
    \label{eqL}
    C L^{2} \lambda^{-3} < 1,
\end{align}
the map $ \mathcal{N} $ is strictly contractive.

By the Banach fixed point theorem, we conclude that $ \mathcal{N} $ has a unique fixed point $ F $ in $ {\mathfrak{S}}_{\lambda,\mu,h}$.
Let $ y $ be the solution associated to this $ F $ with the control $ (u ,v) $. Then, we have $ F = f(y) $.

From \cref{eqLinearYUV}, it follows that 
\begin{align*}
    & 
    \mathbb{E} \int_{Q} e^{- 2 s \phi} |y|^{2} dt 
    + \mathbb{E} \int_{0}^{T} \int_{\mathcal{M} \cap G_{0}} s^{-3} e^{- 2 s \phi} |u|^{2} dt 
    + \mathbb{E} \int_{Q} s^{-2} e^{- 2 s \phi} |v|^{2} dt 
    \\
    & \leq 
    C   \mathbb{E} \int_{\mathcal{M}} s^{-2} e^{- 2 s \phi} |y|^{2} \Big|_{t = 0}
    + C  L^{2} \lambda^{-3} \mathbb{E} \int_{Q}   e^{- 2 s \phi} |y|^{2} d t 
    .
\end{align*}
By the above inequality and \cref{eqL}, we get 
\begin{align}
    \label{eqNonE1} \notag
    & 
    \mathbb{E} \int_{Q} e^{- 2 s \phi} |y|^{2} dt 
    + \mathbb{E} \int_{0}^{T} \int_{\mathcal{M} \cap G_{0}} s^{-3} e^{- 2 s \phi} |u|^{2} dt 
    + \mathbb{E} \int_{Q} s^{-2} e^{- 2 s \phi} |v|^{2} dt 
    \\
    & \leq 
    C   \mathbb{E} \int_{\mathcal{M}} s^{-2} e^{- 2 s \phi} |y|^{2} \Big|_{t = 0}
    ,
\end{align}
which implies  that \cref{eqEstimateOfSolutionAndControl} holds true for $ h \leq h_{0} $ and $ \lambda h (\delta T)^{-1} \leq \varepsilon_{0} $.

From \cref{eqLinearYT,eqNonE1,eqEpsilon0}, we obtain 
\begin{align*}
    \mathbb{E} \int_{\mathcal{M}} |y(T)|^{2} 
    & \leq 
    C \mathcal{E}_{\lambda,\mu,h}  \Big(
        \mathbb{E} \int_{\mathcal{M}} s^{-2} e^{- 2 s \phi} |y|^{2} \Big|_{t = 0}
        + \mathbb{E} \int_{Q} s^{-3} e^{- 2 s \phi} |F|^{2} d t 
    \Big) 
    \\
    & \leq 
    C \mathcal{E}_{\lambda,\mu,h}  \Big(
        \mathbb{E} \int_{\mathcal{M}} s^{-2} e^{- 2 s \phi} |y|^{2} \Big|_{t = 0}
        + L^{2} \mathbb{E} \int_{Q} e^{- 2 s \phi} |y|^{2} d t 
    \Big) 
    \\
    & \leq 
    C (1+L^{2})\mathcal{E}_{\lambda,\mu,h}  
    \mathbb{E} \int_{\mathcal{M}} s^{-2} e^{- 2 s \phi} |y|^{2} \Big|_{t = 0}
    \\
    & \leq 
    C (1+L^{2}) h^{-2} e^{C \lambda (2 \theta(0)-\theta(T)) }  
    \mathbb{E} \int_{\mathcal{M}}    |y_{0}|^{2}  
    .
\end{align*}
Choose $ \delta_{0} > 0$ such that for all $ \delta < \delta_{0} $, it holds that
\begin{align*}
    2\theta(0)-\theta(T) = 4 - (\delta T)^{-1} \leq - \frac{1}{2} (\delta T)^{-1}
    .
\end{align*}
Note that we need $ h \leq h_{0} $ and $ \lambda h (\delta T)^{-1} \leq \varepsilon_{0} $, where $ \lambda $ is fixed. 
Let $ h_{1} = \frac{\varepsilon_{0} \delta_{0} T}{\lambda} $.
For any $ h \leq \min \{h_{0}, h_{1}\} $, we choose $ \delta = \frac{h}{ h_{1} } \delta_{0} \leq \delta_{0} $.
Then, we obtain $ \lambda h (\delta T)^{-1} = \varepsilon_{0} $. 
By the above argument, we get 
\begin{align*}
    \mathbb{E} \int_{\mathcal{M}} |y(T)|^{2} 
    \leq C e^{-\frac{C}{h}} \mathbb{E} \int_{\mathcal{M}}    |y_{0}|^{2}  
    .
\end{align*}
\end{proof}

\appendix

\end{document}